\def\reg{r}
\def\A{{\mathcal{A}}}
\newcommand{\rank}{\mathsf{rank}}
\renewcommand{\S}{\mathcal{S}}
\newcommand{\Q}{\mathcal{Q}}
\newcommand{\x}{\textbf{x}}
\renewcommand{\b}{\textbf{b}}
\newcommand{\R}{\mathbb{R}}
\newcommand{\y}{\textbf{y}}
\newcommand{\z}{\textbf{z}}
\renewcommand{\v}{\textbf{v}}
\renewcommand{\u}{\textbf{u}}
\renewcommand{\t}{\textbf{t}}
\newcommand{\p}{\textbf{p}}
\newcommand{\q}{\textbf{q}}
\newcommand{\fro}[1]{\left\| #1\right\|^2}
\newcommand{\prox}{\mathsf{prox}}
\def\sign{{\text{sign}}}
\def\bfsigma{{\bm \sigma}}
\DeclareMathOperator*{\argmin}{arg\,min}
\newtheorem{theorem}{Theorem}
\newtheorem{lemma}{Lemma}
\newtheorem{proposition}{Proposition}
\ifcvprfinal\pagestyle{empty}\fi
\begin{document}

\title{Bias Reduction in Compressed Sensing}

\def\ww{5mm}
\author{Carl Olsson$^{1,2}$ \hspace{\ww} Marcus Carlsson$^2$ \hspace{\ww} Daniele Gerosa$^2$\\[0.2 cm]
	\begin{minipage}[c]{0.4\textwidth}
		\centering
		${}^1$Department of Electrical Engineering\\
		Chalmers University of Technology 
	\end{minipage}
	\begin{minipage}[c]{0.4\textwidth}
		\centering
		${}^2$Centre for Mathematical Sciences\\
		Lund University 
	\end{minipage}
	\\[0.4cm]
{\tt\small caols@chalmers.se \hspace{\ww} mc@maths.lth.se \hspace{\ww} daniele.gerosa@math.lu.se}
}

\maketitle

\begin{abstract}
	Sparsity and rank functions are important ways of regularizing under-determined linear systems. Optimization of the resulting formulations is made difficult since both these penalties are non-convex and discontinuous. The most common remedy is to instead use the $\ell^1$- and nuclear-norms. While these are convex and can therefore be reliably optimized they suffer from a shrinking bias that degrades the solution quality in the presence of noise. 
	
	In this paper we combine recently developed bias free non-convex alternatives with the nuclear- and $\ell^1-$penalties. This reduces bias and still enables reliable optimization properties.
	We develop an efficient minimization scheme using derived proximal operators and evaluate the method on several real and synthetic computer vision applications with promising results. 
\end{abstract}

\section{Introduction and Background}

Sparsity and rank penalties are common tools for regularizing ill posed linear problems. The sparsity regularized problem is often formulated as
\begin{equation}
\min_{\x} \mu \|\x\|_0 + \|A\x-\b\|^2,
\label{eq:sparsity}
\end{equation}
where $\|\x\|_0$ is the number of non-zero elements of $\x$.
Optimization of \eqref{eq:sparsity} is difficult since the term $\|\x\|_0$ is non-convex and discontinuous. Moreover, the sought solutions should be sparse and therefore the minimizer is typically in the vicinity of discontinuities. A common practice is to replace $\|\x\|_0$ with the $\ell^1$-norm, resulting in the convex relaxation
\begin{equation}
\min_{\x} \lambda \|\x\|_1 + \|A\x-\b\|^2.
\label{eq:1normrelax}
\end{equation}
The seminal works \cite{tropp-2015,tropp-2006,candes2006stable,candes-tao-2006,donoho-elad-2002} gave performance guarantees for this approach.
The notion of Restricted Isometry Property (RIP), which was introduced in \cite{candes2006stable,candes-tao-2006}, states that $A$ obeys a RIP if
\begin{equation}
(1-\delta_K)\|\x\|^2 \leq \|A \x\|^2 \leq (1+\delta_K)\|\x\|^2,
\label{eq:vectorRIP}
\end{equation}
for all vectors $\x$ with $\|\x\|_0 \leq K$.
In \cite{candes2006stable}, Cand\`{e}s, Romberg and Tao proved the surprising result that, if $A$ fulfills $\delta_{3K}+3\delta_{4K} < 2$, given a sparse vector $\x_0$ and a measurement  $\b=A\x_0+\epsilon$ where $\epsilon$ is Gaussian noise, solving \eqref{eq:1normrelax} yields a vector $\hat \x$ that satisfies
\begin{equation}\label{crt}
\|\hat \x-\x_0\|<C_K\|\epsilon\|,
\end{equation}
where $C_K$ is a constant. While \cite{candes2006stable} argued that it is impossible to beat a linear noise dependence, it has been observed that \eqref{eq:1normrelax} suffers from a shrinking bias  \cite{fan2001variable,mazumder2011sparsenet}. Since the $\ell^1$ term not only has the (desired) effect of forcing many entries in $\x$ to 0, but also the (undesired) effect of diminishing the size of the non-zero entries. This has led to a large amount of non-convex alternatives to replace the $\ell^1$-penalty, see e.g. \cite{bredies2015minimization,blumensath2008iterative,pan2015relaxed,zou2008one,wang2014optimal,loh2013regularized,fan2014strong,zhang2012general,loh2017support,candes2008enhancing,breheny2011coordinate}. Typically these come without global convergence guarantees.

In \cite{carlsson2016convexification,carlsson2018unbiased,olsson-etal-iccv-2017} a non-convex alternative that provides optimality guarantees is studied. These papers propose to replace the term $\mu \|\x\|_1$ with $\Q_2(\mu \|\cdot\|_0)(\x)$, where $\Q_\gamma(f)$ is the so called quadratic envelope of $f$, which is defined so that
$\Q_\gamma(f)+\frac{\gamma}{2}\|\x\|^2$ is the convex envelope of $f(\x)+ \frac{\gamma}{2}\|\x\|^2$ \cite{carlsson2016convexification}.
For $f(\x) = \mu\|\x\|_0$ this results in the objective
\begin{equation}
\sum_i \mu-\max(\sqrt{\mu}-|x_i|,0)^2 + \|A\x-\b\|^2.
\label{eq:myrelax}
\end{equation}
The resulting relaxation is non-convex.
However, it was shown in \cite{carlsson2018unbiased,olsson-etal-iccv-2017} that
if a RIP constraint holds then sparse minimizers of this formulation are generally unique (and globally optimal) even though there may exist additional non-sparse local minima. Furhtermore, under the slightly more general RLIP condition,
which is basically the lower bound in \eqref{eq:vectorRIP}, \cite{carlsson2018unbiased} showed that an estimate similar to \eqref{crt} but with a much smaller constant exists. Additionally, in contrast to \eqref{eq:1normrelax} the global optimizer of \eqref{eq:myrelax} is the so called "oracle solution" \cite{candes2006robust}, which is what we get if we minimize $\|A\x-\b\|^2$ over the "true" support of $\x_0$.

Rank regularization methods are largely analogous to the sparsity approaches. If $\A:\mathbb{R}^{m\times n} \rightarrow \mathbb{R}^p$ is a linear operator we are seeking to minimize
\begin{equation}
\mu \rank(X) + \|\A X-\b\|_F^2.
\label{eq:matrixproblem}
\end{equation}
The matrix version of the RIP constraint
\begin{equation}
(1-\delta_r)\|X\|^2_F \leq \|\A X\|^2 \leq (1+\delta_r)\|X\|_F^2,
\label{eq:matrixRIP}
\end{equation}
for all matrices $X$ with $\rank(X)\leq r$, was introduced in \cite{recht-etal-siam-2010}. Similar to the sparsity setting \cite{recht-etal-siam-2010,candes-etal-acm-2011} propose to
replace $\rank(X)$ with the convex nuclear norm $\|X\|_* = \sum_i \sigma_i(X)$, where $\sigma_i(X)$, $i=1,...,N$ are the singular values of $X$.
Since then a number of generalizations that give performance guarantees for the nuclear norm relaxation have appeared, e.g. \cite{recht-etal-siam-2010,oymak2011simplified,candes-etal-acm-2011,candes2009exact}.
The nuclear norm penalizes both small and large singular values and therefore exhibits the same kind of shrinking bias as the $\ell^1$-norm.
To address this non-convex alternatives that are locally optimized have been shown to improve performance \cite{oymak-etal-2015,mohan2010iterative,hu-etal-pami-2013,oh-etal-pami-2016}. In \cite{larsson-olsson-ijcv-2016} it was shown how to compute the convex envelope and proximal operator of
\begin{equation}
g(\rank X)+\|X-X_0\|_F^2,
\end{equation}
where $g$ is a non-decreasing convex function.
Setting $g(\rank X ) = \mu\rank X$ shows that $\Q_2(\mu \rank)(X) = \sum_i \mu-\max(\sqrt{\mu}-\sigma_i(X)|,0)^2$.
Therefore \cite{olsson-etal-iccv-2017} proposed solving
\begin{equation}
\sum_i \mu-\max(\sqrt{\mu}-\sigma_i(X),0)^2 + \|\A X-b\|^2,
\label{eq:Smatrixrelax}
\end{equation}
and gave optimality conditions under RIP \eqref{eq:matrixRIP}, similar to the sparsity case.

While the non-convex relaxations \eqref{eq:myrelax} and \eqref{eq:Smatrixrelax} provide unbiased alternatives to the $\ell^1-$/nuclear- norms and are guaranteed \cite{olsson-etal-iccv-2017,carlsson2018unbiased} to only have one sparse/low-rank stationary point when a sufficiently strong RLIP holds,
it is clear that there can still be poor local minimizers.
For example, let $\x_h$ be a dense vector from the nullspace of $A$ and
$\x_p$ a minimizer of $\|A \x - \b\|$. Then by 
rescaling $\x_h$ so that all the elements of $\x_p+\x_h$ have magnitude strictly larger than $\sqrt{\mu}$ we obtain a vector that 
minimizes the data fit while the regularization $\Q_\gamma(\mu\|\cdot\|_0)$ is (locally) constant around it. This construction also provides local minimizers for the original formulations \eqref{eq:matrixproblem}, \eqref{eq:sparsity} and any other unbiased formulation that leaves elements/singular values larger than some threshold unpenalized. In this paper we remedy this by introducing a small penalty for large singular values. We study relaxations of 
\begin{equation}
\mu \|\x\|_0 + \lambda\|\x\|_1 + \|A\x-b\|_F^2,
\label{eq:cardandl1}
\end{equation}
and
\begin{equation}
\mu \rank (X) + \lambda\|X\|_* + \|\A X-b\|_F^2.
\label{eq:rankandnuclear}
\end{equation}
for sparsity and rank regularization respectively.
We propose to solve these by replacing the relaxation terms with their quadratic envelopes $\Q_2(\mu\|\cdot\|_0+\lambda\|\cdot\|_1)$ and $\Q_2(\mu\rank+\lambda\|\cdot\|_*)$.
Our formulation can be seen as a trade-of between small bias and improved optimization properties. While the terms $\gamma \|\x\|_1$ and $\gamma \|X\|_*$ introduce a bias to small solutions they also increase the convergence basin for cases where the RLIP is not strong enough to ensure global convergence of \eqref{eq:myrelax} and \eqref{eq:Smatrixrelax}.

Simple optimization is often related to good modeling.
Adding a weak shrinking factor may also make sense from a modeling perspective for certain applications. In this paper we exemplify with non-rigid structure form motion (NRSfM). Here each non-zero singular value corresponds to a mode of deformation. When choosing a weaker $\mu$ (larger rank) in order to capture all fine deformations the resulting problem is often ill posed due to unobserved depths. As noted in \cite{olsson-etal-iccv-2017} this may result in a large difference to the true reconstruction despite good data fit. The addition of the $\lambda \|X\|_*$ allows us to separately incorporate a variable bias restricting the size of the  deformations, which regularizes the problem further, see Section~\ref{sec:nonrigid}.

The main contributions of this paper are
\begin{itemize}
	\item We present a class of new regularizers that leverage the benefits of previous convex and unbiased formulations. 
	
	\item We characterize the stationary points of the proposed formulation, and give theoretical results that guarantee the uniqueness of a sparse stationary point.
	
	\item We show how to compute proximal operators of our regularization enabling fast optimization via splitting methods such as $ADMM$ \cite{boyd-etal-ftml-2011}.
	
	\item We show that our new formulations generate better solutions in cases where a weak or no RIP holds.
\end{itemize}

\section{Relaxations and Shrinking Bias}
In this section we will study properties of our proposed relaxations of \eqref{eq:cardandl1} and \eqref{eq:rankandnuclear}. We will present our results in the context of the vector case \eqref{eq:cardandl1}. The corresponding matrix versions follow by applying the regularization term to the singular values, with similar almost identical proofs. Our first theorem (which is proven in Appendix~\ref{app:A}) shows that adding the term $\|\cdot\|_1$ before or after taking the quadratic envelop makes no difference.

\begin{theorem}\label{thm:convenv}Let \( f:\mathbb{R}^d  \to \mathbb{R}   \) be a lower semicontinuous sign-invariant function such that \( f(\mathbf{0})=0  \) and \( f(\x + \y)\ge f(\x)  \) for every \( \x, \y \in \mathbb{R}^d _+  \). Then
\begin{equation}
\Q_2(f + \lambda \| \cdot\|_1 )(\x)= \Q_2(f)(\x) + \lambda \| \x \|_1    \end{equation}
for every \( \x \in \mathbb{R}^d \).
\end{theorem}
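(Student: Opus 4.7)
The plan is to unwind the claim via Fenchel biconjugation and then exploit sign-invariance and monotonicity in the dual picture. By the definition of \(\Q_2\), writing \(\phi := f + \|\cdot\|^2\) and \(\psi := \phi + \lambda\|\cdot\|_1\), the identity to establish becomes
\begin{equation*}
\psi^{**}(\x) = \phi^{**}(\x) + \lambda\|\x\|_1.
\end{equation*}
The ``\(\ge\)'' direction is immediate: \(\phi^{**} + \lambda\|\cdot\|_1\) is convex and bounded above by \(\psi\), hence a minorant of \(\psi^{**}\).

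For the reverse inequality I would compute both conjugates, using the sign-invariance of \(f\) (and hence of \(\phi,\psi\)) to optimize the sign of \(\x\) inside each supremum, which gives
\begin{equation*}
\phi^*(\y) = F(|\y|), \qquad \psi^*(\y) = F(|\y| - \lambda\e),
\end{equation*}
where \(|\cdot|\) is coordinate-wise and
\begin{equation*}
F(\z) := \sup_{\u \in \mathbb{R}^d_+}\{\langle \z, \u\rangle - \phi(\u)\}.
\end{equation*}
The central step is the lemma \(F(\z) = F(\z_+)\) for every \(\z \in \mathbb{R}^d\), where \(\z_+\) denotes the coordinate-wise positive part. This is exactly where the hypothesis \(f(\x+\y)\ge f(\x)\) on \(\mathbb{R}^d_+\) (together with monotonicity of \(\|\cdot\|^2\) on the positive orthant) is needed: if \(z_i\le 0\), then zeroing the \(i\)th coordinate of any candidate \(\u\ge 0\) can only raise \(\langle \z, \u\rangle\) (it removes a nonpositive contribution) and lower \(\phi(\u)\). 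Hence the supremum defining \(F(\z)\) is attained on \(\{\u\ge 0 : u_i = 0 \text{ when } z_i \le 0\}\), and by the same reasoning so is the one defining \(F(\z_+)\); on this common set \(\langle \z,\u\rangle = \langle \z_+,\u\rangle\).

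To finish, take \(\x_0 \in \mathbb{R}^d_+\) without loss of generality (\(\psi^{**}\) is sign-invariant because \(\psi\) is). Since \(\x_0 \ge 0\), the optimum in \(\psi^{**}(\x_0) = \sup_{\y}\{\langle \y,\x_0\rangle - F(|\y|-\lambda\e)\}\) is attained at some \(\y \ge 0\). Parameterize by \(\z := (\y-\lambda\e)_+ \ge 0\); the lemma gives \(F(\y-\lambda\e) = F(\z)\), and among all \(\y\ge 0\) that produce the same \(\z\) the one that maximizes \(\langle \y,\x_0\rangle\) is \(\y = \z + \lambda\e\) (both in the case \(z_i>0\), where \(y_i\) is forced, and in the case \(z_i=0\), where we may push \(y_i\) up to \(\lambda\)). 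Substituting,
\begin{equation*}
\psi^{**}(\x_0) = \lambda\|\x_0\|_1 + \sup_{\z \ge 0}\{\langle \z, \x_0\rangle - F(\z)\} = \lambda\|\x_0\|_1 + \phi^{**}(\x_0),
\end{equation*}
which combined with the easy direction yields the theorem. The main obstacle is cleanly establishing \(F = F\circ(\cdot)_+\) from the monotonicity hypothesis; once that is in hand, everything else is conjugacy bookkeeping, and the matrix version follows identically by applying the same argument to singular values using the sign-invariance of the spectral function.
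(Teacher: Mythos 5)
Your proof is correct and follows essentially the same route as the paper's Appendix~A argument: both reduce to the positive orthant via sign-invariance, use the monotonicity hypothesis to show that the conjugate of the $\ell^1$-augmented function equals the conjugate of the original evaluated at the positive part of the $\lambda$-shifted argument (your lemma $F(\z)=F(\z_+)$ is exactly the paper's step yielding $f^*((\y-\lambda\mathbf{1})_+)$), and then reparameterize the biconjugate by $\z=\y-\lambda\mathbf{1}$ to peel off the $\lambda\|\x\|_1$ term. The only cosmetic differences are that you work explicitly with $\phi=f+\|\cdot\|^2$ and record the easy ``$\ge$'' direction separately, which your final equality computation renders redundant.
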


In view of the above it is clear that $\Q_2(\gamma\|\cdot\|_0 + \lambda \| \cdot\|_1 ) = \reg_{\lambda,\mu}$,
where 
\begin{equation}
\reg_{\mu,\lambda}(\x) = \sum_i \left(\mu-\max(\sqrt{\mu}-|x_i|,0)^2\right) + \lambda \|\x\|_1.
\label{eq:ourreg}
\end{equation}
We therefore propose to minimize the objective
\begin{equation}
r_{\mu,\lambda}(\x)+\|A\x-\b\|^2.
\label{eq:newrelax}
\end{equation}
Note that $\reg_{\mu,\lambda}(\x)+\|A\x -\b\|^2$ reduces to \eqref{eq:1normrelax} if $\mu=0$ and \eqref{eq:myrelax} if $\lambda=0$.

Figure~\ref{fig:fun} shows an illustration of $\reg_{\mu,\lambda}$ for a couple of different values of $\lambda$. When $\lambda=0$ the function is constant for values larger than $\sqrt{\mu} = 1$. Therefore large elements give zero gradients which can result in local algorithms getting stuck in poor local minimizers. Note that if RIP holds the results of \cite{carlsson2018unbiased,olsson-etal-iccv-2017} show that such non-global minimizers can not be sparse under moderate noise conditions, and therefore this situation could in practice be detected. However, for a general problem instance increasing $\lambda$ makes the regularizer closer to being convex, which as we show in Section~\ref{sec:exp}, increases its convergence basin.
\begin{figure}[htb]
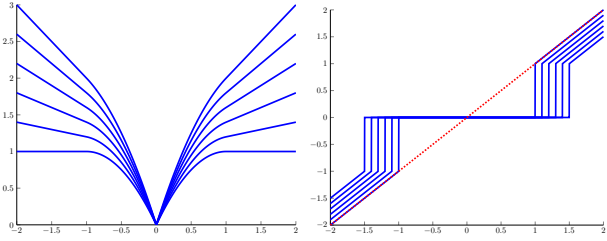

\resizebox{40mm}{!}{\input{reg.tex}}
\resizebox{40mm}{!}{\input{solset.tex}}
\caption{Left: The function \eqref{eq:ourreg} for $\mu=1$ and $\lambda = 0,0.2,0.4,...,1$.
Right: Illustration of \eqref{eq:solset} for the same $\mu$ and $\lambda$. The graphs show $z_i$ (x-axis) versus $x^*_i$ (y-axis). If $z_i = \sqrt{\mu}+\frac{\lambda}{2}$ all $x^*_i \in [0,\sqrt{\mu}]$ are optimal.} 
\label{fig:fun}
\end{figure}

To characterize the kind of shrinking bias that we can expect from this family of relaxations we now consider the problem of minimizing 
\begin{equation}
\min_\x \reg_{\mu,\lambda}(\x)+\|\x - \z\|^2.
\label{eq:localS}
\end{equation}
The minimization is separable in the elements of $\x$ and therefore we are able to solve it by considering
\begin{equation}
\min_{x_i} \mu - (\max(\sqrt{\mu}-|x_i|,0))^2 +\lambda |x_i| + (x_i-z'_i)^2.
\end{equation} 
This is a one dimensional problem that can easily be solved by computing stationary points, see Appendix~\ref{app:B} for details.
This resulting minimizer is given by
\begin{equation}
x^*_i \in 
\begin{cases}
\{z_i -\sign(z_i)\frac{\lambda}{2}\} & |z_i| > \sqrt{\mu}+\frac{\lambda}{2} \\
[0, \sqrt{\mu}]\sign(z_i) & |z_i| = \sqrt{\mu}+\frac{\lambda}{2} \\
\{0\} & |z_i| < \sqrt{\mu}+\frac{\lambda}{2}
\end{cases}.
\label{eq:solset}
\end{equation}
Figure~\ref{fig:fun} illustrates the solution set \eqref{eq:solset} for $\mu=1$ and $\lambda=0,0.2,0.4,...,1$.  The shrinking bias comes from the subtraction of $\frac{\lambda}{2}$ from the magnitude of the large elements. Since we would ideally like these to remain unchanged it is essential to keep $\lambda$ small. On the other hand a larger $\lambda$ makes the regularization function $\reg_{\mu,\lambda}$ "more convex" which simplifies optimization.

We conclude this section with a simple 2D-illustration of the general principle. 
\begin{figure}[htb]
	\begin{center}
		\includegraphics[width=39mm]{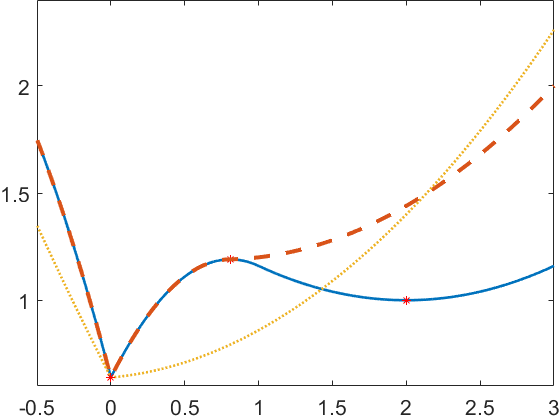}
		\includegraphics[width=40mm]{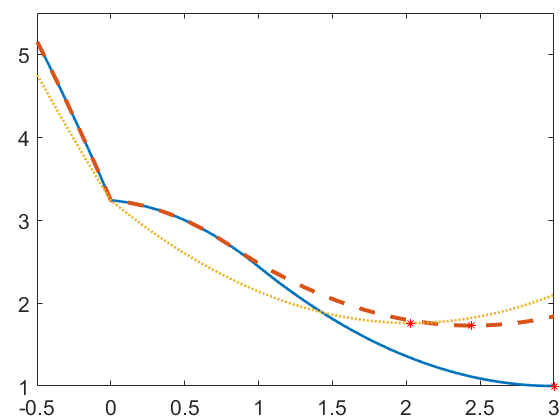}
	\end{center}
	\caption{The residual errors of a two dimensional illustration of the different regularizes.}
	\label{fig:twoDillustration}
\end{figure}
Figure~\ref{fig:twoDillustration} shows the error residuals of 
$\reg_{\mu,\lambda}(\x)+\|A\x-\b\|^2$ for a two dimensional problem with 
\begin{equation}
A = \left(\begin{matrix}
0.4 & 0 \\
0 & 0.6
\end{matrix}\right) \quad \text{and} \quad 
b = \left(\begin{matrix}
0.8 \\
1.8
\end{matrix}\right).
\end{equation}
The blue curves show the two error residuals for $\mu=1$ and $\lambda=0$.
It is easy to verify that the problem has two local minimizers $\x=(2,3)$ and $\x=(0,3)$ (which is also global).
These points (and in addition $(0,0)$ and $(2,0)$) are also local minima to \eqref{eq:sparsity} with $\mu=1$.
The yellow curve shows the effect of using the convex $\ell_1$ formulation \eqref{eq:1normrelax}, with $\lambda=0.7$. Here we have used the smallest possible $\lambda$ so that the optimum of the left residual is $0$ while the right one is non-zero. The resulting solution $\x=(0,2)$ has the correct support however, the magnitude of the non-zero element is reduced from $3$ to $2$ due to the shrinking bias. 
With our approach it is possible to chose an objective which has less bias but still a single local minimizer. Setting $\mu=0.7$ and $\lambda=0.4$ gives the red dashed curves with optimal point $\x \approx (0,2.5)$. 

\section{Oracle Solutions}

For sparsity problems the so called oracle solution \cite{candes2006stable} is what we would obtain if we somehow knew the "true" support of the solution and we were to solve the least squares problem over the non-zero entries of $\x$.
We will use the notation $A_S$ to denote the matrix which has the same entries as $A$ in the columns indexed by $S$ and zeros otherwise. Similarly $A_{\overline{S}}$ have zeros in the columns in $S$ and 
$\x_S$ is a vector with zero elements in $\overline{S}$.
The oracle solution is then 
\begin{equation}
\min_{x_S} \|A_S \x_S - \b \|^2.
\end{equation}
Cand\'es \etal \cite{candes2006stable} showed that under RIP the solution \eqref{eq:1normrelax} approximates the oracle solution. 
In \cite{carlsson2018unbiased} it was shown that under similar conditions \eqref{eq:myrelax} gives exactly the oracle solution.
In this section we will show that our relaxation solves a similar $\ell_1$-regularized least squares problem. 

Suppose $\epsilon = A\x_0 - \b$ and let $S$ be the set of non-zero indexes of $x_0$.
We refer to the regularized oracle solution as the minimizer of 
\begin{equation}
\min_{\x_S} \lambda\|\x_S\|_1+\|A_S\x_S - \b\|^2.
\label{eq:oracle}
\end{equation}
For $\lambda = 0$ this is the least squares solution over the correct support, which is the best we can hope for in the presence of Gaussian noise. For a non-zero $\lambda$ the $\ell^1$ norm modifies the solution by adding a shrinking bias. Note however that in contrast to standard approaches where the $\ell^1$-norm is used to promote sparsity via soft thresholding, here $\x_S$ is already sparse. The value of $\lambda$ is intended to be small as in \eqref{eq:newrelax} where the $\ell^1$-norm is used to increase the convergence basin of \eqref{eq:myrelax}.

\begin{theorem} If $\mu$ is selected such that the solution $\x_S$ of \eqref{eq:oracle} fulfills $|x_{Si}| \notin (0,\sqrt{\mu})$ and the residual errors $\epsilon = A_S\x_S - \b$ are bounded in the sense that 
\begin{equation}
\|A_{\overline{S}}^T \epsilon \|_\infty < \sqrt{\mu},
\end{equation}
then $\x_S$ is a {\bf stationary point} of \eqref{eq:newrelax}.
\end{theorem}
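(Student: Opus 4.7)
My plan is to verify stationarity of $F(\x)=\reg_{\mu,\lambda}(\x)+\|A\x-\b\|^2$ at $\x_S$ by a coordinatewise analysis, exploiting that $\reg_{\mu,\lambda}$ is a sum of one-dimensional terms so its subdifferential splits. Let $\epsilon=A_S\x_S-\b$; since $\x_S$ vanishes outside $S$, the gradient of the data-fit term at $\x_S$ in coordinate $i$ is $2A_i^T\epsilon$, where $A_i$ denotes the $i$-th column of $A$. Stationarity is then the condition $0\in 2A_i^T\epsilon+\partial_{x_i}\reg_{\mu,\lambda}(\x_S)$ for every $i$.

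First I would handle the indices $i\in S$. The hypothesis $|x_{Si}|\notin(0,\sqrt{\mu})$ forces $|x_{Si}|\ge\sqrt{\mu}$, a regime in which the nonconvex summand $\mu-\max(\sqrt{\mu}-|x_i|,0)^2$ is locally constant and contributes nothing to the gradient. Only the $\ell^1$-term is active, so the $i$-th coordinate subdifferential equals $\{\lambda\sign(x_{Si})\}$ and the stationarity condition reduces to
\[
2A_i^T\epsilon+\lambda\sign(x_{Si})=0.
\]
This is precisely the KKT system characterising the minimiser of the regularised oracle problem \eqref{eq:oracle}, so it is automatic from the hypothesis that $\x_S$ solves \eqref{eq:oracle}.

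Next I would handle $i\in\overline{S}$, where $x_{Si}=0$. On the branch $|x_i|\le\sqrt{\mu}$ the $i$-th summand of $\reg_{\mu,\lambda}$ simplifies to $h(x_i)=(2\sqrt{\mu}+\lambda)|x_i|-x_i^2$, whose one-sided derivatives at $0$ are $\pm(2\sqrt{\mu}+\lambda)$. A direct Fr\'echet-subdifferential computation, checking $\liminf_{y\to 0}(h(y)-vy)/|y|\ge 0$ for $y\to 0^\pm$, yields $\partial h(0)=[-(2\sqrt{\mu}+\lambda),\,2\sqrt{\mu}+\lambda]$. Stationarity at coordinate $i$ thus amounts to $|A_i^T\epsilon|\le\sqrt{\mu}+\lambda/2$, which is strictly weaker than the posited bound $\|A_{\overline{S}}^T\epsilon\|_\infty<\sqrt{\mu}$.

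The only genuinely non-routine step is the subdifferential computation at $0$, because $\reg_{\mu,\lambda}$ is nonconvex globally; however near the origin the relevant branch is just the convex piece $(2\sqrt{\mu}+\lambda)|x_i|$ perturbed by the smooth term $-x_i^2$, so the calculation is elementary and nearby gradients reproduce the same interval in the limiting sense. Everything else is a direct identification of the coordinatewise stationarity conditions with the KKT system of \eqref{eq:oracle}.
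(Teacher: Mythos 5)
Your argument is correct in substance and follows essentially the same route as the paper: verify the first-order condition coordinate by coordinate, using the KKT system of \eqref{eq:oracle} on $S$ and the bound $\|A_{\overline{S}}^T\epsilon\|_\infty<\sqrt{\mu}$ on $\overline{S}$ (the paper merely packages this by splitting the objective into the convex $g_\lambda=\reg_{\mu,\lambda}+\|\cdot\|^2$ and the smooth $h=\|A\cdot-\b\|^2-\|\cdot\|^2$, which shifts every subdifferential by $2x_i$ but changes nothing), and your observation that the hypothesis on $\overline{S}$ is stronger than the $|A_i^T\epsilon|\le\sqrt{\mu}+\lambda/2$ you actually need is accurate. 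The one slip is the claim that $|x_{Si}|\notin(0,\sqrt{\mu})$ ``forces'' $|x_{Si}|\ge\sqrt{\mu}$: the hypothesis deliberately allows $x_{Si}=0$ for $i\in S$, and that case must be handled like your $\overline{S}$ analysis, with the bound $|2A_i^T\epsilon|\le\lambda$ coming from the oracle KKT condition ($2A_i^T\epsilon=-\lambda v_i$, $|v_i|\le 1$) rather than from the assumed $\ell^\infty$ bound; since $\lambda\le 2\sqrt{\mu}+\lambda$ this lands inside the subdifferential $[-(2\sqrt{\mu}+\lambda),\,2\sqrt{\mu}+\lambda]$ at $0$, so the patch is one line.
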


\begin{proof}
We first note by differentiating that the solution of \eqref{eq:oracle} fulfills
\begin{equation}
\lambda \v + 2 A_S^T A_S \x_S -2 A_S^T \b = 0, 
\end{equation}
where $\v \in \partial \|\x_S\|_1$. 
The subdifferential of $\|\cdot\|_1$ can be computed element-wise and is given by
\begin{equation}
\partial |x_{Si}| = \begin{cases}
\sign(x_{Si}) & x_{Si} \neq 0 \\
[-1,1] & x_{Si} = 0 \\
\end{cases}.
\label{subdiffg0}
\end{equation}
Note that since the columns in $\overline{S}$ of $A_S$ are zero it is clear that so are the elements in $\overline{S}$ of $\v$, that is, $\v=\v_S$.

Let $g_\lambda(\x) = \reg_{\mu,\lambda}(\x)+\|\x\|^2$ and $h(\x) = -\|\x\|^2+\|A\x-\b\|^2$. 
Note that $g_\lambda(\x) = g_0(\x)+\lambda\|\x\|_1$ is convex with a well defined subdifferential. 
To show that $\x_S$ is stationary in \eqref{eq:newrelax} we need to show that there is a vector $\u_1\in \partial g_0(\x_S)$ and $\u_2 \in \partial\|\x_S \|_1$ such that
\begin{equation}
\u_1+\lambda \u_2 = -\nabla h(\x_S) = 2(I-A^T A)\x_S + 2A^T \b.
\label{eq:u1u2}
\end{equation}
The subdifferential of $g_0$ can be evaluated element-wise giving
\begin{equation}
\partial g_0(x_i) = 
\begin{cases}
\{2x_i\} & |x_i| \geq \sqrt{\mu} \\
\{2\sqrt{\mu}\sign(x_i)\} & 0 < |x_i| \leq \sqrt{\mu} \\
[-2\sqrt{\mu},2\sqrt{\mu}] & x_i=0
\end{cases}.\label{eq:g0subdiff}
\end{equation}

Since $\x_S$ has support in $S$ we have $A\x_S = A_S\x_S$. Furthermore, since $A=A_S+A_{\overline{S}}$ equation \eqref{eq:u1u2} can be written
\begin{equation}
\u_1+\lambda \u_2 = 2\x_S - 2(A_{\overline{S}}^T A_S + A_S^T A_S) \x_S  + 2(A_{\overline{S}}^T+ A^T_S)\b.
\end{equation}
It is clear if we select $\u_2 = \v_S$ this reduces to 
\begin{equation}
\u_1 = 2\x_S + A_{\overline{S}}^T (A_S \x_S- \b) = 2\x_S + 2A_{\overline{S}}^T \epsilon.
\end{equation}
On $S$ we have $\u_1 = 2 \x_S$. Since the non-zero elements of $\x_S$ are larger than $\sqrt{\mu}$ the first case of \eqref{eq:g0subdiff} holds on $S$.
Similarly, on $\overline{S}$ we have $\u_1 = 2A_{\overline{S}}^T \epsilon$ with elements smaller than $2\sqrt{\mu}$. 
Since $\x_S$ is zero on $\overline{S}$ case 3 of \eqref{eq:g0subdiff} holds here, which shows that
$\u_1 \in \partial g_0(\x_S)$. 
\end{proof}

Whether $\x_S$ is the global optimum or not depends on the problem instance. In the following sections we will show that under a sufficiently strong RIP it is the sparsest possible stationary point.

\section{Separation of Stationary Points}
In this section we study the stationary points of the objective function
\eqref{eq:newrelax} under the assumption that $A$ fulfills the RIP condition \eqref{eq:vectorRIP}
for all $\x$ with $\|\x\|_0 \leq K$.
We will extend the results of \cite{carlsson2018unbiased,olsson-etal-iccv-2017} to our class of functionals. Specifically, we show that under some technical conditions two stationary points $\x'$ and $\x''$ have to be separated by $\|\x''-\x'\|_0 > K$. From a practical point of view this means that if we find a stationary point with $\|\x'\|_0 \leq \frac{K}{2}$ we can be certain that this is the sparsest one possible.

\subsection{Stationary Points and Local Approximation}
We will first characterize a stationary point as being a thresholded version of a noisy vector $\z$ which depends on the data. Let $g_\lambda$ and $h$ be defined as in the previous section.

\begin{lemma}\label{lemma:lowrankstatpt}
	If $\z' =  (I -A^T A)\x'+A^T\b$ the point $\x'$ is stationary in \eqref{eq:newrelax}
	if and only if $2\z' \in \partial g_\lambda(\x')$ and if and only if
	\begin{equation}
	\x' \in \argmin_\x \reg_{\mu,\lambda}(\x) + \|\x -  \z'\|^2. \label{eq:localS}
	\end{equation}
\end{lemma}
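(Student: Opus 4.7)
The plan is to exploit the convex-plus-smooth decomposition already introduced in the proof of the oracle theorem: write the objective as
\[
F(\x)=\reg_{\mu,\lambda}(\x)+\|A\x-\b\|^2 = g_\lambda(\x)+h(\x),
\]
where $g_\lambda(\x)=\reg_{\mu,\lambda}(\x)+\|\x\|^2$ is convex (as noted in the text, it is precisely the convex envelope of $\mu\|\cdot\|_0+\lambda\|\cdot\|_1+\|\cdot\|^2$ up to the $\|\cdot\|^2$ offset, by Theorem~\ref{thm:convenv} and the definition of $\Q_2$) and $h(\x)=-\|\x\|^2+\|A\x-\b\|^2$ is continuously differentiable with $\nabla h(\x)=2(A^TA-I)\x-2A^T\b$.

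First I would invoke the sum rule for subdifferentials: since $h$ is $C^1$, the (limiting/Clarke) subdifferential of $F$ satisfies $\partial F(\x')=\partial g_\lambda(\x')+\nabla h(\x')$, and stationarity of $\x'$ in \eqref{eq:newrelax} is by definition $0\in\partial F(\x')$. Substituting the formula for $\nabla h$ gives
\[
0 \in \partial g_\lambda(\x') + 2(A^TA-I)\x' - 2A^T\b,
\]
which rearranges to $2\z'\in\partial g_\lambda(\x')$ with $\z'=(I-A^TA)\x'+A^T\b$. This establishes the first equivalence.

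For the second equivalence, I would compare with the one-dimensional localization \eqref{eq:localS}. Expanding gives
\[
\reg_{\mu,\lambda}(\x)+\|\x-\z'\|^2 = g_\lambda(\x)-2\langle \z',\x\rangle +\|\z'\|^2,
\]
which is a convex function of $\x$. Its minimizers are therefore characterized exactly by the first-order condition $0\in\partial g_\lambda(\x')-2\z'$, i.e., $2\z'\in\partial g_\lambda(\x')$. Chaining the two equivalences then yields the claimed characterization.

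The only delicate point is making sure the sum rule for the (limiting) subdifferential applies when combining the non-convex, non-smooth $\reg_{\mu,\lambda}$ with the smooth $\|A\cdot-\b\|^2$; I would simply cite the standard fact that for locally Lipschitz summands where one is $C^1$, the sum rule holds with equality. Once that is in hand, the rest is an algebraic rearrangement. I do not expect any genuine obstacle — the whole content of the lemma is that the convex envelope structure lets us convert a non-convex stationarity condition into an inclusion in the subdifferential of a convex function, which in turn is a strong proximal-type optimality condition.
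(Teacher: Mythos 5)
Your proposal is correct and follows essentially the same route as the paper: decompose the objective as the convex function $g_\lambda(\x)=\reg_{\mu,\lambda}(\x)+\|\x\|^2$ plus the smooth $h(\x)=-\|\x\|^2+\|A\x-\b\|^2$, identify stationarity with $2\z'\in\partial g_\lambda(\x')$, and then observe that \eqref{eq:localS} is convex with the same first-order condition so that stationarity there is equivalent to global minimality. The paper's proof is simply a terser version of this ("by differentiating"); your explicit justification of the sum rule and the algebraic expansion of $\|\x-\z'\|^2$ are exactly the details it leaves implicit.
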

\begin{proof}
	By differentiating $g_\lambda(\x)+h(\x)$ we see that $\x'$ is stationary in \eqref{eq:newrelax} if and only if
	$2\z' \in \partial g_\lambda(\x')$. Similarly, differentiating \eqref{eq:localS} we see that $\x'$ is stationary in \eqref{eq:localS} if and only if $2\z' \in \partial g_\lambda(\x')$.
	Now recall that by definition \eqref{eq:localS} is convex and therefore $\x'$ being stationary is equivalent to solving \eqref{eq:localS}.
\end{proof}

We will use properties of the vector $\z'$ to establish conditions that ensure that $\x'$ is the sparsest possible stationary point to \eqref{eq:newrelax}. The overall idea which follows \cite{olsson-etal-iccv-2017,carlsson2016convexification} is to show that subdifferential $\partial g_\lambda$ grows faster than $-\nabla h$ and therefore we can only have $-\nabla h(\x) \in \partial g_\lambda(\x)$ in one (sparse) point.
This requires an estimate of the growth of the subgradients of $g_\lambda$ which we now present.

The function $g_\lambda$ is separable and can be evaluated separately for each element of $\x$.
The subdifferential $\partial g_\lambda(x)$ can be written
\begin{equation}
\partial g_\lambda(x) = 
\begin{cases}
\{2x + \lambda \sign(x)\} & |x| \geq \sqrt{\mu} \\
\{(2\sqrt{\mu}+\lambda)\sign(x)\} & 0 < |x| \leq \sqrt{\mu} \\
[-2\sqrt{\mu}-\lambda,2\sqrt{\mu}+\lambda] & x=0
\end{cases}.\label{eq:gsubdiff}
\end{equation}
\begin{figure}[htb]
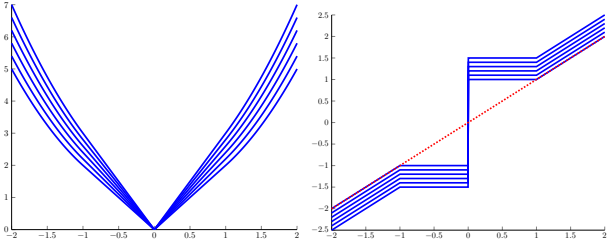

	\begin{center}
		\resizebox{40mm}{!}{\input{gfun.tex}}
		\resizebox{40mm}{!}{\input{gsubdiff.tex}}
	\end{center}
	\caption{The function $g(x)$ (left) and the subdifferential $\partial g_\lambda(x)/2$ (right) for $\mu=1$ and $\lambda=0,0.2,0.4,...,1$. (For comparison we also plot the red dotted curve $y=x$.}
	\label{fig:gfun}
\end{figure}
Figure~\ref{fig:gfun} shows the function $g_\lambda$ and $\partial g_\lambda$. The parameter $\lambda$ adds a constant offset to the positive values of $\partial g_\lambda(x)$ and subtracts the same value for all negative values.
It is clear from Figure~\ref{fig:gfun} that in $(-\infty,-\sqrt{\mu}]$ and $[\sqrt{\mu},\infty)$ 
the subdifferential contains a single element. In addition for any two elements $x'',x'$ in one of these intervals we have
\begin{equation}
\langle \partial g_\lambda(x'')-\partial g_\lambda(x'),x''-x'\rangle = |x''-x'|^2.
\end{equation}
For the other parts the subdifferential grows less. To ensure a certain growth we need to add some assumptions on the subdifferential which is done in the following result (which is proven in Appendix~\ref{app:C}).

\begin{lemma}\label{lemma:subgradbnd}
	Assume that $2\z' \in \partial g_\lambda(\x')$. If the elements $z'_i$ fulfill
	$|z'_i| \notin [(1-\delta_K)\sqrt{\mu}+\frac{\lambda}{2},\frac{\sqrt{\mu}}{1-\delta_K}+\frac{\lambda}{2}]$ for every $i$, then for any $\z'$ with $2\z'' \in \partial g_\lambda(\x'+\v)$ we have
	\begin{equation}
	\langle\z''-\z',\v\rangle >  \delta_K \|\v\|^2,
	\label{eq:subdiffest}
	\end{equation}
	as long as $\|\v\| \neq 0$.
\end{lemma}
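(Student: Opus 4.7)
The plan is to reduce the vector inequality to a collection of one-dimensional estimates and then perform a case analysis on the subdifferential. Since $g_\lambda(\x)=\reg_{\mu,\lambda}(\x)+\|\x\|^2$ is a sum of identical one-dimensional convex functions in the coordinates, the inclusion $2\z'\in\partial g_\lambda(\x')$ is equivalent to $2z'_i\in\partial g_\lambda(x'_i)$ for every $i$. It therefore suffices to prove the coordinatewise bound
\[
(z''_i-z'_i)\,v_i \;\geq\; \delta_K\, v_i^2
\]
for each $i$, with strict inequality whenever $v_i\neq 0$; summing then yields the lemma.

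The first step is to convert the hypothesis on $z'_i$ into a structural constraint on $x'_i$ using the explicit form \eqref{eq:gsubdiff}. The scaled subdifferential $\partial g_\lambda(x)/2$ equals $x+\sign(x)\lambda/2$ when $|x|\geq\sqrt{\mu}$, takes the single value $\sign(x)(\sqrt{\mu}+\lambda/2)$ when $0<|x|\leq\sqrt{\mu}$, and fills the whole interval $[-\sqrt{\mu}-\lambda/2,\sqrt{\mu}+\lambda/2]$ at $x=0$. Excluding $|z'_i|$ from $[(1-\delta_K)\sqrt{\mu}+\lambda/2,\sqrt{\mu}/(1-\delta_K)+\lambda/2]$ therefore forces one of two disjoint alternatives: either \textbf{(A)} $x'_i=0$ together with $|z'_i|<(1-\delta_K)\sqrt{\mu}+\lambda/2$, or \textbf{(B)} $|x'_i|>\sqrt{\mu}/(1-\delta_K)$ together with $z'_i=x'_i+\sign(x'_i)\lambda/2$.

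The bulk of the argument is then a direct case analysis based on where $y:=x'_i+v_i$ lies. For each combination of branch of $x'_i$ and branch of $y$, the value $z''_i$ is determined (or bracketed, at $y=0$) by \eqref{eq:gsubdiff}, and the bound is verified by elementary manipulations. Most cases are immediate: whenever $x'_i$ and $y$ lie on the same outer branch one has $z''_i-z'_i=v_i$ and the inequality reduces to $v_i^2>\delta_K v_i^2$, which is strict since $\delta_K<1$; when $y$ lies beyond the opposite outer branch a linear estimate plus a nonnegative $\lambda$-term closes the argument; in alternative~(A) with $y$ inside the flat region, the gap $|z''_i-z'_i|\geq(\sqrt{\mu}+\lambda/2)-|z'_i|>\delta_K\sqrt{\mu}$ combined with $|v_i|<\sqrt{\mu}$ suffices.

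The genuine difficulty — and the case that pins down the threshold $\sqrt{\mu}/(1-\delta_K)$ in the hypothesis — is alternative (B) with $y\in[-\sqrt{\mu},\sqrt{\mu}]$. There $z''_i$ is pinned to $\pm(\sqrt{\mu}+\lambda/2)$, so $z''_i-z'_i$ can be much smaller in magnitude than $v_i$; after cancelling the $\lambda/2$ offsets and using $|v_i|\leq|x'_i|$ when $y$ does not cross the sign of $x'_i$ (and a separate nonnegativity argument when it does), the desired inequality reduces algebraically to $|x'_i|(1-\delta_K)>\sqrt{\mu}$, which is precisely (B). Strict inequality at any coordinate with $v_i\neq 0$ then follows either from $\delta_K<1$ or from the strict inequalities already present in (A) and (B).
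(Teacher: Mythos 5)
Your proposal is correct and follows essentially the same route as the paper: reduce to the scalar case, observe that the hypothesis on $z'_i$ forces either $x'_i=0$ with $|z'_i|<(1-\delta_K)\sqrt{\mu}+\frac{\lambda}{2}$ or $|x'_i|>\frac{\sqrt{\mu}}{1-\delta_K}$, and then verify the secant-slope bound branch by branch. The paper packages that verification as a comparison of the graph of $\partial g_\lambda/2$ with the line of slope $\delta_K$ through $(x'_i,z'_i)$ (appealing to Figure~\ref{fig:gfun}), whereas you carry out the same comparisons algebraically, correctly isolating the case where $x'_i+v_i$ falls in the flat region as the one that forces the threshold $\frac{\sqrt{\mu}}{1-\delta_K}$.
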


Note that a similar estimate for the $\nabla h$. Differentiating $h$ gives
\begin{equation}
\nabla h(\x) = 2(I-A^TA)\x + 2A^T\b.
\end{equation}
Since $\nabla h(\x)$ is linear we get
\begin{equation}
\begin{split}
|\langle \nabla h(\x'+\v)-\nabla h(\x'),\v \rangle| =  \\
|2(\|A\v\|^2-\|\v\|^2)| \leq 2\delta_K\|\v\|^2,
\end{split}
\label{eq:hest}
\end{equation}
if $\|\v\|_0 \leq K$, since RIP holds.

We are now ready to prove that stationary points of~\eqref{eq:newrelax} can be separated in terms of the cardinality of their difference. The result requires that the elements of the vector $\z'$ are not too close to the threshold $\sqrt{\mu}+\frac{\lambda}{2}$. This condition is a natural restriction since the vector $\z$ is related to the noise of the problem \cite{carlsson2018unbiased}, and for very large noise levels we can expect that there will be multiple solutions.

\begin{theorem}\label{thm:statpoint}
	Assume that $\x'$ is a stationary point with $2\z'\in\partial g_\lambda(\x')$ and that each element of $\z'$ fulfills $|z'_i| \notin [(1-\delta_K)\sqrt{\mu}+\frac{\lambda}{2},\frac{\sqrt{\mu}}{1-\delta_K}+\frac{\lambda}{2}]$.
	If $\x''$ is another stationary point then $\|\x''-\x'\|_0 >K $. 
	If, in addition, $\|\x'\|_0 <\frac{K}{2} $ then
	\begin{equation}
	\x' \in \argmin_{\|\x\|_0 \leq \frac{K}{2}} \reg_{\mu,\lambda}(\x)+\|A\x-\b\|^2.
	\label{eq:restricted_min}
	\end{equation}
\end{theorem}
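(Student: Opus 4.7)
The plan is to prove the two assertions in sequence: the separation bound follows quickly from Lemmas~\ref{lemma:lowrankstatpt} and~\ref{lemma:subgradbnd} combined with RIP, while the optimality statement requires a Bregman-type expansion.

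For the separation claim, suppose for contradiction that $\x'' = \x' + \v$ is a second stationary point with $\|\v\|_0 \le K$ and $\v \ne 0$. Lemma~\ref{lemma:lowrankstatpt} gives $2\z'' \in \partial g_\lambda(\x'')$ where $\z'' = (I - A^T A)\x'' + A^T \b = \z' + (I - A^T A)\v$. Pairing the identity $\z'' - \z' = (I-A^TA)\v$ with $\v$ yields
\[
\langle \z''-\z', \v\rangle \;=\; \|\v\|^2 - \|A\v\|^2 \;\le\; \delta_K \|\v\|^2,
\]
the inequality coming from RIP~\eqref{eq:vectorRIP}. But the hypothesis on $\z'$ is precisely the premise of Lemma~\ref{lemma:subgradbnd}, which gives the strict reverse bound $\langle \z''-\z',\v\rangle > \delta_K\|\v\|^2$. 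Contradiction, so $\|\v\|_0 > K$.

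For the optimality claim, let $\hat\x$ be any competitor with $\|\hat\x\|_0 \le K/2$, and set $\v = \hat\x - \x'$, so $\|\v\|_0 \le K$ using $\|\x'\|_0 < K/2$. Using $\nabla h(\x') = -2\z'$ and the fact that $h$ is quadratic with Hessian $2(A^TA - I)$, a direct Taylor expansion yields
\[
F(\hat\x) - F(\x') \;=\; \bigl[g_\lambda(\hat\x) - g_\lambda(\x') - \langle 2\z', \v\rangle\bigr] \;+\; \bigl(\|A\v\|^2 - \|\v\|^2\bigr),
\]
where $F = \reg_{\mu,\lambda}(\cdot) + \|A\cdot - \b\|^2$. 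The second parenthetical is $\ge -\delta_K\|\v\|^2$ by RIP, while the bracketed term is the Bregman divergence of the convex $g_\lambda$ at $\x'$ with respect to the subgradient $2\z'$, hence non-negative. The remaining task is to show this divergence strictly exceeds $\delta_K\|\v\|^2$ whenever $\v \ne 0$.

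The divergence splits coordinatewise, and the hypothesis on $\z'$ together with \eqref{eq:gsubdiff} forces each $x'_i$ into one of two regimes only: either $x'_i = 0$ with $|z'_i| < (1-\delta_K)\sqrt{\mu} + \lambda/2$, or $|x'_i| > \sqrt{\mu}/(1-\delta_K)$, while the kink regime $0 < |x'_i| < \sqrt{\mu}$ is forbidden. In the first regime, bounding $D^i(\hat x_i, 0; 2z'_i)$ from below using the piecewise form of $g^i$ and completing the square gives $D^i > \delta_K \hat x_i^2$ whenever $\hat x_i \ne 0$. In the second regime $g^i$ is smooth and $2$-strongly convex at $x'_i$, which handles perturbations staying in the quadratic piece; the remaining case where $\hat x_i$ crosses into the linear piece is exactly the borderline configuration controlled by the slack $|x'_i| > \sqrt{\mu}/(1-\delta_K)$. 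Summing over $i$ yields $D(\hat\x,\x') > \delta_K\|\v\|^2$ for every $\v \ne 0$, whence $F(\hat\x) > F(\x')$ and thus~\eqref{eq:restricted_min}.

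The main obstacle lies in this per-coordinate divergence bound: it is the divergence-level counterpart of the monotonicity estimate of Lemma~\ref{lemma:subgradbnd}, and a short but unavoidable case analysis is required across the kink at $|x| = \sqrt{\mu}$ and across opposite signs of $\hat x_i$ and $x'_i$. This is the step in which the "gap" condition on $|z'_i|$ is actually consumed.
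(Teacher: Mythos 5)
Your first claim is proved essentially as in the paper: you combine the explicit formula $\z''-\z'=(I-A^TA)\v$ from Lemma~\ref{lemma:lowrankstatpt} with the RIP bound $\langle(I-A^TA)\v,\v\rangle\le\delta_K\|\v\|^2$ and contradict the strict lower bound of Lemma~\ref{lemma:subgradbnd}; the paper phrases this as $\langle 2\z''+\nabla h(\x''),\v\rangle>0$, but the content is identical. For the optimality claim you diverge from the paper. The paper shows that $g'_{\lambda,\v}(\x'+t\v)+h'_\v(\x'+t\v)>0$ for all $t>0$ (by applying Lemma~\ref{lemma:subgradbnd} and the RIP estimate at every point of the segment) and concludes that the objective is increasing along the segment from $\x'$ to any competitor of cardinality at most $K/2$. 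You instead write the exact quadratic expansion of $h$ and reduce everything to the single inequality $g_\lambda(\hat\x)-g_\lambda(\x')-\langle 2\z',\v\rangle>\delta_K\|\v\|^2$. That inequality is true, and your identity and regime analysis for $\x'$ are correct, but the coordinatewise case analysis you label as the ``main obstacle'' and leave sketched is not actually needed: the Bregman divergence equals $\int_0^1\langle 2\z''_t-2\z',\v\rangle\,dt$ for any measurable selection $2\z''_t\in\partial g_\lambda(\x'+t\v)$, and Lemma~\ref{lemma:subgradbnd} applied with perturbation $t\v$ gives $\langle 2\z''_t-2\z',\v\rangle>2\delta_K t\|\v\|^2$, so integrating yields exactly the bound $>\delta_K\|\v\|^2$. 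Closing the argument this way makes your route fully rigorous and arguably cleaner than the paper's monotonicity-along-segments formulation, since it isolates where the RIP constant enters; as written, however, the sketched per-coordinate bound is the one incomplete step.
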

\begin{proof}
	Assume that $\|\x''-\x'\|_0  \leq K$.
	Since $\x'$ is a stationary point we have
	$2\z'+ \nabla h(\x') = 0$,
	where $2\z' \in \partial g_\lambda(\x')$.
	We first show that $2\z''+ \nabla h(\x'') \neq 0$ if $2\z'' \in \partial g_\lambda(\x'')$.
	If $\v = \x''-\x'$ has $\|\v\|_0 \leq K$ then according to Lemma \ref{lemma:subgradbnd} and \eqref{eq:hest} we have
	\begin{equation}
	\begin{split}
	\langle 2 \z'' + \nabla h(\x''), \v \rangle = \\
	2\langle\z''-\z',\v\rangle + \langle\nabla h(\x + \v)-\nabla h(\x), \v\rangle > 0.
	\end{split}
	\label{eq:derest}
	\end{equation}
	Thus $\x''$ cannot be stationary. 

	Since $g_\lambda$ is convex its directional derivative exists and is given by
	\begin{equation}
	g'_{\lambda,\v}(\x) = \max_{2\z' \in \partial g_\lambda(\x)}\langle 2 \z',\v \rangle.
	\end{equation}
	Since $h'_\v(\x) = \langle \nabla h'(\x),\v\rangle$,
	\eqref{eq:derest} shows that $g'_{\lambda,\v}(\x'+t\v) + h'_\v(\x'+t\v) > 0$ for any $t > 0$ if $\|\v\|_0 \leq K$.
	Now, if $\|\x'\|_0 <\frac{K}{2}$ then $g_\lambda+h$ is increasing on every line segment between $\x'$ and any other point 
	$\x''$ with $\|\x''\|_0 \leq \frac{K}{2}$.
\end{proof}

\section{Optimization}
In this section we present a simple algorithm for optimizing objective functions of the type \eqref{eq:newrelax}. We restrict ourselves to sparsity problems since the same approach works for rank regularization with minimal changes.

\subsection{Algorithm}
For optimization we employ the popular ADMM \cite{boyd-etal-ftml-2011} approach. This is a splitting scheme that uses two copies of the $\x$ and enforces them to be equal using dual variables.
The augmented Lagrangian for the problem is
\begin{equation}
L(\x,\y,{\bm \eta}) = \reg_{\mu,\lambda}(\x) + \rho \|\x-\y+{\bm \eta}\|_F^2 + \|A\y-\b\|^2 -\rho \|{\bm \eta}\|^2.
\end{equation}
In each iteration $t$ of ADMM the variable updates are given by
\begin{eqnarray}
\x_{t+1}  = & \argmin_\x \reg_{\mu,\lambda}(\x) + \rho \|\x-\y_t+{\bm \eta}_t\|^2, \label{eq:proxop1} \\
\y_{t+1}  = &\argmin_\y \rho \|\x_{t+1}-\y+{\bm \eta}_t\|^2 + \|A\y-\b\|^2 ,\label{eq:proxop2} \\
{\bm \eta}_{t+1}  = & {\bm \eta}_{t} + \x_{t+1}-\y_{t+1}.
\end{eqnarray}
The update in equation \eqref{eq:proxop2} is a simple least squares minimization with closed form solution. Problem \eqref{eq:proxop1} is solved using the proximal operator of $r_{\mu,\lambda}$ which we present in the next section. We refer to \cite{boyd-etal-ftml-2011} for details on ADMM.

\subsection{The proximal operator}\label{prox}

To solve \eqref{eq:prox1} we can use the proximal operator
\begin{equation}\prox_{\frac{\reg_{\mu,\lambda}}{\rho}}(\y)=\argmin_{\x} \frac{1}{\rho}\reg_{\mu,\lambda}(\x)+\|\x-\y\|^2.
\end{equation}
The following result (which is proven in Appendix~\ref{app:D}) shows that in general the proximal operator of
$\Q_\gamma (f + \lambda \| \cdot \|_1)$ is easy to compute if the proximal operator of $\Q_\gamma(f)$ is known.
\begin{proposition}\label{prop:prox}
	Let \( f:\mathbb{R}^d  \to \mathbb{R}   \) be a lower semicontinuous sign-invariant function such that \( f(\mathbf{0})=0  \) and \( f(\x + \y)\ge f(\x)  \) for every \( \x, \y \in \mathbb{R}^d _+  \). Then \begin{equation}\prox_{ \Q_\gamma (f + \lambda \| \cdot \|_1)/ \rho   } (\y) = \prox_{ \Q_\gamma (f)  / \rho  } ( \prox_{\lambda \|\cdot \|_1 / \rho}  ) (\y)       
	\end{equation}for every \( \y \in \mathbb{R}^d   \).	
\end{proposition}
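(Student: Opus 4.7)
The plan is as follows. First, I would invoke Theorem~\ref{thm:convenv} (used in its natural $\gamma$-parameterized form, since the argument there goes through verbatim for any positive $\gamma$) to obtain the identity
\[
\Q_\gamma(f + \lambda\|\cdot\|_1)(\x) = \Q_\gamma(f)(\x) + \lambda\|\x\|_1.
\]
This immediately rewrites the left-hand side of the claim as
\[
\argmin_{\x\in\R^d}\; \frac{1}{\rho}\Q_\gamma(f)(\x) + \frac{\lambda}{\rho}\|\x\|_1 + \|\x-\y\|^2,
\]
and so the task becomes showing that this $\argmin$ coincides with $\prox_{\Q_\gamma(f)/\rho}(\tilde\y)$, where $\tilde\y := \prox_{\lambda\|\cdot\|_1/\rho}(\y)$ is coordinate-wise soft thresholding at level $\lambda/(2\rho)$.

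Next I would run a sign-alignment argument. Since $f$ is sign-invariant, so is $\Q_\gamma(f)$, and of course so is $\|\cdot\|_1$. Hence if $\x^\star$ is any optimizer, replacing $x^\star_i$ by $\sign(y_i)|x^\star_i|$ can only decrease the quadratic $(x_i-y_i)^2$ while leaving the penalty terms unchanged. Therefore I may restrict the minimization to $\x$ with $\sign(x_i)=\sign(y_i)$ (or $x_i=0$) for every $i$. Writing $u_i:=|x_i|\ge 0$ and $v_i:=|y_i|$, the quadratic-plus-$\ell^1$ part then decouples coordinate-wise into $(u_i-v_i)^2 + (\lambda/\rho)u_i$, and a completion of squares gives
\[
(u_i-v_i)^2 + \frac{\lambda}{\rho}u_i \;=\; (u_i-\tilde v_i)^2 + C_i + u_i\cdot \max\!\Bigl(\tfrac{\lambda}{\rho}-2v_i,\,0\Bigr),
\]
where $\tilde v_i := \max(v_i-\lambda/(2\rho),0) = |\tilde y_i|$ and $C_i$ is independent of $\x$.

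When $v_i\ge \lambda/(2\rho)$ the last correction vanishes, so the summand is, up to an additive constant, simply $(x_i-\tilde y_i)^2$. When $v_i<\lambda/(2\rho)$, the correction has a strictly positive coefficient, so on $u_i\ge 0$ it is minimized only at $u_i=0=\tilde v_i$; moreover, the monotonicity hypothesis $f(\x+\y)\ge f(\x)$ for $\x,\y\in\R^d_+$ transfers to $\Q_\gamma(f)$ via its variational definition, which guarantees that decreasing $u_i$ to zero can only decrease $\Q_\gamma(f)(\x)$ as well. Hence the minimizer satisfies $x^\star_i=\tilde y_i=0$ in this regime, and combining the two regimes reduces the original objective, up to an $\x$-independent constant, to $\tfrac{1}{\rho}\Q_\gamma(f)(\x) + \|\x-\tilde\y\|^2$, whose minimizer is by definition $\prox_{\Q_\gamma(f)/\rho}(\tilde\y)$.

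The main obstacle is exactly the regime $v_i<\lambda/(2\rho)$, where the completion-of-squares identity is no longer exact: there one must separately invoke monotonicity of $f$ (and hence of $\Q_\gamma(f)$) on the positive orthant to force $x^\star_i=0$, matching $\tilde y_i=0$. The remainder of the argument is bookkeeping around absolute values and sign-invariance of the quadratic envelope, which are already packaged for us by Theorem~\ref{thm:convenv}.
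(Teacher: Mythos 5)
Your proposal is correct and follows essentially the same route as the paper's Appendix~D argument: apply Theorem~\ref{thm:convenv} to split off the $\ell^1$ term, reduce to the positive orthant by sign invariance, complete the square so that the $\ell^1$-plus-quadratic part becomes a quadratic centered at the soft-thresholded point $\prox_{\lambda\|\cdot\|_1/\rho}(\y)$, and force $x_i^\star=0$ on the sub-threshold coordinates (the paper packages this last step as the substitution $\x \mapsto \chi_{S^c}\x$, i.e.\ pairing $\x$ with $(\y-\frac{\lambda}{2\rho}\mathbf{1})_+$, whereas you handle it coordinate-wise). The one ingredient you assert rather than prove, namely that monotonicity of $f$ on $\R^d_+$ transfers to $\Q_\gamma(f)$, is true and does follow from the variational definition (via sign alignment applied first to the Moreau envelope and then to its negative's $\S_\gamma$-transform), and is implicitly relied on at the corresponding point of the paper's proof as well.
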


For our case $f(\x) = \mu\|\x\|_0$ the proximal operator is separable and each element of the vector $\x$ can be treated independently. Adding soft thresholding to the results of \cite{larsson-olsson-ijcv-2016} we get that for $\rho > 1$ the proximal solution is given by
\begin{equation}
z_i =
\begin{cases}
\max(y_i- \lambda/2\rho,0) & y_i \geq 0 \\
\min(y_i+ \lambda/2\rho,0) & y_i \leq 0
\end{cases}\label{eq:prox1}
\end{equation}
and
\begin{equation}
x_i =
\begin{cases}
z_i & \sqrt{\mu} \le |z_i|  \\
\frac{(\rho+1) z_i-\sqrt{\mu}\sign(z_i)}{\rho}  & \frac{\sqrt{\mu}}{\rho+1} \le |z_i| \le \sqrt{\mu}\\
0 &  |z_i| \leq  \frac{\sqrt{\mu}}{\rho+1}  \\
\end{cases}.\label{eq:prox2}
\end{equation}

\section{Experiments}\label{sec:exp}
In this section we test the proposed formulation on a number of real and synthetic experiments. Our focus is to evaluate the proposed method's robustness to local minima and the effects of its shrinking bias.

\subsection{Random Matrices}
In this section we compare the robustness to local minima of the relaxations \eqref{eq:1normrelax}, \eqref{eq:myrelax} and \eqref{eq:newrelax}. Note that \eqref{eq:1normrelax} and \eqref{eq:myrelax} are special cases of \eqref{eq:newrelax}, obtained by letting $\lambda$ or $\mu$ equal $0$ (by Theorem \ref{thm:convenv}).

We generated $A$-matrices of size $100 \times 200$ by drawing the columns from a uniform distribution over the unit sphere $\mathbb{S}^{99}$ in $\R^{100}$, and the vector $\x_0$ was selected to have $10$ random nonzero elements with random magnitudes between $2$ and $4$, resulting in $\|x_0\|\approx 10$. We then computed $\b = A\x_0 + \epsilon$ for different values of random noise with $\|\epsilon\|$ ranging from 0 to 5. For \eqref{eq:myrelax} we used $\mu=1$ and for \eqref{eq:1normrelax} we used $\lambda_{\ell^1}=2\frac{\sqrt{2\log(200)}}{\sqrt{200}}\|\epsilon\|\approx 0.5 \|\epsilon\|$; see \cite{carlsson2018unbiased} for the rationale behind these choices. For \eqref{eq:newrelax} we again chose $\mu=1$ but used $\lambda=\lambda_{\ell^1}/6$. Figure \eqref{fig:hist} plots $\|\x-\x_S\|$ for the estimated $\x$ with the three methods, as a function of $\|\epsilon\|$. Both \eqref{eq:myrelax} and \eqref{eq:newrelax} do better than traditional $\ell^1$ in the entire range, \eqref{eq:myrelax} finds $x_S$ with 100\% accuracy until around $\|\epsilon\|\approx 3$, where \eqref{eq:newrelax} starts to perform better.
This is likely due to the fact that the small $\ell^1$ term helps the (non-convex) method \eqref{eq:newrelax} to not get stuck in local minima. To test this conjecture, we ran the same experiment for 50 iterations for the fixed noise level $\|\epsilon\|=3.5$ and chose as initial point the least squares solution  $\argmin_\x\|A\x-\b\|^2$, which is known to be close to many local minima. The histograms to the right in Figure \ref{fig:hist} show the cardinality of the found solution.
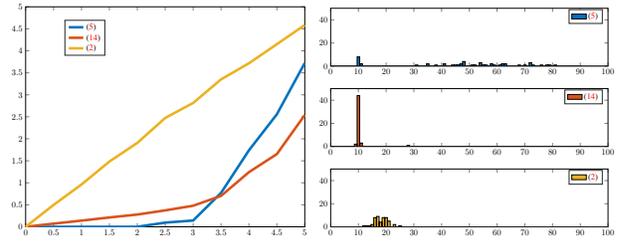
\begin{figure}[htb]
	\begin{center}
		\resizebox*{40mm}{!}{
%
%
\definecolor{mycolor1}{rgb}{0.00000,0.44700,0.74100}%
\definecolor{mycolor2}{rgb}{0.85000,0.32500,0.09800}%
\definecolor{mycolor3}{rgb}{0.92900,0.69400,0.12500}%
\begin{tikzpicture}

\begin{axis}[%
width=4.521in,
height=3.566in,
at={(0.758in,0.481in)},
scale only axis,
xmin=0,
xmax=5,
ymin=0,
ymax=5,
axis background/.style={fill=white},
legend style={at={(0.14,0.779)}, anchor=south west, legend cell align=left, align=left, draw=white!15!black}
]
\addplot [color=mycolor1, line width=3.0pt]
  table[row sep=crcr]{%
0	4.44180156903459e-15\\
0.5	4.56344299588337e-15\\
1	4.78574890944864e-15\\
1.5	4.36886217533249e-15\\
2	4.44225998597334e-15\\
2.5	0.0952243718675539\\
3	0.142375195422821\\
3.5	0.770563074291336\\
4	1.73936687849435\\
4.5	2.55691388996716\\
5	3.7236181868811\\
};
\addlegendentry{\eqref{eq:myrelax}}

\addplot [color=mycolor2, line width=3.0pt]
  table[row sep=crcr]{%
0	4.44180156903459e-15\\
0.5	0.0720319417219271\\
1	0.140480941516429\\
1.5	0.212339898968997\\
2	0.27860999561295\\
2.5	0.373314709810426\\
3	0.478636897015399\\
3.5	0.702200720298089\\
4	1.24206600781905\\
4.5	1.65310260650975\\
5	2.54083289108269\\
};
\addlegendentry{\eqref{eq:newrelax}}

\addplot [color=mycolor3, line width=3.0pt]
  table[row sep=crcr]{%
0	0\\
0.5	0.496801844625762\\
1	0.961325110207102\\
1.5	1.4828189275797\\
2	1.90639196759883\\
2.5	2.47387944248567\\
3	2.81496870245749\\
3.5	3.35107532346059\\
4	3.716631043707\\
4.5	4.15038218861922\\
5	4.5860770273577\\
};
\addlegendentry{\eqref{eq:1normrelax}}

\end{axis}
\end{tikzpicture}
		\resizebox*{40mm}{!}{
%
%
\definecolor{mycolor1}{rgb}{0.00000,0.44700,0.74100}%
\definecolor{mycolor2}{rgb}{0.85000,0.32500,0.09800}%
\definecolor{mycolor3}{rgb}{0.92900,0.69400,0.12500}%
\begin{tikzpicture}

\begin{axis}[%
width=4.521in,
height=0.944in,
at={(0.758in,3.103in)},
scale only axis,
bar shift auto,
xmin=0,
xmax=100,
ymin=0,
ymax=50,
axis background/.style={fill=white},
legend style={legend cell align=left, align=left, draw=white!15!black}
]
\addplot[ybar, bar width=3, fill=mycolor1, draw=black, area legend] table[row sep=crcr] {%
1	0\\
2	0\\
3	0\\
4	0\\
5	0\\
6	0\\
7	0\\
8	0\\
9	0\\
10	8\\
11	2\\
12	0\\
13	0\\
14	0\\
15	0\\
16	0\\
17	0\\
18	0\\
19	0\\
20	0\\
21	0\\
22	0\\
23	0\\
24	0\\
25	0\\
26	0\\
27	0\\
28	0\\
29	0\\
30	0\\
31	1\\
32	0\\
33	0\\
34	0\\
35	2\\
36	0\\
37	0\\
38	1\\
39	0\\
40	0\\
41	2\\
42	0\\
43	0\\
44	1\\
45	1\\
46	1\\
47	2\\
48	4\\
49	0\\
50	0\\
51	1\\
52	1\\
53	0\\
54	3\\
55	1\\
56	1\\
57	0\\
58	2\\
59	1\\
60	0\\
61	1\\
62	2\\
63	2\\
64	0\\
65	0\\
66	0\\
67	0\\
68	1\\
69	0\\
70	1\\
71	0\\
72	3\\
73	1\\
74	0\\
75	0\\
76	1\\
77	0\\
78	1\\
79	1\\
80	0\\
81	1\\
82	0\\
83	0\\
84	0\\
85	0\\
86	0\\
87	0\\
88	0\\
89	0\\
90	0\\
};
\addplot[forget plot, color=white!15!black] table[row sep=crcr] {%
0	0\\
100	0\\
};
\addlegendentry{\eqref{eq:myrelax}}

\end{axis}

\begin{axis}[%
width=4.521in,
height=0.944in,
at={(0.758in,1.792in)},
scale only axis,
bar shift auto,
xmin=0,
xmax=100,
ymin=0,
ymax=50,
axis background/.style={fill=white},
legend style={legend cell align=left, align=left, draw=white!15!black}
]
\addplot[ybar, bar width=3, fill=mycolor2, draw=black, area legend] table[row sep=crcr] {%
1	0\\
2	0\\
3	0\\
4	0\\
5	0\\
6	0\\
7	0\\
8	0\\
9	2\\
10	44\\
11	3\\
12	0\\
13	0\\
14	0\\
15	0\\
16	0\\
17	0\\
18	0\\
19	0\\
20	0\\
21	0\\
22	0\\
23	0\\
24	0\\
25	0\\
26	0\\
27	0\\
28	1\\
29	0\\
30	0\\
31	0\\
32	0\\
33	0\\
34	0\\
35	0\\
36	0\\
37	0\\
38	0\\
39	0\\
40	0\\
41	0\\
42	0\\
43	0\\
44	0\\
45	0\\
46	0\\
47	0\\
48	0\\
49	0\\
50	0\\
51	0\\
52	0\\
53	0\\
54	0\\
55	0\\
56	0\\
57	0\\
58	0\\
59	0\\
60	0\\
61	0\\
62	0\\
63	0\\
64	0\\
65	0\\
66	0\\
67	0\\
68	0\\
69	0\\
70	0\\
71	0\\
72	0\\
73	0\\
74	0\\
75	0\\
76	0\\
77	0\\
78	0\\
79	0\\
80	0\\
81	0\\
82	0\\
83	0\\
84	0\\
85	0\\
86	0\\
87	0\\
88	0\\
89	0\\
90	0\\
};
\addplot[forget plot, color=white!15!black] table[row sep=crcr] {%
0	0\\
100	0\\
};
\addlegendentry{\eqref{eq:newrelax}}

\end{axis}

\begin{axis}[%
width=4.521in,
height=0.944in,
at={(0.758in,0.481in)},
scale only axis,
bar shift auto,
xmin=0,
xmax=100,
ymin=0,
ymax=50,
axis background/.style={fill=white},
legend style={legend cell align=left, align=left, draw=white!15!black}
]
\addplot[ybar, bar width=3, fill=mycolor3, draw=black, area legend] table[row sep=crcr] {%
1	0\\
2	0\\
3	0\\
4	0\\
5	0\\
6	0\\
7	0\\
8	0\\
9	0\\
10	0\\
11	0\\
12	1\\
13	1\\
14	1\\
15	2\\
16	8\\
17	9\\
18	4\\
19	8\\
20	8\\
21	5\\
22	0\\
23	2\\
24	0\\
25	1\\
26	0\\
27	0\\
28	0\\
29	0\\
30	0\\
31	0\\
32	0\\
33	0\\
34	0\\
35	0\\
36	0\\
37	0\\
38	0\\
39	0\\
40	0\\
41	0\\
42	0\\
43	0\\
44	0\\
45	0\\
46	0\\
47	0\\
48	0\\
49	0\\
50	0\\
51	0\\
52	0\\
53	0\\
54	0\\
55	0\\
56	0\\
57	0\\
58	0\\
59	0\\
60	0\\
61	0\\
62	0\\
63	0\\
64	0\\
65	0\\
66	0\\
67	0\\
68	0\\
69	0\\
70	0\\
71	0\\
72	0\\
73	0\\
74	0\\
75	0\\
76	0\\
77	0\\
78	0\\
79	0\\
80	0\\
81	0\\
82	0\\
83	0\\
84	0\\
85	0\\
86	0\\
87	0\\
88	0\\
89	0\\
90	0\\
};
\addplot[forget plot, color=white!15!black] table[row sep=crcr] {%
0	0\\
100	0\\
};
\addlegendentry{\eqref{eq:1normrelax}}

\end{axis}
\end{tikzpicture}
	\end{center}
	\caption{Left: Noise level $\|\epsilon\|$ versus distance $\|\x-\x_S\|$ between the obtained solution $\x$ and the oracle solution $\x_S$ for the three methods \eqref{eq:myrelax},\eqref{eq:newrelax} and \eqref{eq:1normrelax}. Right, histogram of the number of non-zero elements in the obtained solutions.}
	\label{fig:hist}
\end{figure}

\subsection{Point-set Registration with Outliers}

Next we consider registration of 2D point clouds. 
We assume that we have a set of model points $\{\p_i\}_{i=1}^N$ that should be registered to $\{\q_i\}_{i=1}^N$ by minimizing $\sum_{i=1}^N\left\|sR \p_i+\t-\q_i\right\|^2$  . Here $sR$ is a scaled rotation of the form $\left(\begin{matrix}
a & -b \\
b & a
\end{matrix}\right)$ and $t\in \R^2$ is a translation vector. Since the residuals are linear in the parameters $a,b,\t$, we can by column-stacking them write the problem as $\|M\y-\v \|^2$, where the vector $\y$ contains the unknowns $a,b,\t$. We further assume that the point matches contain outliers that needs to be removed. Therefore we add a sparse vector $\x$ whose non-zero entries allows the solution to have large errors. We thus want to solve
\begin{equation}
\min_{\x,\y}\mu \|\x\|_0 + \|M\y-\v+\x\|^2.
\label{eq:orgobjreg}
\end{equation}
The minimization over $\y$ can be carried out in closed form by noting that $\y = (M^T M)^{-1}M^T(\v-\x)$. Inserting into \eqref{eq:orgobjreg} which gives the objective function \eqref{eq:sparsity}, where $A = I - M(M^TM)^{-1}M^T$ and 
$\b = A\v$. The matrix $A$ is a projection onto the complement of the column space of $M$, and therefore has a 4 dimensional null space. 

Figure~\ref{fig:syntreg} shows the results of a synthetic experiment with $500$ problem instances. 
The data was generated by first selecting $100$ random Gaussian 2D points. 
We then divided these into two groups of $60$ and $40$ respectively and transformed these using two different random similarity transformations. This way the data supports two strong hypotheses which yields a problem which is much more difficult than what adding random uniformly distributed outliers does. The transformations were generated by taking $a$ and $b$ to be Gaussian with mean $0$ and variance $1$, and selecting $\t$ to be 2D-Gaussian with mean $(0,0)$ and covariance $5I$. 
We compare the three relaxations \eqref{eq:1normrelax} with $\lambda = 2$, \eqref{eq:myrelax} with $\mu=1$ and \eqref{eq:newrelax} with $\mu=1$ and $\lambda=0.5$. (The reason for using $\lambda = 2$ in \eqref{eq:1normrelax} and $\mu=1$ in \eqref{eq:myrelax} is that this gives the same threshold in the corresponding proximal operators.)

All methods where initialized with the least squares solution $\min_x\|A \x - \b\|^2$.
In the left histogram of Figure~\ref{fig:syntreg} we plot the data fit with respect to the inlier residuals (corresponding to the first 60 points, that supports the larger hypothesis). In the right one we plot the number of residuals determined to be outliers.
When starting from the least squares initialization the formulation \eqref{eq:myrelax} frequently gets stuck in solutions with poor data fit that are dense and close to the least squares solution. However when it converges to the correct solution it gives a much better data fit then the $\ell^1$ norm formulation \eqref{eq:1normrelax} due to its lack of bias. The added $\ell^1$ term to helps  \eqref{eq:newrelax} converge to the correct solution with a good data fit. Note that the number of outliers are in many cases smaller than $40$ due to the randomness of the data.
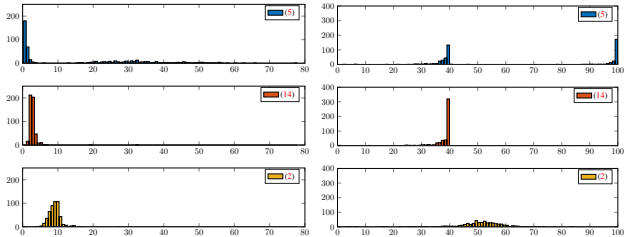
\begin{figure}[htb]
	\begin{center}
		\resizebox{41mm}{!}{
%
%
\definecolor{mycolor1}{rgb}{0.00000,0.44700,0.74100}%
\definecolor{mycolor2}{rgb}{0.85000,0.32500,0.09800}%
\definecolor{mycolor3}{rgb}{0.92900,0.69400,0.12500}%
\begin{tikzpicture}
\def\barwidth{3}
\begin{axis}[%
width=4.521in,
height=0.944in,
at={(0.758in,3.103in)},
scale only axis,
bar shift auto,
xmin=0,
xmax=80,
ymin=0,
ymax=250,
axis background/.style={fill=white},
legend style={legend cell align=left, align=left, draw=white!15!black}
]
\addplot[ybar, bar width=\barwidth, fill=mycolor1, draw=black, area legend] table[row sep=crcr] {%
0.651452948866196	180\\
1.42689805335731	69\\
2.20234315784842	15\\
2.97778826233953	5\\
3.75323336683064	2\\
4.52867847132175	0\\
5.30412357581286	0\\
6.07956868030397	1\\
6.85501378479508	0\\
7.63045888928619	0\\
8.4059039937773	0\\
9.18134909826841	0\\
9.95679420275952	0\\
10.7322393072506	0\\
11.5076844117417	0\\
12.2831295162329	0\\
13.058574620724	1\\
13.8340197252151	0\\
14.6094648297062	0\\
15.3849099341973	1\\
16.1603550386884	2\\
16.9358001431795	3\\
17.7112452476706	0\\
18.4866903521617	3\\
19.2621354566528	4\\
20.037580561144	5\\
20.8130256656351	8\\
21.5884707701262	1\\
22.3639158746173	5\\
23.1393609791084	6\\
23.9148060835995	5\\
24.6902511880906	8\\
25.4656962925817	4\\
26.2411413970728	10\\
27.016586501564	6\\
27.7920316060551	4\\
28.5674767105462	5\\
29.3429218150373	9\\
30.1183669195284	5\\
30.8938120240195	10\\
31.6692571285106	8\\
32.4447022330017	13\\
33.2201473374928	5\\
33.9955924419839	5\\
34.7710375464751	7\\
35.5464826509662	7\\
36.3219277554573	3\\
37.0973728599484	3\\
37.8728179644395	7\\
38.6482630689306	3\\
39.4237081734217	2\\
40.1991532779128	3\\
40.9745983824039	3\\
41.7500434868951	3\\
42.5254885913862	1\\
43.3009336958773	2\\
44.0763788003684	4\\
44.8518239048595	3\\
45.6272690093506	6\\
46.4027141138417	4\\
47.1781592183328	3\\
47.9536043228239	1\\
48.729049427315	1\\
49.5044945318062	3\\
50.2799396362973	4\\
51.0553847407884	4\\
51.8308298452795	1\\
52.6062749497706	1\\
53.3817200542617	1\\
54.1571651587528	1\\
54.9326102632439	1\\
55.708055367735	4\\
56.4835004722261	1\\
57.2589455767173	0\\
58.0343906812084	1\\
58.8098357856995	0\\
59.5852808901906	0\\
60.3607259946817	1\\
61.1361710991728	0\\
61.9116162036639	0\\
62.687061308155	1\\
63.4625064126461	0\\
64.2379515171372	0\\
65.0133966216284	0\\
65.7888417261195	1\\
66.5642868306106	0\\
67.3397319351017	0\\
68.1151770395928	0\\
68.8906221440839	0\\
69.666067248575	0\\
70.4415123530661	0\\
71.2169574575572	0\\
71.9924025620484	0\\
72.7678476665395	0\\
73.5432927710306	0\\
74.3187378755217	0\\
75.0941829800128	0\\
75.8696280845039	0\\
76.645073188995	0\\
77.4205182934861	1\\
};
\addplot[forget plot, color=white!15!black] table[row sep=crcr] {%
0	0\\
80	0\\
};
\addlegendentry{\eqref{eq:myrelax}}

\end{axis}

\begin{axis}[%
width=4.521in,
height=0.944in,
at={(0.758in,1.792in)},
scale only axis,
bar shift auto,
xmin=0,
xmax=80,
ymin=0,
ymax=250,
axis background/.style={fill=white},
legend style={legend cell align=left, align=left, draw=white!15!black}
]
\addplot[ybar, bar width=\barwidth, fill=mycolor2, draw=black, area legend] table[row sep=crcr] {%
0.651452948866196	0\\
1.42689805335731	16\\
2.20234315784842	212\\
2.97778826233953	203\\
3.75323336683064	47\\
4.52867847132175	8\\
5.30412357581286	10\\
6.07956868030397	1\\
6.85501378479508	1\\
7.63045888928619	0\\
8.4059039937773	0\\
9.18134909826841	0\\
9.95679420275952	0\\
10.7322393072506	0\\
11.5076844117417	0\\
12.2831295162329	0\\
13.058574620724	0\\
13.8340197252151	0\\
14.6094648297062	0\\
15.3849099341973	0\\
16.1603550386884	0\\
16.9358001431795	0\\
17.7112452476706	0\\
18.4866903521617	0\\
19.2621354566528	1\\
20.037580561144	0\\
20.8130256656351	0\\
21.5884707701262	0\\
22.3639158746173	0\\
23.1393609791084	0\\
23.9148060835995	0\\
24.6902511880906	0\\
25.4656962925817	0\\
26.2411413970728	0\\
27.016586501564	0\\
27.7920316060551	0\\
28.5674767105462	0\\
29.3429218150373	0\\
30.1183669195284	0\\
30.8938120240195	0\\
31.6692571285106	0\\
32.4447022330017	1\\
33.2201473374928	0\\
33.9955924419839	0\\
34.7710375464751	0\\
35.5464826509662	0\\
36.3219277554573	0\\
37.0973728599484	0\\
37.8728179644395	0\\
38.6482630689306	0\\
39.4237081734217	0\\
40.1991532779128	0\\
40.9745983824039	0\\
41.7500434868951	0\\
42.5254885913862	0\\
43.3009336958773	0\\
44.0763788003684	0\\
44.8518239048595	0\\
45.6272690093506	0\\
46.4027141138417	0\\
47.1781592183328	0\\
47.9536043228239	0\\
48.729049427315	0\\
49.5044945318062	0\\
50.2799396362973	0\\
51.0553847407884	0\\
51.8308298452795	0\\
52.6062749497706	0\\
53.3817200542617	0\\
54.1571651587528	0\\
54.9326102632439	0\\
55.708055367735	0\\
56.4835004722261	0\\
57.2589455767173	0\\
58.0343906812084	0\\
58.8098357856995	0\\
59.5852808901906	0\\
60.3607259946817	0\\
61.1361710991728	0\\
61.9116162036639	0\\
62.687061308155	0\\
63.4625064126461	0\\
64.2379515171372	0\\
65.0133966216284	0\\
65.7888417261195	0\\
66.5642868306106	0\\
67.3397319351017	0\\
68.1151770395928	0\\
68.8906221440839	0\\
69.666067248575	0\\
70.4415123530661	0\\
71.2169574575572	0\\
71.9924025620484	0\\
72.7678476665395	0\\
73.5432927710306	0\\
74.3187378755217	0\\
75.0941829800128	0\\
75.8696280845039	0\\
76.645073188995	0\\
77.4205182934861	0\\
};
\addplot[forget plot, color=white!15!black] table[row sep=crcr] {%
0	0\\
80	0\\
};
\addlegendentry{\eqref{eq:newrelax}}

\end{axis}

\begin{axis}[%
width=4.521in,
height=0.944in,
at={(0.758in,0.481in)},
scale only axis,
bar shift auto,
xmin=0,
xmax=80,
ymin=0,
ymax=250,
axis background/.style={fill=white},
legend style={legend cell align=left, align=left, draw=white!15!black}
]
\addplot[ybar, bar width=\barwidth, fill=mycolor3, draw=black, area legend] table[row sep=crcr] {%
0.651452948866196	0\\
1.42689805335731	0\\
2.20234315784842	0\\
2.97778826233953	0\\
3.75323336683064	1\\
4.52867847132175	1\\
5.30412357581286	4\\
6.07956868030397	15\\
6.85501378479508	36\\
7.63045888928619	65\\
8.4059039937773	90\\
9.18134909826841	107\\
9.95679420275952	107\\
10.7322393072506	44\\
11.5076844117417	10\\
12.2831295162329	7\\
13.058574620724	2\\
13.8340197252151	4\\
14.6094648297062	5\\
15.3849099341973	0\\
16.1603550386884	1\\
16.9358001431795	0\\
17.7112452476706	0\\
18.4866903521617	0\\
19.2621354566528	0\\
20.037580561144	0\\
20.8130256656351	0\\
21.5884707701262	0\\
22.3639158746173	0\\
23.1393609791084	0\\
23.9148060835995	0\\
24.6902511880906	0\\
25.4656962925817	0\\
26.2411413970728	0\\
27.016586501564	1\\
27.7920316060551	0\\
28.5674767105462	0\\
29.3429218150373	0\\
30.1183669195284	0\\
30.8938120240195	0\\
31.6692571285106	0\\
32.4447022330017	0\\
33.2201473374928	0\\
33.9955924419839	0\\
34.7710375464751	0\\
35.5464826509662	0\\
36.3219277554573	0\\
37.0973728599484	0\\
37.8728179644395	0\\
38.6482630689306	0\\
39.4237081734217	0\\
40.1991532779128	0\\
40.9745983824039	0\\
41.7500434868951	0\\
42.5254885913862	0\\
43.3009336958773	0\\
44.0763788003684	0\\
44.8518239048595	0\\
45.6272690093506	0\\
46.4027141138417	0\\
47.1781592183328	0\\
47.9536043228239	0\\
48.729049427315	0\\
49.5044945318062	0\\
50.2799396362973	0\\
51.0553847407884	0\\
51.8308298452795	0\\
52.6062749497706	0\\
53.3817200542617	0\\
54.1571651587528	0\\
54.9326102632439	0\\
55.708055367735	0\\
56.4835004722261	0\\
57.2589455767173	0\\
58.0343906812084	0\\
58.8098357856995	0\\
59.5852808901906	0\\
60.3607259946817	0\\
61.1361710991728	0\\
61.9116162036639	0\\
62.687061308155	0\\
63.4625064126461	0\\
64.2379515171372	0\\
65.0133966216284	0\\
65.7888417261195	0\\
66.5642868306106	0\\
67.3397319351017	0\\
68.1151770395928	0\\
68.8906221440839	0\\
69.666067248575	0\\
70.4415123530661	0\\
71.2169574575572	0\\
71.9924025620484	0\\
72.7678476665395	0\\
73.5432927710306	0\\
74.3187378755217	0\\
75.0941829800128	0\\
75.8696280845039	0\\
76.645073188995	0\\
77.4205182934861	0\\
};
\addplot[forget plot, color=white!15!black] table[row sep=crcr] {%
0	0\\
80	0\\
};
\addlegendentry{\eqref{eq:1normrelax}}

\end{axis}
\end{tikzpicture}
		\resizebox{41mm}{!}{
%
%
\definecolor{mycolor1}{rgb}{0.00000,0.44700,0.74100}%
\definecolor{mycolor2}{rgb}{0.85000,0.32500,0.09800}%
\definecolor{mycolor3}{rgb}{0.92900,0.69400,0.12500}%
\begin{tikzpicture}
\def\barwidth{3}
\begin{axis}[%
width=4.521in,
height=0.944in,
at={(0.758in,3.103in)},
scale only axis,
bar shift auto,
xmin=0,
xmax=100,
ymin=0,
ymax=400,
axis background/.style={fill=white},
legend style={legend cell align=left, align=left, draw=white!15!black}
]
\addplot[ybar, bar width=\barwidth, fill=mycolor1, draw=black, area legend] table[row sep=crcr] {%
0.5	0\\
1.5	0\\
2.5	1\\
3.5	0\\
4.5	0\\
5.5	0\\
6.5	2\\
7.5	0\\
8.5	0\\
9.5	0\\
10.5	0\\
11.5	0\\
12.5	0\\
13.5	0\\
14.5	0\\
15.5	0\\
16.5	0\\
17.5	1\\
18.5	0\\
19.5	0\\
20.5	0\\
21.5	1\\
22.5	0\\
23.5	0\\
24.5	0\\
25.5	1\\
26.5	0\\
27.5	3\\
28.5	3\\
29.5	2\\
30.5	3\\
31.5	6\\
32.5	2\\
33.5	4\\
34.5	5\\
35.5	6\\
36.5	23\\
37.5	29\\
38.5	44\\
39.5	133\\
40.5	2\\
41.5	0\\
42.5	0\\
43.5	0\\
44.5	0\\
45.5	0\\
46.5	1\\
47.5	0\\
48.5	0\\
49.5	0\\
50.5	0\\
51.5	0\\
52.5	0\\
53.5	0\\
54.5	0\\
55.5	0\\
56.5	0\\
57.5	0\\
58.5	0\\
59.5	0\\
60.5	0\\
61.5	0\\
62.5	0\\
63.5	0\\
64.5	0\\
65.5	0\\
66.5	0\\
67.5	0\\
68.5	0\\
69.5	0\\
70.5	0\\
71.5	0\\
72.5	0\\
73.5	0\\
74.5	0\\
75.5	0\\
76.5	0\\
77.5	0\\
78.5	1\\
79.5	0\\
80.5	0\\
81.5	0\\
82.5	0\\
83.5	0\\
84.5	0\\
85.5	0\\
86.5	0\\
87.5	1\\
88.5	1\\
89.5	1\\
90.5	2\\
91.5	1\\
92.5	1\\
93.5	2\\
94.5	1\\
95.5	2\\
96.5	7\\
97.5	13\\
98.5	23\\
99.5	172\\
};
\addplot[forget plot, color=white!15!black] table[row sep=crcr] {%
0	0\\
100	0\\
};
\addlegendentry{\eqref{eq:myrelax}}

\end{axis}

\begin{axis}[%
width=4.521in,
height=0.944in,
at={(0.758in,1.792in)},
scale only axis,
bar shift auto,
xmin=0,
xmax=100,
ymin=0,
ymax=400,
axis background/.style={fill=white},
legend style={legend cell align=left, align=left, draw=white!15!black}
]
\addplot[ybar, bar width=\barwidth, fill=mycolor2, draw=black, area legend] table[row sep=crcr] {%
0.5	3\\
1.5	0\\
2.5	0\\
3.5	0\\
4.5	0\\
5.5	1\\
6.5	0\\
7.5	1\\
8.5	0\\
9.5	0\\
10.5	0\\
11.5	2\\
12.5	1\\
13.5	0\\
14.5	0\\
15.5	1\\
16.5	0\\
17.5	0\\
18.5	1\\
19.5	1\\
20.5	1\\
21.5	1\\
22.5	1\\
23.5	1\\
24.5	4\\
25.5	3\\
26.5	2\\
27.5	3\\
28.5	0\\
29.5	4\\
30.5	6\\
31.5	5\\
32.5	8\\
33.5	4\\
34.5	6\\
35.5	19\\
36.5	21\\
37.5	36\\
38.5	39\\
39.5	320\\
40.5	1\\
41.5	1\\
42.5	1\\
43.5	0\\
44.5	0\\
45.5	0\\
46.5	0\\
47.5	0\\
48.5	0\\
49.5	0\\
50.5	0\\
51.5	0\\
52.5	0\\
53.5	0\\
54.5	0\\
55.5	0\\
56.5	0\\
57.5	0\\
58.5	0\\
59.5	0\\
60.5	0\\
61.5	0\\
62.5	0\\
63.5	0\\
64.5	0\\
65.5	0\\
66.5	0\\
67.5	0\\
68.5	0\\
69.5	0\\
70.5	0\\
71.5	0\\
72.5	1\\
73.5	0\\
74.5	0\\
75.5	0\\
76.5	0\\
77.5	0\\
78.5	0\\
79.5	0\\
80.5	0\\
81.5	0\\
82.5	0\\
83.5	0\\
84.5	0\\
85.5	0\\
86.5	0\\
87.5	0\\
88.5	0\\
89.5	0\\
90.5	0\\
91.5	0\\
92.5	0\\
93.5	0\\
94.5	0\\
95.5	0\\
96.5	0\\
97.5	0\\
98.5	0\\
99.5	1\\
};
\addplot[forget plot, color=white!15!black] table[row sep=crcr] {%
0	0\\
100	0\\
};
\addlegendentry{\eqref{eq:newrelax}}

\end{axis}

\begin{axis}[%
width=4.521in,
height=0.944in,
at={(0.758in,0.481in)},
scale only axis,
bar shift auto,
xmin=0,
xmax=100,
ymin=0,
ymax=400,
axis background/.style={fill=white},
legend style={legend cell align=left, align=left, draw=white!15!black}
]
\addplot[ybar, bar width=\barwidth, fill=mycolor3, draw=black, area legend] table[row sep=crcr] {%
0.5	0\\
1.5	0\\
2.5	0\\
3.5	1\\
4.5	1\\
5.5	0\\
6.5	1\\
7.5	0\\
8.5	0\\
9.5	0\\
10.5	0\\
11.5	0\\
12.5	0\\
13.5	0\\
14.5	0\\
15.5	0\\
16.5	1\\
17.5	2\\
18.5	0\\
19.5	0\\
20.5	0\\
21.5	0\\
22.5	1\\
23.5	0\\
24.5	0\\
25.5	1\\
26.5	2\\
27.5	2\\
28.5	0\\
29.5	2\\
30.5	3\\
31.5	1\\
32.5	0\\
33.5	0\\
34.5	1\\
35.5	3\\
36.5	2\\
37.5	5\\
38.5	6\\
39.5	5\\
40.5	5\\
41.5	5\\
42.5	4\\
43.5	9\\
44.5	12\\
45.5	17\\
46.5	24\\
47.5	16\\
48.5	23\\
49.5	43\\
50.5	30\\
51.5	30\\
52.5	39\\
53.5	27\\
54.5	28\\
55.5	27\\
56.5	22\\
57.5	19\\
58.5	19\\
59.5	12\\
60.5	11\\
61.5	3\\
62.5	8\\
63.5	6\\
64.5	5\\
65.5	1\\
66.5	2\\
67.5	3\\
68.5	1\\
69.5	2\\
70.5	2\\
71.5	2\\
72.5	0\\
73.5	1\\
74.5	0\\
75.5	0\\
76.5	0\\
77.5	1\\
78.5	0\\
79.5	0\\
80.5	0\\
81.5	0\\
82.5	0\\
83.5	1\\
84.5	0\\
85.5	0\\
86.5	0\\
87.5	0\\
88.5	0\\
89.5	0\\
90.5	0\\
91.5	0\\
92.5	0\\
93.5	0\\
94.5	0\\
95.5	0\\
96.5	0\\
97.5	0\\
98.5	0\\
99.5	0\\
};
\addplot[forget plot, color=white!15!black] table[row sep=crcr] {%
0	0\\
100	0\\
};
\addlegendentry{\eqref{eq:1normrelax}}

\end{axis}
\end{tikzpicture}
	\end{center}
	\caption{Results form the synthetic registration experiment. Left: Data fit of the resulting estimation to the true lnliers. Right: Number of estimated outliers.}
	\label{fig:syntreg}
\end{figure}

We also include a few problem instances with real data. Here we matched SIFT descriptors between two images, as shown in Figure~\ref{fig:realreg}, to generate the two point sets 
$\{\p_i\}_{i=1}^N$ and $\{\q_i\}_{i=1}^N$. We then registered the points sets using the formulations
\eqref{eq:myrelax} with $\mu = 20^2$ and \eqref{eq:1normrelax} with $\lambda = 10$ 
(which in both cases corresponds to a 20 pixel outlier threshold in a $3072 \times 2048$ image).
For \eqref{eq:newrelax} we used $\mu = 20^2$ and $\lambda = 5$.
\begin{figure}[htb]
	\def\w{60mm}
	\begin{center}
		\includegraphics[width=\w]{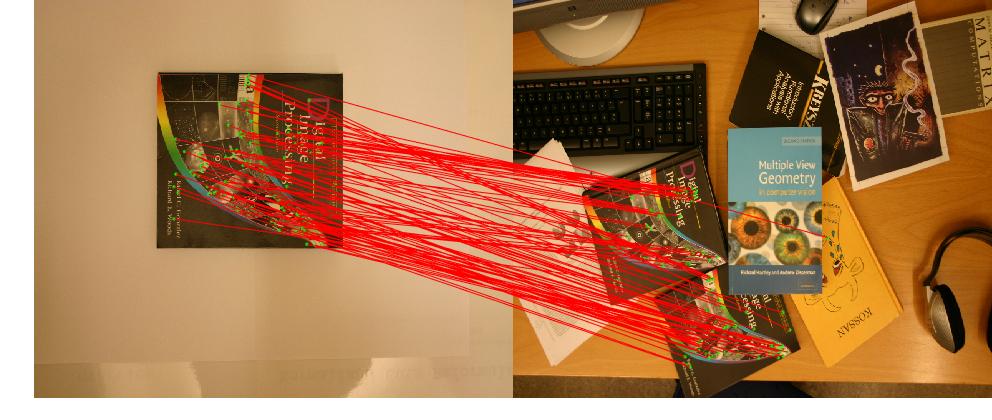}
	\end{center}
	\caption{Matches between two of the images used in Figure~\ref{fig:realregresult}.}
	\label{fig:realreg}
\end{figure}

The results are shown in Figure~\ref{fig:realreg}. In the first problem instance (first row) we used an image which generates one strong hypothesis. Here both \eqref{eq:newrelax} and \eqref{eq:1normrelax} produce good results. In contrast \eqref{eq:myrelax} immediately gets stuck in the least squares solution for which all residuals are above the threshold. In the second instance there are two strong hypotheses. The incorrect one introduces a systematic bias that effects \eqref{eq:1normrelax} more than \eqref{eq:newrelax}. As a result the registration obtained by \eqref{eq:newrelax} is better than that of \eqref{eq:myrelax} and the number of determined inliers is larger.
\begin{figure}[htb]
	\def\w{27mm}
	\begin{center}
		\includegraphics[width=\w]{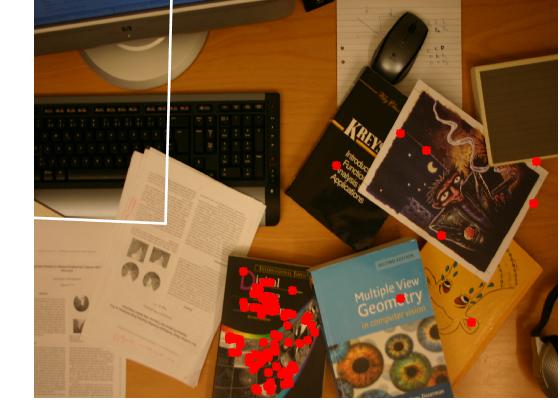}
		\includegraphics[width=\w]{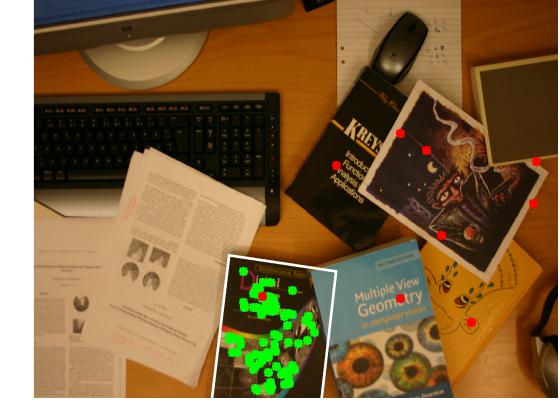}
		\includegraphics[width=\w]{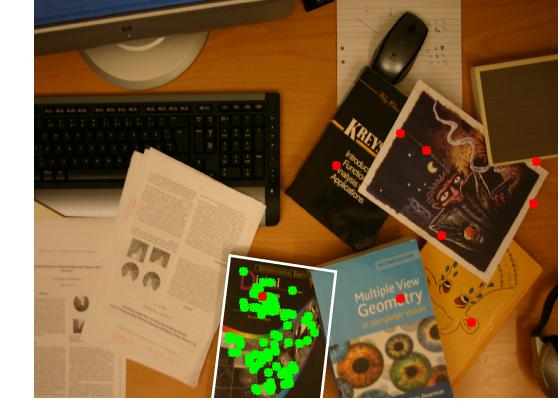}
		\includegraphics[width=\w]{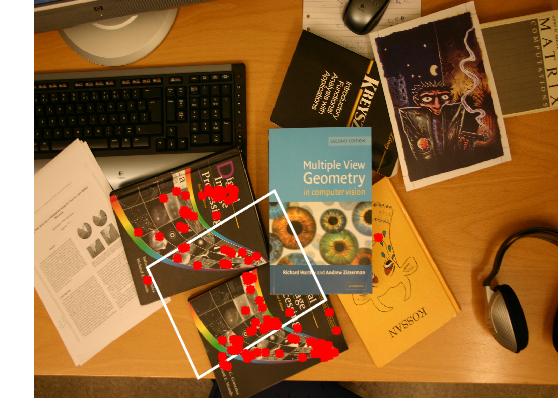}
		\includegraphics[width=\w]{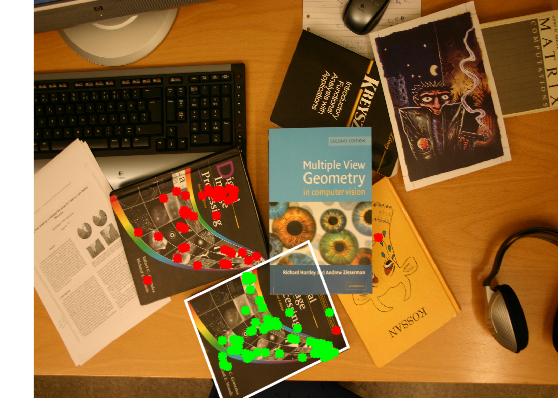}
		\includegraphics[width=\w]{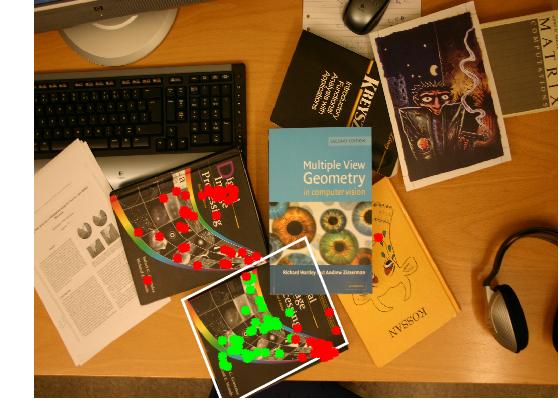}
	\end{center}
	\caption{Results form the two real registration experiment. From left to right: \eqref{eq:myrelax}, \eqref{eq:newrelax}, \eqref{eq:1normrelax}.
		Red means that the point was classified as outlier, green inlier. White frame shows registration of the model book under the estimated transformation. }
	\label{fig:realregresult}
\end{figure}

\subsection{Non-rigid Structure from Motion}\label{sec:nonrigid}
\begin{figure*}
	\begin{center}
		\def\w{9mm}
		\begin{tabular}{|c|c|c|c|}
			\hline & & &\\
			\includegraphics[width=\w]{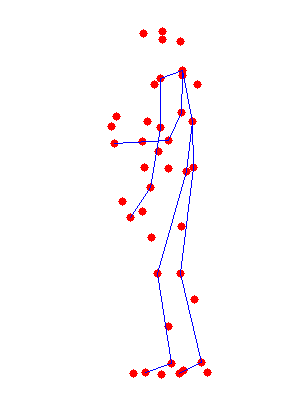}
			\includegraphics[width=\w]{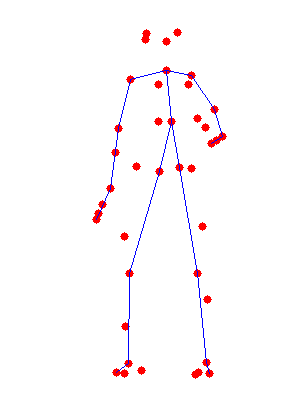}
			\includegraphics[width=\w]{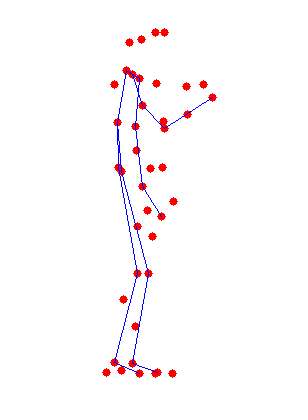}
			\includegraphics[width=\w]{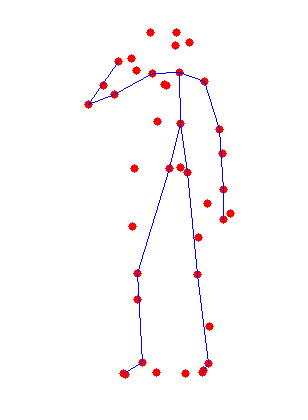}&
			\includegraphics[width=\w]{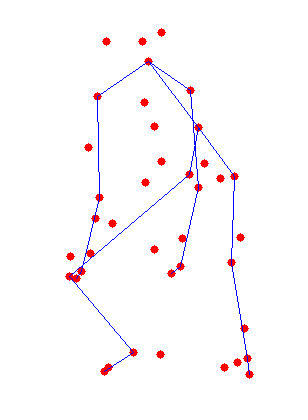}
			\includegraphics[width=\w]{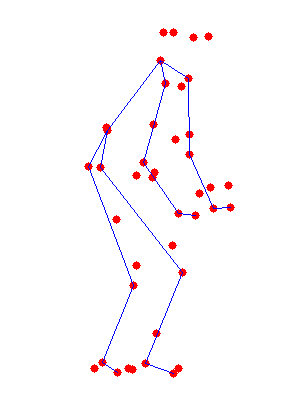}
			\includegraphics[width=\w]{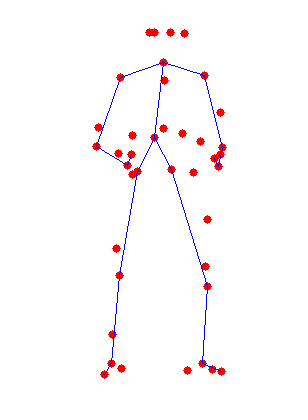}
			\includegraphics[width=\w]{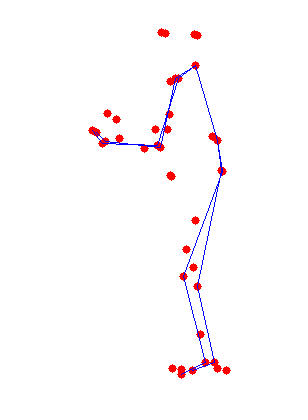}&
			\includegraphics[width=\w]{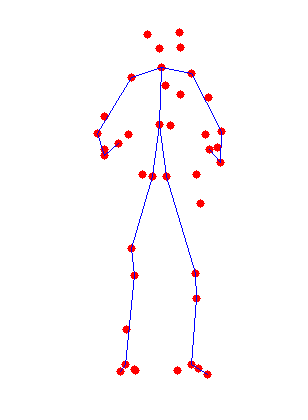}
			\includegraphics[width=\w]{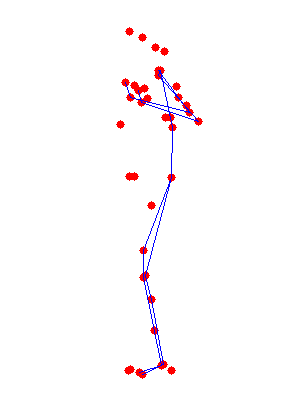}
			\includegraphics[width=\w]{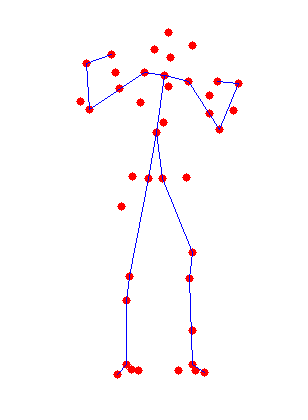}
			\includegraphics[width=\w]{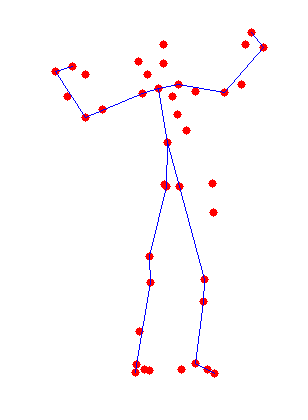}&
			\includegraphics[width=\w]{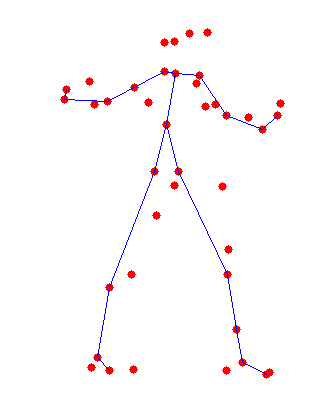}
			\includegraphics[width=\w]{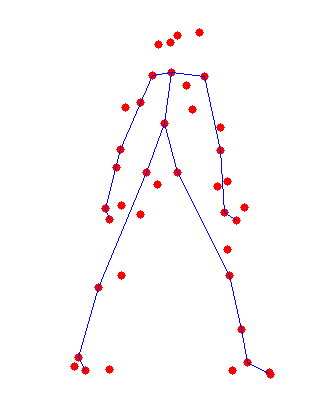}
			\includegraphics[width=\w]{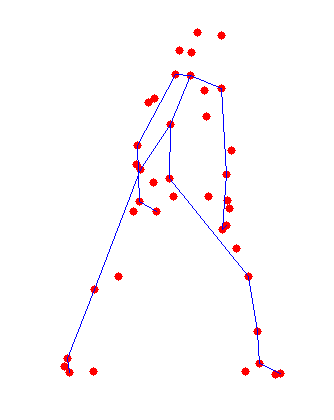}
			\includegraphics[width=\w]{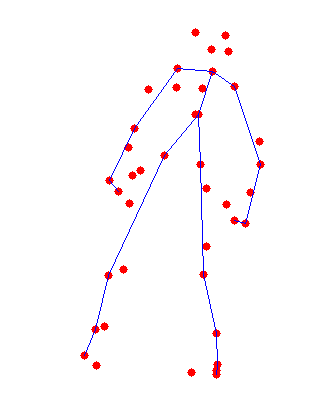}\\
			\emph{Drink} & \emph{Pick-up} &
			\emph{Stretch} & \emph{Yoga} \\
			\hline
		\end{tabular}
	\end{center}
	\caption{Four images from each of the MOCAP data sets.}
	\label{fig:mocapshapes}
	\begin{center}
		\def\w{39mm}
		\resizebox{\w}{!}{
%
%
\begin{tikzpicture}

\begin{axis}[%
width=4.52083333333333in,
height=3.565625in,
scale only axis,
xmin=10,
xmax=100,
ymin=0,
ymax=120,
legend style={draw=black,fill=white,legend cell align=left}
]
\addplot [
color=blue,
solid,
line width=2.0pt
]
table[row sep=crcr]{
10 2.2487218381549\\
11.8367346938776 2.35096121475605\\
13.6734693877551 2.26860270673283\\
15.5102040816327 2.31770680819378\\
17.3469387755102 3.77212921042247\\
19.1836734693878 3.27181416110025\\
21.0204081632653 4.97858207697487\\
22.8571428571429 11.20367232543\\
24.6938775510204 10.4301608854468\\
26.530612244898 10.5847667514785\\
28.3673469387755 10.390808097022\\
30.2040816326531 10.4331855878288\\
32.0408163265306 10.372510131131\\
33.8775510204082 10.2420236697738\\
35.7142857142857 10.3415130583021\\
37.5510204081633 10.3031748604013\\
39.3877551020408 11.7990471364654\\
41.2244897959184 17.0231023182291\\
43.0612244897959 17.0231023182291\\
44.8979591836735 49.9309849275814\\
46.734693877551 49.9309849275814\\
48.5714285714286 49.9309849275814\\
50.4081632653061 49.9309849275814\\
52.2448979591837 49.9309849275814\\
54.0816326530612 49.9309849275814\\
55.9183673469388 49.9309849275814\\
57.7551020408163 49.9309849275814\\
59.5918367346939 49.9309849275814\\
61.4285714285714 49.9309849275814\\
63.265306122449 49.9309849275814\\
65.1020408163265 49.9309849275814\\
66.9387755102041 49.9309849275814\\
68.7755102040816 49.9309849275814\\
70.6122448979592 49.9309849275814\\
72.4489795918367 49.9309849275814\\
74.2857142857143 49.9309849275814\\
76.1224489795918 49.9309849275814\\
77.9591836734694 49.9309849275814\\
79.7959183673469 49.9309849275814\\
81.6326530612245 49.9309849275814\\
83.469387755102 49.9309849275814\\
85.3061224489796 49.9309849275814\\
87.1428571428571 49.9309849275814\\
88.9795918367347 49.9309849275814\\
90.8163265306122 49.9309849275814\\
92.6530612244898 49.9309849275814\\
94.4897959183673 49.9309849275814\\
96.3265306122449 49.9309849275814\\
98.1632653061225 49.9309849275814\\
100 49.9309849275814\\
};
\addlegendentry{\eqref{eq:Srelax}};

\addplot [
color=green!50!black,
solid,
line width=2.0pt
]
table[row sep=crcr]{
10 12.2685108896105\\
11.8367346938776 17.4788856172772\\
13.6734693877551 17.4788856172772\\
15.5102040816327 17.4788856172772\\
17.3469387755102 17.4788856172772\\
19.1836734693878 17.4788856172772\\
21.0204081632653 17.4788856172772\\
22.8571428571429 17.4788856172773\\
24.6938775510204 17.4788856172773\\
26.530612244898 17.4788856172772\\
28.3673469387755 17.4788856172772\\
30.2040816326531 17.4788856172772\\
32.0408163265306 17.4788856172772\\
33.8775510204082 17.4788856172772\\
35.7142857142857 17.4788856172772\\
37.5510204081633 17.4788856172772\\
39.3877551020408 17.4788856172772\\
41.2244897959184 17.4788856172772\\
43.0612244897959 50.0012055065942\\
44.8979591836735 50.0012055065942\\
46.734693877551 50.0012055065942\\
48.5714285714286 50.0012055065942\\
50.4081632653061 50.0012055065942\\
52.2448979591837 50.0012055065942\\
54.0816326530612 50.0012055065942\\
55.9183673469388 50.0012055065942\\
57.7551020408163 50.0012055065942\\
59.5918367346939 50.0012055065942\\
61.4285714285714 50.0012055065942\\
63.265306122449 50.0012055065942\\
65.1020408163265 50.0012055065942\\
66.9387755102041 50.0012055065942\\
68.7755102040816 50.0012055065942\\
70.6122448979592 50.0012055065942\\
72.4489795918367 50.0012055065942\\
74.2857142857143 50.0012055065942\\
76.1224489795918 50.0012055065942\\
77.9591836734694 50.0012055065942\\
79.7959183673469 50.0012055065942\\
81.6326530612245 50.0012055065942\\
83.469387755102 50.0012055065942\\
85.3061224489796 50.0012055065942\\
87.1428571428571 50.0012055065942\\
88.9795918367347 50.0012055065942\\
90.8163265306122 50.0012055065942\\
92.6530612244898 50.0012055065942\\
94.4897959183673 50.0012055065942\\
96.3265306122449 50.0012055065942\\
98.1632653061225 50.0012055065942\\
100 50.0012055065942\\
};
\addlegendentry{\eqref{eq:crossover}};

\addplot [
color=red,
solid,
line width=2.0pt
]
table[row sep=crcr]{
10 22.3561799641621\\
11.8367346938776 24.9775553134997\\
13.6734693877551 27.0623587655142\\
15.5102040816327 29.2490275853574\\
17.3469387755102 31.5144399762911\\
19.1836734693878 33.841271633357\\
21.0204081632653 36.2164798369986\\
22.8571428571429 38.6301699052407\\
24.6938775510204 41.07476681512\\
26.530612244898 43.5444149865524\\
28.3673469387755 46.0345435922838\\
30.2040816326531 48.5415504447758\\
32.0408163265306 51.0625705624166\\
33.8775510204082 53.5953053463799\\
35.7142857142857 56.137895376005\\
37.5510204081633 58.6888248207992\\
39.3877551020408 61.2468489571058\\
41.2244897959184 63.8109387150185\\
43.0612244897959 66.3802378883346\\
44.8979591836735 68.6734271977859\\
46.734693877551 69.9999492075949\\
48.5714285714286 71.3522438790241\\
50.4081632653061 72.7287977548786\\
52.2448979591837 74.1281866224119\\
54.0816326530612 75.5490717893563\\
55.9183673469388 76.9901961343908\\
57.7551020408163 78.4503800335659\\
59.5918367346939 79.9285172442795\\
61.4285714285714 81.4235708112265\\
63.265306122449 82.9345690441588\\
65.1020408163265 84.4606016050976\\
66.9387755102041 86.0008157324295\\
68.7755102040816 87.5544126211267\\
70.6122448979592 89.1206439715389\\
72.4489795918367 90.6988087138813\\
74.2857142857143 92.2882499113406\\
76.1224489795918 93.8883518414067\\
77.9591836734694 95.4985372525707\\
79.7959183673469 97.1182647916821\\
81.6326530612245 98.7470265958947\\
83.469387755102 100.384346042243\\
85.3061224489796 102.029775647285\\
87.1428571428571 103.682895108907\\
88.9795918367347 105.343309482281\\
90.8163265306122 107.010647481988\\
92.6530612244898 108.684559902429\\
94.4897959183673 110.364718148917\\
96.3265306122449 112.050812872095\\
98.1632653061225 113.74255269868\\
100 115.43966305186\\
};
\addlegendentry{\eqref{eq:nuclearrelax}};

\end{axis}
\end{tikzpicture}
		\resizebox{\w}{!}{
%
%
\begin{tikzpicture}

\begin{axis}[%
width=4.52083333333333in,
height=3.565625in,
scale only axis,
xmin=10,
xmax=100,
ymin=0,
ymax=120,
legend style={draw=black,fill=white,legend cell align=left}
]
\addplot [
color=blue,
solid,
line width=2.0pt
]
table[row sep=crcr]{
10 2.51586680935418\\
11.8367346938776 4.79570029262791\\
13.6734693877551 6.58219966488478\\
15.5102040816327 6.56982910549078\\
17.3469387755102 6.55502347049673\\
19.1836734693878 6.56283110928307\\
21.0204081632653 7.60222195621941\\
22.8571428571429 7.51821023551489\\
24.6938775510204 8.69739436168837\\
26.530612244898 8.69739436168838\\
28.3673469387755 25.3253540415262\\
30.2040816326531 25.3253540415262\\
32.0408163265306 25.3253540415261\\
33.8775510204082 25.3253540415262\\
35.7142857142857 46.0445806507741\\
37.5510204081633 46.0445806507741\\
39.3877551020408 46.0445806507741\\
41.2244897959184 46.0445806507741\\
43.0612244897959 46.0445806507741\\
44.8979591836735 46.0445806507741\\
46.734693877551 46.0445806507741\\
48.5714285714286 46.0445806507741\\
50.4081632653061 46.0445806507741\\
52.2448979591837 46.0445806507741\\
54.0816326530612 46.0445806507741\\
55.9183673469388 46.0445806507741\\
57.7551020408163 46.0445806507741\\
59.5918367346939 46.0445806507741\\
61.4285714285714 46.0445806507741\\
63.265306122449 46.0445806507741\\
65.1020408163265 46.0445806507741\\
66.9387755102041 46.0445806507741\\
68.7755102040816 46.0445806507741\\
70.6122448979592 46.0445806507741\\
72.4489795918367 46.0445806507741\\
74.2857142857143 46.0445806507741\\
76.1224489795918 46.0445806507741\\
77.9591836734694 46.0445806507741\\
79.7959183673469 46.0445806507741\\
81.6326530612245 46.0445806507741\\
83.469387755102 46.0445806507741\\
85.3061224489796 46.0445806507741\\
87.1428571428571 46.0445806507741\\
88.9795918367347 46.0445806507741\\
90.8163265306122 46.0445806507741\\
92.6530612244898 46.0445806507741\\
94.4897959183673 46.0445806507741\\
96.3265306122449 46.0445806507741\\
98.1632653061225 46.0445806507741\\
100 46.0445806507741\\
};
\addlegendentry{\eqref{eq:Srelax}};

\addplot [
color=green!50!black,
solid,
line width=2.0pt
]
table[row sep=crcr]{
10 10.3632649154152\\
11.8367346938776 10.3632649154153\\
13.6734693877551 10.3632649154153\\
15.5102040816327 10.3632649154153\\
17.3469387755102 10.3632649154152\\
19.1836734693878 25.6764053861315\\
21.0204081632653 25.6764053861315\\
22.8571428571429 25.6764053861315\\
24.6938775510204 25.6764053861316\\
26.530612244898 25.6764053861315\\
28.3673469387755 25.6764053861315\\
30.2040816326531 25.6764053861315\\
32.0408163265306 25.6764053861315\\
33.8775510204082 46.1275586638643\\
35.7142857142857 46.1275586638643\\
37.5510204081633 46.1275586638643\\
39.3877551020408 46.1275586638643\\
41.2244897959184 46.1275586638643\\
43.0612244897959 46.1275586638643\\
44.8979591836735 46.1275586638643\\
46.734693877551 46.1275586638643\\
48.5714285714286 46.1275586638643\\
50.4081632653061 46.1275586638643\\
52.2448979591837 46.1275586638643\\
54.0816326530612 46.1275586638643\\
55.9183673469388 46.1275586638643\\
57.7551020408163 46.1275586638643\\
59.5918367346939 46.1275586638643\\
61.4285714285714 46.1275586638643\\
63.265306122449 46.1275586638643\\
65.1020408163265 46.1275586638643\\
66.9387755102041 46.1275586638643\\
68.7755102040816 46.1275586638643\\
70.6122448979592 46.1275586638643\\
72.4489795918367 46.1275586638643\\
74.2857142857143 46.1275586638643\\
76.1224489795918 46.1275586638643\\
77.9591836734694 46.1275586638643\\
79.7959183673469 46.1275586638643\\
81.6326530612245 46.1275586638643\\
83.469387755102 46.1275586638643\\
85.3061224489796 46.1275586638643\\
87.1428571428571 46.1275586638643\\
88.9795918367347 46.1275586638643\\
90.8163265306122 46.1275586638643\\
92.6530612244898 46.1275586638643\\
94.4897959183673 46.1275586638643\\
96.3265306122449 46.1275586638643\\
98.1632653061225 46.1275586638643\\
100 46.1275586638643\\
};
\addlegendentry{\eqref{eq:crossover}};

\addplot [
color=red,
solid,
line width=2.0pt
]
table[row sep=crcr]{
10 22.462967394336\\
11.8367346938776 25.7316297831663\\
13.6734693877551 28.996007163981\\
15.5102040816327 32.2468702505701\\
17.3469387755102 35.4837267409966\\
19.1836734693878 38.7092726342738\\
21.0204081632653 41.5850624785141\\
22.8571428571429 43.7920016311301\\
24.6938775510204 46.0478600692426\\
26.530612244898 48.3450197944512\\
28.3673469387755 50.6772411829128\\
30.2040816326531 53.0393904730848\\
32.0408163265306 55.4272225678893\\
33.8775510204082 57.8372086325654\\
35.7142857142857 60.169788361565\\
37.5510204081633 61.4461915484647\\
39.3877551020408 62.7554855167297\\
41.2244897959184 64.0954362642709\\
43.0612244897959 65.4639554763518\\
44.8979591836735 66.8590948933999\\
46.734693877551 68.2790398841924\\
48.5714285714286 69.7221025198724\\
50.4081632653061 71.1867143852437\\
52.2448979591837 72.6714193126183\\
54.0816326530612 74.1748661800743\\
55.9183673469388 75.6958018798813\\
57.7551020408163 77.2330645332425\\
59.5918367346939 78.7855770037134\\
61.4285714285714 80.3523407427553\\
63.265306122449 81.93242998615\\
65.1020408163265 83.524986308677\\
66.9387755102041 85.1292135359811\\
68.7755102040816 86.7443730062863\\
70.6122448979592 88.3697791701841\\
72.4489795918367 90.0047955137004\\
74.2857142857143 91.6488307879144\\
76.1224489795918 93.3013355273214\\
77.9591836734694 94.9617988386922\\
79.7959183673469 96.6297454422198\\
81.6326530612245 98.3047329470908\\
83.469387755102 99.9863493442644\\
85.3061224489796 101.6742107\\
87.1428571428571 103.367959034541\\
88.9795918367347 105.067260371332\\
90.8163265306122 106.771802943036\\
92.6530612244898 108.481295541658\\
94.4897959183673 110.195466000915\\
96.3265306122449 111.91405979995\\
98.1632653061225 113.636838778331\\
100 115.363579953041\\
};
\addlegendentry{\eqref{eq:nuclearrelax}};

\end{axis}
\end{tikzpicture}
		\resizebox{\w}{!}{
%
%
\begin{tikzpicture}

\begin{axis}[%
width=4.52083333333333in,
height=3.565625in,
scale only axis,
xmin=10,
xmax=100,
ymin=0,
ymax=120,
legend style={draw=black,fill=white,legend cell align=left}
]
\addplot [
color=blue,
solid,
line width=2.0pt
]
table[row sep=crcr]{
10 7.21002344060896\\
11.8367346938776 5.61573103776516\\
13.6734693877551 7.05016869250965\\
15.5102040816327 7.02996708299591\\
17.3469387755102 7.02964017810828\\
19.1836734693878 8.66031446598215\\
21.0204081632653 8.54284157596862\\
22.8571428571429 8.58222943521243\\
24.6938775510204 10.5920005649657\\
26.530612244898 21.8000191367291\\
28.3673469387755 21.8000191367291\\
30.2040816326531 21.8000191367291\\
32.0408163265306 21.8000191367291\\
33.8775510204082 21.8000191367291\\
35.7142857142857 21.8000191367291\\
37.5510204081633 21.8000191367291\\
39.3877551020408 21.8000191367291\\
41.2244897959184 21.800019136729\\
43.0612244897959 21.800019136729\\
44.8979591836735 21.8000191367291\\
46.734693877551 21.8000191367291\\
48.5714285714286 21.8000191367291\\
50.4081632653061 55.0723111996961\\
52.2448979591837 55.0723111996961\\
54.0816326530612 55.0723111996961\\
55.9183673469388 55.0723111996961\\
57.7551020408163 55.0723111996961\\
59.5918367346939 55.0723111996961\\
61.4285714285714 55.0723111996961\\
63.265306122449 55.0723111996961\\
65.1020408163265 55.0723111996961\\
66.9387755102041 55.0723111996961\\
68.7755102040816 55.0723111996961\\
70.6122448979592 55.0723111996961\\
72.4489795918367 55.0723111996961\\
74.2857142857143 55.0723111996961\\
76.1224489795918 55.0723111996961\\
77.9591836734694 55.0723111996961\\
79.7959183673469 55.0723111996961\\
81.6326530612245 55.0723111996961\\
83.469387755102 55.0723111996961\\
85.3061224489796 55.0723111996961\\
87.1428571428571 55.0723111996961\\
88.9795918367347 55.0723111996961\\
90.8163265306122 55.0723111996961\\
92.6530612244898 55.0723111996961\\
94.4897959183673 55.0723111996961\\
96.3265306122449 55.0723111996961\\
98.1632653061225 55.0723111996961\\
100 55.0723111996961\\
};
\addlegendentry{\eqref{eq:Srelax}};

\addplot [
color=green!50!black,
solid,
line width=2.0pt
]
table[row sep=crcr]{
10 11.762626289009\\
11.8367346938776 11.7626262890091\\
13.6734693877551 11.7626262890091\\
15.5102040816327 11.7626262890091\\
17.3469387755102 22.1454410709952\\
19.1836734693878 22.1454410709952\\
21.0204081632653 22.1454410709952\\
22.8571428571429 22.1454410709952\\
24.6938775510204 22.1454410709952\\
26.530612244898 22.1454410709952\\
28.3673469387755 22.1454410709952\\
30.2040816326531 22.1454410709952\\
32.0408163265306 22.1454410709952\\
33.8775510204082 22.1454410709952\\
35.7142857142857 22.1454410709952\\
37.5510204081633 22.1454410709952\\
39.3877551020408 22.1454410709952\\
41.2244897959184 22.1454410709952\\
43.0612244897959 22.1454410709952\\
44.8979591836735 22.1454410709952\\
46.734693877551 55.1383445284065\\
48.5714285714286 55.1383445284065\\
50.4081632653061 55.1383445284065\\
52.2448979591837 55.1383445284065\\
54.0816326530612 55.1383445284065\\
55.9183673469388 55.1383445284065\\
57.7551020408163 55.1383445284065\\
59.5918367346939 55.1383445284065\\
61.4285714285714 55.1383445284065\\
63.265306122449 55.1383445284065\\
65.1020408163265 55.1383445284065\\
66.9387755102041 55.1383445284065\\
68.7755102040816 55.1383445284065\\
70.6122448979592 55.1383445284065\\
72.4489795918367 55.1383445284065\\
74.2857142857143 55.1383445284065\\
76.1224489795918 55.1383445284065\\
77.9591836734694 55.1383445284065\\
79.7959183673469 55.1383445284065\\
81.6326530612245 55.1383445284065\\
83.469387755102 55.1383445284065\\
85.3061224489796 55.1383445284065\\
87.1428571428571 55.1383445284065\\
88.9795918367347 55.1383445284065\\
90.8163265306122 55.1383445284065\\
92.6530612244898 55.1383445284065\\
94.4897959183673 55.1383445284065\\
96.3265306122449 55.1383445284065\\
98.1632653061225 55.1383445284065\\
100 55.1383445284065\\
};
\addlegendentry{\eqref{eq:crossover}};

\addplot [
color=red,
solid,
line width=2.0pt
]
table[row sep=crcr]{
10 22.7617835664125\\
11.8367346938776 26.0211544385053\\
13.6734693877551 29.3356620272143\\
15.5102040816327 32.1564142817628\\
17.3469387755102 34.2177027294826\\
19.1836734693878 36.3606421505411\\
21.0204081632653 38.5703717131744\\
22.8571428571429 40.8350207670937\\
24.6938775510204 43.145092985927\\
26.530612244898 45.4929651030055\\
28.3673469387755 47.872488617937\\
30.2040816326531 50.2786774184008\\
32.0408163265306 52.7074643106233\\
33.8775510204082 55.1555116243782\\
35.7142857142857 57.6200637555786\\
37.5510204081633 60.0988320516436\\
39.3877551020408 62.5899045975293\\
41.2244897959184 65.0916751888079\\
43.0612244897959 67.6027871263302\\
44.8979591836735 70.122088501607\\
46.734693877551 72.6485964291448\\
48.5714285714286 75.1814682783806\\
50.4081632653061 76.6049996965669\\
52.2448979591837 77.9419336911464\\
54.0816326530612 79.3008802947928\\
55.9183673469388 80.6806381127551\\
57.7551020408163 82.0800734371979\\
59.5918367346939 83.4981172654818\\
61.4285714285714 84.9337622500801\\
63.265306122449 86.3860596290275\\
65.1020408163265 87.8541161761493\\
66.9387755102041 89.3370912020101\\
68.7755102040816 90.8341936294655\\
70.6122448979592 92.344679161707\\
72.4489795918367 93.8678475557821\\
74.2857142857143 95.4030400104324\\
76.1224489795918 96.9496366737723\\
77.9591836734694 98.5070542736351\\
79.7959183673469 100.074743871259\\
81.6326530612245 101.652188737312\\
83.469387755102 103.238902347934\\
85.3061224489796 104.834426497523\\
87.1428571428571 106.438329524236\\
88.9795918367347 108.050204643732\\
90.8163265306122 109.669668386268\\
92.6530612244898 111.296359132164\\
94.4897959183673 112.929935740487\\
96.3265306122449 114.570076265876\\
98.1632653061225 116.216476758437\\
100 117.868850141825\\
};
\addlegendentry{\eqref{eq:nuclearrelax}};

\end{axis}
\end{tikzpicture}
		\resizebox{\w}{!}{
%
%
\begin{tikzpicture}

\begin{axis}[%
width=4.52083333333333in,
height=3.565625in,
scale only axis,
xmin=10,
xmax=100,
ymin=0,
ymax=150,
legend style={draw=black,fill=white,legend cell align=left}
]
\addplot [
color=blue,
solid,
line width=2.0pt
]
table[row sep=crcr]{
10 7.99839477463965\\
11.8367346938776 7.12704941096283\\
13.6734693877551 7.04861137182247\\
15.5102040816327 12.5323725787507\\
17.3469387755102 12.6041732598275\\
19.1836734693878 12.5377880178974\\
21.0204081632653 12.6445281018744\\
22.8571428571429 13.1429325986281\\
24.6938775510204 13.7285334456656\\
26.530612244898 13.733043763191\\
28.3673469387755 13.7084776535877\\
30.2040816326531 18.3011755415986\\
32.0408163265306 13.8043727024673\\
33.8775510204082 13.6902401317422\\
35.7142857142857 13.9238499597067\\
37.5510204081633 13.8354869102368\\
39.3877551020408 16.0226519314855\\
41.2244897959184 16.0226519314854\\
43.0612244897959 36.3541552526101\\
44.8979591836735 36.3541552526101\\
46.734693877551 36.3541552526101\\
48.5714285714286 36.3541552526101\\
50.4081632653061 36.3541552526101\\
52.2448979591837 36.3541552526101\\
54.0816326530612 36.3541552526101\\
55.9183673469388 36.3541552526101\\
57.7551020408163 36.3541552526101\\
59.5918367346939 36.3541552526101\\
61.4285714285714 36.3541552526101\\
63.265306122449 36.3541552526101\\
65.1020408163265 36.3541552526101\\
66.9387755102041 36.3541552526101\\
68.7755102040816 36.3541552526101\\
70.6122448979592 36.3541552526101\\
72.4489795918367 36.3541552526101\\
74.2857142857143 36.3541552526101\\
76.1224489795918 36.3541552526101\\
77.9591836734694 36.3541552526101\\
79.7959183673469 36.3541552526101\\
81.6326530612245 36.3541552526101\\
83.469387755102 36.3541552526101\\
85.3061224489796 36.3541552526101\\
87.1428571428571 36.3541552526101\\
88.9795918367347 36.3541552526101\\
90.8163265306122 36.3541552526101\\
92.6530612244898 36.3541552526101\\
94.4897959183673 36.3541552526101\\
96.3265306122449 36.3541552526101\\
98.1632653061225 36.3541552526101\\
100 36.3541552526101\\
};
\addlegendentry{\eqref{eq:Srelax}};

\addplot [
color=green!50!black,
solid,
line width=2.0pt
]
table[row sep=crcr]{
10 10.307760528716\\
11.8367346938776 16.840557164525\\
13.6734693877551 16.8405432535386\\
15.5102040816327 16.8405372615766\\
17.3469387755102 16.8405346091596\\
19.1836734693878 16.8405334068842\\
21.0204081632653 16.8405328849056\\
22.8571428571429 16.84053254172\\
24.6938775510204 16.8405325417022\\
26.530612244898 16.8405325416913\\
28.3673469387755 16.84053254168\\
30.2040816326531 16.8405325416732\\
32.0408163265306 16.8405325416552\\
33.8775510204082 16.8405325416468\\
35.7142857142857 16.8405325416415\\
37.5510204081633 16.8405325416382\\
39.3877551020408 36.5426943504373\\
41.2244897959184 36.5426943504373\\
43.0612244897959 36.5426943504373\\
44.8979591836735 36.5426943504373\\
46.734693877551 36.5426943504373\\
48.5714285714286 36.5426943504373\\
50.4081632653061 36.5426943504373\\
52.2448979591837 36.5426943504373\\
54.0816326530612 36.5426943504373\\
55.9183673469388 36.5426943504372\\
57.7551020408163 36.5426943504373\\
59.5918367346939 36.5426943504373\\
61.4285714285714 36.5426943504373\\
63.265306122449 36.5426943504373\\
65.1020408163265 36.5426943504373\\
66.9387755102041 36.5426943504373\\
68.7755102040816 36.5426943504373\\
70.6122448979592 36.5426943504373\\
72.4489795918367 36.5426943504373\\
74.2857142857143 36.5426943504373\\
76.1224489795918 36.5426943504373\\
77.9591836734694 36.5426943504373\\
79.7959183673469 36.5426943504373\\
81.6326530612245 36.5426943504373\\
83.469387755102 36.5426943504373\\
85.3061224489796 36.5426943504372\\
87.1428571428571 36.5426943504372\\
88.9795918367347 36.5426943504373\\
90.8163265306122 36.5426943504373\\
92.6530612244898 36.5426943504373\\
94.4897959183673 36.5426943504372\\
96.3265306122449 36.5426943504373\\
98.1632653061225 36.5426943504373\\
100 36.5426943504373\\
};
\addlegendentry{\eqref{eq:crossover}};

\addplot [
color=red,
solid,
line width=2.0pt
]
table[row sep=crcr]{
10 24.4551089887422\\
11.8367346938776 28.2088042091209\\
13.6734693877551 31.3495630831929\\
15.5102040816327 34.3658729370791\\
17.3469387755102 37.4277880234445\\
19.1836734693878 40.5148659259673\\
21.0204081632653 43.6119737160722\\
22.8571428571429 46.7076554811422\\
24.6938775510204 49.792784618906\\
26.530612244898 52.3837837461698\\
28.3673469387755 54.2560847508776\\
30.2040816326531 56.1807612263242\\
32.0408163265306 58.1523561516002\\
33.8775510204082 60.1659882943357\\
35.7142857142857 62.2172923870356\\
37.5510204081633 64.302362168409\\
39.3877551020408 66.4176973856702\\
41.2244897959184 68.5601552694477\\
43.0612244897959 70.7269066010153\\
44.8979591836735 72.9153962417216\\
46.734693877551 75.1233078458393\\
48.5714285714286 77.3485323991194\\
50.4081632653061 79.5891401928476\\
52.2448979591837 81.8433558405817\\
54.0816326530612 84.1095359605867\\
55.9183673469388 86.3861491734751\\
57.7551020408163 88.6717580968579\\
59.5918367346939 90.9650030538988\\
61.4285714285714 93.2645872489295\\
63.265306122449 95.5692632001914\\
65.1020408163265 97.877820257329\\
66.9387755102041 100.18907307021\\
68.7755102040816 102.501850916794\\
70.6122448979592 104.814987842474\\
72.4489795918367 107.127313612664\\
74.2857142857143 109.437645535671\\
76.1224489795918 111.744781274789\\
77.9591836734694 114.047492836964\\
79.7959183673469 116.344521999191\\
81.6326530612245 118.634577509429\\
83.469387755102 120.916334470545\\
85.3061224489796 123.188436373826\\
87.1428571428571 125.449500279653\\
88.9795918367347 127.698125629733\\
90.8163265306122 129.93290709799\\
92.6530612244898 132.152451726641\\
94.4897959183673 134.355400336141\\
96.3265306122449 136.540452840813\\
98.1632653061225 138.706396663404\\
100 140.85213696416\\
};
\addlegendentry{\eqref{eq:nuclearrelax}};

\end{axis}
\end{tikzpicture}
		\resizebox{\w}{!}{
%
%
\begin{tikzpicture}

\begin{axis}[%
width=4.52083333333333in,
height=3.565625in,
scale only axis,
xmin=10,
xmax=100,
ymin=0,
ymax=250,
legend style={draw=black,fill=white,legend cell align=left}
]
\addplot [
color=blue,
solid,
line width=2.0pt
]
table[row sep=crcr]{
10 238.07220567951\\
11.8367346938776 238.186352225344\\
13.6734693877551 238.054206857414\\
15.5102040816327 240.398247740578\\
17.3469387755102 212.529203463975\\
19.1836734693878 179.179550235023\\
21.0204081632653 130.716223434094\\
22.8571428571429 111.614056869343\\
24.6938775510204 110.950970131828\\
26.530612244898 112.625452428149\\
28.3673469387755 111.825734759923\\
30.2040816326531 110.879526992931\\
32.0408163265306 110.753850623394\\
33.8775510204082 110.565782395583\\
35.7142857142857 110.467047166674\\
37.5510204081633 112.095031147929\\
39.3877551020408 43.4688285234344\\
41.2244897959184 22.8451376453004\\
43.0612244897959 22.8451376453004\\
44.8979591836735 56.4081606866325\\
46.734693877551 56.4081606866325\\
48.5714285714286 56.4081606866325\\
50.4081632653061 56.4081606866325\\
52.2448979591837 56.4081606866325\\
54.0816326530612 56.4081606866325\\
55.9183673469388 56.4081606866325\\
57.7551020408163 56.4081606866325\\
59.5918367346939 56.4081606866325\\
61.4285714285714 56.4081606866325\\
63.265306122449 56.4081606866325\\
65.1020408163265 56.4081606866325\\
66.9387755102041 56.4081606866325\\
68.7755102040816 56.4081606866325\\
70.6122448979592 56.4081606866325\\
72.4489795918367 56.4081606866325\\
74.2857142857143 56.4081606866325\\
76.1224489795918 56.4081606866325\\
77.9591836734694 56.4081606866325\\
79.7959183673469 56.4081606866325\\
81.6326530612245 56.4081606866325\\
83.469387755102 56.4081606866325\\
85.3061224489796 56.4081606866325\\
87.1428571428571 56.4081606866325\\
88.9795918367347 56.4081606866325\\
90.8163265306122 56.4081606866325\\
92.6530612244898 56.4081606866325\\
94.4897959183673 56.4081606866325\\
96.3265306122449 56.4081606866325\\
98.1632653061225 56.4081606866325\\
100 56.4081606866325\\
};
\addlegendentry{\eqref{eq:Srelax}};

\addplot [
color=green!50!black,
solid,
line width=2.0pt
]
table[row sep=crcr]{
10 18.599379267416\\
11.8367346938776 23.2688435602151\\
13.6734693877551 23.2688435602151\\
15.5102040816327 23.2688435602152\\
17.3469387755102 23.2688435602151\\
19.1836734693878 23.2688435602151\\
21.0204081632653 23.2688435602152\\
22.8571428571429 23.2688435602152\\
24.6938775510204 23.2688435602152\\
26.530612244898 23.2688435602151\\
28.3673469387755 23.2688435602151\\
30.2040816326531 23.2688435602152\\
32.0408163265306 23.2688435602151\\
33.8775510204082 23.2688435602151\\
35.7142857142857 23.2688435602151\\
37.5510204081633 23.2688435602152\\
39.3877551020408 23.2688435602152\\
41.2244897959184 23.2688435602151\\
43.0612244897959 56.4863178686485\\
44.8979591836735 56.4863178686485\\
46.734693877551 56.4863178686485\\
48.5714285714286 56.4863178686485\\
50.4081632653061 56.4863178686485\\
52.2448979591837 56.4863178686485\\
54.0816326530612 56.4863178686485\\
55.9183673469388 56.4863178686485\\
57.7551020408163 56.4863178686485\\
59.5918367346939 56.4863178686485\\
61.4285714285714 56.4863178686485\\
63.265306122449 56.4863178686485\\
65.1020408163265 56.4863178686485\\
66.9387755102041 56.4863178686485\\
68.7755102040816 56.4863178686485\\
70.6122448979592 56.4863178686485\\
72.4489795918367 56.4863178686485\\
74.2857142857143 56.4863178686485\\
76.1224489795918 56.4863178686485\\
77.9591836734694 56.4863178686485\\
79.7959183673469 56.4863178686485\\
81.6326530612245 56.4863178686485\\
83.469387755102 56.4863178686485\\
85.3061224489796 56.4863178686485\\
87.1428571428571 56.4863178686485\\
88.9795918367347 56.4863178686485\\
90.8163265306122 56.4863178686485\\
92.6530612244898 56.4863178686485\\
94.4897959183673 56.4863178686485\\
96.3265306122449 56.4863178686485\\
98.1632653061225 56.4863178686485\\
100 56.4863178686485\\
};
\addlegendentry{\eqref{eq:crossover}};

\addplot [
color=red,
solid,
line width=2.0pt
]
table[row sep=crcr]{
10 28.396533934947\\
11.8367346938776 31.0484101070555\\
13.6734693877551 33.2175558141678\\
15.5102040816327 35.4937090321627\\
17.3469387755102 37.8523177223975\\
19.1836734693878 40.2747943779386\\
21.0204081632653 42.7470321376581\\
22.8571428571429 45.258271719294\\
24.6938775510204 47.8002551108636\\
26.530612244898 50.3665997037784\\
28.3673469387755 52.9523367225948\\
30.2040816326531 55.553570281567\\
32.0408163265306 58.1672244815818\\
33.8775510204082 60.7908547299016\\
35.7142857142857 63.4225060374043\\
37.5510204081633 66.0606058420266\\
39.3877551020408 68.7038823580005\\
41.2244897959184 71.3513019215768\\
43.0612244897959 74.0020205723009\\
44.8979591836735 76.3738530994683\\
46.734693877551 77.7755744118573\\
48.5714285714286 79.2036607724644\\
50.4081632653061 80.6565016576529\\
52.2448979591837 82.1325818347974\\
54.0816326530612 83.630477321341\\
55.9183673469388 85.1488511361437\\
57.7551020408163 86.6864489422152\\
59.5918367346939 88.2420946606127\\
61.4285714285714 89.8146861185635\\
63.265306122449 91.4031907806464\\
65.1020408163265 93.0066415999544\\
66.9387755102041 94.6241330161828\\
68.7755102040816 96.2548171194604\\
70.6122448979592 97.8978999921482\\
72.4489795918367 99.552638235463\\
74.2857142857143 101.218335683672\\
76.1224489795918 102.894340305291\\
77.9591836734694 104.580041288208\\
79.7959183673469 106.274866303889\\
81.6326530612245 107.978278944306\\
83.469387755102 109.689776324485\\
85.3061224489796 111.408886842809\\
87.1428571428571 113.135168090941\\
88.9795918367347 114.868204905102\\
90.8163265306122 116.607607550414\\
92.6530612244898 118.353010030155\\
94.4897959183673 120.104068512059\\
96.3265306122449 121.860459863993\\
98.1632653061225 123.621880291708\\
100 125.388044071768\\
};
\addlegendentry{\eqref{eq:nuclearrelax}};

\end{axis}
\end{tikzpicture}
		\resizebox{\w}{!}{
%
%
\begin{tikzpicture}

\begin{axis}[%
width=4.52083333333333in,
height=3.565625in,
scale only axis,
xmin=10,
xmax=100,
ymin=0,
ymax=140,
legend style={draw=black,fill=white,legend cell align=left}
]
\addplot [
color=blue,
solid,
line width=2.0pt
]
table[row sep=crcr]{
10 85.9124487830642\\
11.8367346938776 73.0731614146766\\
13.6734693877551 59.0327524584651\\
15.5102040816327 58.5360633040702\\
17.3469387755102 58.9259519742874\\
19.1836734693878 58.0007529437822\\
21.0204081632653 33.7481459581536\\
22.8571428571429 34.4234297684639\\
24.6938775510204 12.4770498369827\\
26.530612244898 12.4770498381915\\
28.3673469387755 36.397827536368\\
30.2040816326531 36.397827536368\\
32.0408163265306 36.397827536368\\
33.8775510204082 36.397827536368\\
35.7142857142857 56.4339484691543\\
37.5510204081633 56.4339484691543\\
39.3877551020408 56.4339484691543\\
41.2244897959184 56.4339484691543\\
43.0612244897959 56.4339484691543\\
44.8979591836735 56.4339484691543\\
46.734693877551 56.4339484691543\\
48.5714285714286 56.4339484691543\\
50.4081632653061 56.4339484691543\\
52.2448979591837 56.4339484691543\\
54.0816326530612 56.4339484691543\\
55.9183673469388 56.4339484691543\\
57.7551020408163 56.4339484691543\\
59.5918367346939 56.4339484691543\\
61.4285714285714 56.4339484691543\\
63.265306122449 56.4339484691543\\
65.1020408163265 56.4339484691543\\
66.9387755102041 56.4339484691543\\
68.7755102040816 56.4339484691543\\
70.6122448979592 56.4339484691543\\
72.4489795918367 56.4339484691543\\
74.2857142857143 56.4339484691543\\
76.1224489795918 56.4339484691543\\
77.9591836734694 56.4339484691543\\
79.7959183673469 56.4339484691543\\
81.6326530612245 56.4339484691543\\
83.469387755102 56.4339484691543\\
85.3061224489796 56.4339484691543\\
87.1428571428571 56.4339484691543\\
88.9795918367347 56.4339484691543\\
90.8163265306122 56.4339484691543\\
92.6530612244898 56.4339484691543\\
94.4897959183673 56.4339484691543\\
96.3265306122449 56.4339484691543\\
98.1632653061225 56.4339484691543\\
100 56.4339484691543\\
};
\addlegendentry{\eqref{eq:Srelax}};

\addplot [
color=green!50!black,
solid,
line width=2.0pt
]
table[row sep=crcr]{
10 15.6336166948269\\
11.8367346938776 15.6336166948269\\
13.6734693877551 15.6336166948269\\
15.5102040816327 15.6336166948269\\
17.3469387755102 15.6336166948268\\
19.1836734693878 36.4820968148001\\
21.0204081632653 36.4820968148001\\
22.8571428571429 36.4820968148001\\
24.6938775510204 36.4820968148002\\
26.530612244898 36.4820968148001\\
28.3673469387755 36.4820968148002\\
30.2040816326531 36.4820968148002\\
32.0408163265306 36.4820968148002\\
33.8775510204082 56.511074661353\\
35.7142857142857 56.511074661353\\
37.5510204081633 56.511074661353\\
39.3877551020408 56.511074661353\\
41.2244897959184 56.511074661353\\
43.0612244897959 56.511074661353\\
44.8979591836735 56.511074661353\\
46.734693877551 56.511074661353\\
48.5714285714286 56.511074661353\\
50.4081632653061 56.511074661353\\
52.2448979591837 56.511074661353\\
54.0816326530612 56.511074661353\\
55.9183673469388 56.511074661353\\
57.7551020408163 56.511074661353\\
59.5918367346939 56.511074661353\\
61.4285714285714 56.511074661353\\
63.265306122449 56.511074661353\\
65.1020408163265 56.511074661353\\
66.9387755102041 56.511074661353\\
68.7755102040816 56.511074661353\\
70.6122448979592 56.511074661353\\
72.4489795918367 56.511074661353\\
74.2857142857143 56.511074661353\\
76.1224489795918 56.511074661353\\
77.9591836734694 56.511074661353\\
79.7959183673469 56.511074661353\\
81.6326530612245 56.511074661353\\
83.469387755102 56.511074661353\\
85.3061224489796 56.511074661353\\
87.1428571428571 56.511074661353\\
88.9795918367347 56.511074661353\\
90.8163265306122 56.511074661353\\
92.6530612244898 56.511074661353\\
94.4897959183673 56.511074661353\\
96.3265306122449 56.511074661353\\
98.1632653061225 56.511074661353\\
100 56.511074661353\\
};
\addlegendentry{\eqref{eq:crossover}};

\addplot [
color=red,
solid,
line width=2.0pt
]
table[row sep=crcr]{
10 31.4889653445391\\
11.8367346938776 35.3203830095219\\
13.6734693877551 39.0333614882839\\
15.5102040816327 42.6310531875104\\
17.3469387755102 46.1304221013147\\
19.1836734693878 49.552483520478\\
21.0204081632653 52.5475068855043\\
22.8571428571429 54.8037982757057\\
24.6938775510204 57.1098524243998\\
26.530612244898 59.4576978072864\\
28.3673469387755 61.8408059690448\\
30.2040816326531 64.2538071413671\\
32.0408163265306 66.692263796648\\
33.8775510204082 69.152490716195\\
35.7142857142857 71.5371771135256\\
37.5510204081633 72.8924282654904\\
39.3877551020408 74.2811313835866\\
41.2244897959184 75.7008771691043\\
43.0612244897959 77.1494157448662\\
44.8979591836735 78.62464997145\\
46.734693877551 80.1246281096378\\
48.5714285714286 81.6475360933244\\
50.4081632653061 83.1916896248509\\
52.2448979591837 84.7555262592686\\
54.0816326530612 86.3375976053338\\
55.9183673469388 87.9365617386275\\
57.7551020408163 89.5511758955226\\
59.5918367346939 91.1802894950915\\
61.4285714285714 92.8228375187781\\
63.265306122449 94.4778342640631\\
65.1020408163265 96.1443674778494\\
66.9387755102041 97.8215928673553\\
68.7755102040816 99.5087289803646\\
70.6122448979592 101.205052442481\\
72.4489795918367 102.909893536105\\
74.2857142857143 104.622632103957\\
76.1224489795918 106.342693758927\\
77.9591836734694 108.069546381513\\
79.7959183673469 109.802696886173\\
81.6326530612245 111.541688238194\\
83.469387755102 113.286096703297\\
85.3061224489796 115.035529312917\\
87.1428571428571 116.789621528973\\
88.9795918367347 118.548035092832\\
90.8163265306122 120.310456044145\\
92.6530612244898 122.076592896148\\
94.4897959183673 123.846174954979\\
96.3265306122449 125.618950771444\\
98.1632653061225 127.394686714557\\
100 129.173165656951\\
};
\addlegendentry{\eqref{eq:nuclearrelax}};

\end{axis}
\end{tikzpicture}
		\resizebox{\w}{!}{
%
%
\begin{tikzpicture}

\begin{axis}[%
width=4.52083333333333in,
height=3.565625in,
scale only axis,
xmin=10,
xmax=100,
ymin=0,
ymax=140,
legend style={draw=black,fill=white,legend cell align=left}
]
\addplot [
color=blue,
solid,
line width=2.0pt
]
table[row sep=crcr]{
10 83.1589896954453\\
11.8367346938776 70.4997353523318\\
13.6734693877551 64.7731657005461\\
15.5102040816327 65.1443990717615\\
17.3469387755102 64.1100840715674\\
19.1836734693878 37.3834173153666\\
21.0204081632653 32.8073037221049\\
22.8571428571429 34.1096789839603\\
24.6938775510204 17.1075874033141\\
26.530612244898 29.6471009868828\\
28.3673469387755 29.6471009868828\\
30.2040816326531 29.6471009868828\\
32.0408163265306 29.6471009868828\\
33.8775510204082 29.6471009868828\\
35.7142857142857 29.6471009868828\\
37.5510204081633 29.6471009868828\\
39.3877551020408 29.6471009868828\\
41.2244897959184 29.6471009868828\\
43.0612244897959 29.6471009868828\\
44.8979591836735 29.6471009868828\\
46.734693877551 29.6471009868828\\
48.5714285714286 29.6471009868828\\
50.4081632653061 61.6574798421365\\
52.2448979591837 61.6574798421365\\
54.0816326530612 61.6574798421365\\
55.9183673469388 61.6574798421365\\
57.7551020408163 61.6574798421365\\
59.5918367346939 61.6574798421365\\
61.4285714285714 61.6574798421365\\
63.265306122449 61.6574798421365\\
65.1020408163265 61.6574798421365\\
66.9387755102041 61.6574798421366\\
68.7755102040816 61.6574798421366\\
70.6122448979592 61.6574798421365\\
72.4489795918367 61.6574798421365\\
74.2857142857143 61.6574798421366\\
76.1224489795918 61.6574798421365\\
77.9591836734694 61.6574798421365\\
79.7959183673469 61.6574798421366\\
81.6326530612245 61.6574798421365\\
83.469387755102 61.6574798421365\\
85.3061224489796 61.6574798421365\\
87.1428571428571 61.6574798421365\\
88.9795918367347 61.6574798421365\\
90.8163265306122 61.6574798421365\\
92.6530612244898 61.6574798421365\\
94.4897959183673 61.6574798421366\\
96.3265306122449 61.6574798421365\\
98.1632653061225 61.6574798421365\\
100 61.6574798421365\\
};
\addlegendentry{\eqref{eq:Srelax}};

\addplot [
color=green!50!black,
solid,
line width=2.0pt
]
table[row sep=crcr]{
10 17.7976182310609\\
11.8367346938776 17.7976182310609\\
13.6734693877551 17.797618231061\\
15.5102040816327 17.7976182310609\\
17.3469387755102 29.9630697377042\\
19.1836734693878 29.9630697377042\\
21.0204081632653 29.9630697377042\\
22.8571428571429 29.9630697377042\\
24.6938775510204 29.9630697377042\\
26.530612244898 29.9630697377042\\
28.3673469387755 29.9630697377042\\
30.2040816326531 29.9630697377042\\
32.0408163265306 29.9630697377042\\
33.8775510204082 29.9630697377042\\
35.7142857142857 29.9630697377042\\
37.5510204081633 29.9630697377042\\
39.3877551020408 29.9630697377042\\
41.2244897959184 29.9630697377042\\
43.0612244897959 29.9630697377042\\
44.8979591836735 29.9630697377042\\
46.734693877551 61.7213455172719\\
48.5714285714286 61.7213455172719\\
50.4081632653061 61.7213455172719\\
52.2448979591837 61.7213455172719\\
54.0816326530612 61.7213455172719\\
55.9183673469388 61.7213455172719\\
57.7551020408163 61.7213455172719\\
59.5918367346939 61.7213455172719\\
61.4285714285714 61.7213455172719\\
63.265306122449 61.7213455172719\\
65.1020408163265 61.7213455172719\\
66.9387755102041 61.7213455172719\\
68.7755102040816 61.7213455172719\\
70.6122448979592 61.7213455172719\\
72.4489795918367 61.7213455172719\\
74.2857142857143 61.7213455172719\\
76.1224489795918 61.7213455172719\\
77.9591836734694 61.7213455172719\\
79.7959183673469 61.7213455172719\\
81.6326530612245 61.7213455172719\\
83.469387755102 61.7213455172719\\
85.3061224489796 61.7213455172719\\
87.1428571428571 61.7213455172719\\
88.9795918367347 61.7213455172719\\
90.8163265306122 61.7213455172719\\
92.6530612244898 61.7213455172719\\
94.4897959183673 61.7213455172719\\
96.3265306122449 61.7213455172719\\
98.1632653061225 61.7213455172719\\
100 61.7213455172719\\
};
\addlegendentry{\eqref{eq:crossover}};

\addplot [
color=red,
solid,
line width=2.0pt
]
table[row sep=crcr]{
10 29.3152958089032\\
11.8367346938776 32.990663459696\\
13.6734693877551 36.7527359890847\\
15.5102040816327 39.8502182055139\\
17.3469387755102 41.9150492164664\\
19.1836734693878 44.0682410994633\\
21.0204081632653 46.2942460986734\\
22.8571428571429 48.5803971184943\\
24.6938775510204 50.916387619155\\
26.530612244898 53.2938260784583\\
28.3673469387755 55.7058660505324\\
30.2040816326531 58.1469048006527\\
32.0408163265306 60.6123406057482\\
33.8775510204082 63.0983785628161\\
35.7142857142857 65.6018756911756\\
37.5510204081633 68.12021747333\\
39.3877551020408 70.6512193705433\\
41.2244897959184 73.1930481057974\\
43.0612244897959 75.7441585726112\\
44.8979591836735 78.3032431005981\\
46.734693877551 80.8691905076506\\
48.5714285714286 83.4410529215069\\
50.4081632653061 84.9408038286644\\
52.2448979591837 86.3566909214551\\
54.0816326530612 87.7935450139954\\
55.9183673469388 89.2501057728672\\
57.7551020408163 90.7251900534778\\
59.5918367346939 92.2176878355833\\
61.4285714285714 93.7265582050632\\
63.265306122449 95.2508254204562\\
65.1020408163265 96.7895750935331\\
66.9387755102041 98.3419505054402\\
68.7755102040816 99.9071490735402\\
70.6122448979592 101.484418978809\\
72.4489795918367 103.07305595939\\
74.2857142857143 104.672400272517\\
76.1224489795918 106.281833824294\\
77.9591836734694 107.900777464809\\
79.7959183673469 109.528688444392\\
81.6326530612245 111.16505802577\\
83.469387755102 112.809409245949\\
85.3061224489796 114.461294821177\\
87.1428571428571 116.120295187976\\
88.9795918367347 117.786016673068\\
90.8163265306122 119.458089785018\\
92.6530612244898 121.136167620486\\
94.4897959183673 122.819924378135\\
96.3265306122449 124.509053973475\\
98.1632653061225 126.20326874816\\
100 127.902298267575\\
};
\addlegendentry{\eqref{eq:nuclearrelax}};

\end{axis}
\end{tikzpicture}
		\resizebox{\w}{!}{
%
%
\begin{tikzpicture}

\begin{axis}[%
width=4.52083333333333in,
height=3.565625in,
scale only axis,
xmin=10,
xmax=100,
ymin=0,
ymax=200,
legend style={draw=black,fill=white,legend cell align=left}
]
\addplot [
color=blue,
solid,
line width=2.0pt
]
table[row sep=crcr]{
10 146.701191988413\\
11.8367346938776 135.454970790107\\
13.6734693877551 134.458733084621\\
15.5102040816327 125.940869492342\\
17.3469387755102 126.89554292471\\
19.1836734693878 127.583927583223\\
21.0204081632653 125.908744935466\\
22.8571428571429 127.106684983785\\
24.6938775510204 57.1197356369891\\
26.530612244898 61.7933029661512\\
28.3673469387755 56.2450942330734\\
30.2040816326531 175.2785162551\\
32.0408163265306 59.899816809771\\
33.8775510204082 55.2972337107543\\
35.7142857142857 59.5945913730412\\
37.5510204081633 57.3550412504187\\
39.3877551020408 23.6948994122284\\
41.2244897959184 23.6948994049792\\
43.0612244897959 51.1504771994437\\
44.8979591836735 51.1504771994437\\
46.734693877551 51.1504771994437\\
48.5714285714286 51.1504771994437\\
50.4081632653061 51.1504771994437\\
52.2448979591837 51.1504771994437\\
54.0816326530612 51.1504771994437\\
55.9183673469388 51.1504771994436\\
57.7551020408163 51.1504771994437\\
59.5918367346939 51.1504771994436\\
61.4285714285714 51.1504771994437\\
63.265306122449 51.1504771994437\\
65.1020408163265 51.1504771994437\\
66.9387755102041 51.1504771994436\\
68.7755102040816 51.1504771994437\\
70.6122448979592 51.1504771994437\\
72.4489795918367 51.1504771994437\\
74.2857142857143 51.1504771994437\\
76.1224489795918 51.1504771994437\\
77.9591836734694 51.1504771994436\\
79.7959183673469 51.1504771994436\\
81.6326530612245 51.1504771994437\\
83.469387755102 51.1504771994436\\
85.3061224489796 51.1504771994437\\
87.1428571428571 51.1504771994437\\
88.9795918367347 51.1504771994437\\
90.8163265306122 51.1504771994436\\
92.6530612244898 51.1504771994436\\
94.4897959183673 51.1504771994436\\
96.3265306122449 51.1504771994437\\
98.1632653061225 51.1504771994436\\
100 51.1504771994437\\
};
\addlegendentry{\eqref{eq:Srelax}};

\addplot [
color=green!50!black,
solid,
line width=2.0pt
]
table[row sep=crcr]{
10 17.6250057016165\\
11.8367346938776 24.9672420962091\\
13.6734693877551 24.9671315944634\\
15.5102040816327 24.9670835888518\\
17.3469387755102 24.9670621688261\\
19.1836734693878 24.9670523954193\\
21.0204081632653 24.9670481300137\\
22.8571428571429 24.9670452920515\\
24.6938775510204 24.9670452919023\\
26.530612244898 24.9670452918105\\
28.3673469387755 24.9670452917166\\
30.2040816326531 24.9670452916594\\
32.0408163265306 24.9670452915083\\
33.8775510204082 24.9670452914382\\
35.7142857142857 24.9670452913933\\
37.5510204081633 24.9670452913666\\
39.3877551020408 51.366298231503\\
41.2244897959184 51.3662982315031\\
43.0612244897959 51.3662982315031\\
44.8979591836735 51.3662982315031\\
46.734693877551 51.3662982315031\\
48.5714285714286 51.3662982315031\\
50.4081632653061 51.3662982315031\\
52.2448979591837 51.3662982315031\\
54.0816326530612 51.3662982315031\\
55.9183673469388 51.366298231503\\
57.7551020408163 51.3662982315031\\
59.5918367346939 51.3662982315031\\
61.4285714285714 51.3662982315031\\
63.265306122449 51.3662982315031\\
65.1020408163265 51.3662982315031\\
66.9387755102041 51.3662982315031\\
68.7755102040816 51.3662982315031\\
70.6122448979592 51.3662982315031\\
72.4489795918367 51.3662982315031\\
74.2857142857143 51.3662982315031\\
76.1224489795918 51.3662982315031\\
77.9591836734694 51.3662982315031\\
79.7959183673469 51.3662982315031\\
81.6326530612245 51.366298231503\\
83.469387755102 51.366298231503\\
85.3061224489796 51.366298231503\\
87.1428571428571 51.3662982315029\\
88.9795918367347 51.3662982315031\\
90.8163265306122 51.3662982315031\\
92.6530612244898 51.366298231503\\
94.4897959183673 51.366298231503\\
96.3265306122449 51.3662982315031\\
98.1632653061225 51.366298231503\\
100 51.3662982315031\\
};
\addlegendentry{\eqref{eq:crossover}};

\addplot [
color=red,
solid,
line width=2.0pt
]
table[row sep=crcr]{
10 37.3327615147751\\
11.8367346938776 42.2872889016743\\
13.6734693877551 46.6449527390173\\
15.5102040816327 50.8801456429384\\
17.3469387755102 55.1332713996943\\
19.1836734693878 59.3792725020445\\
21.0204081632653 63.5991378108806\\
22.8571428571429 67.7779441484131\\
24.6938775510204 71.9029661463728\\
26.530612244898 75.4307064896592\\
28.3673469387755 78.1463808541426\\
30.2040816326531 80.926780969473\\
32.0408163265306 83.7650929349002\\
33.8775510204082 86.6552590069972\\
35.7142857142857 89.5918797132161\\
37.5510204081633 92.5701275591445\\
39.3877551020408 95.5856711480659\\
41.2244897959184 98.6346085421453\\
43.0612244897959 101.713408741865\\
44.8979591836735 104.818860232766\\
46.734693877551 107.948025635256\\
48.5714285714286 111.098201585657\\
50.4081632653061 114.266883068795\\
52.2448979591837 117.451731510298\\
54.0816326530612 120.650546018651\\
55.9183673469388 123.861237241925\\
57.7551020408163 127.081803372444\\
59.5918367346939 130.310307895394\\
61.4285714285714 133.54485873596\\
63.265306122449 136.783588516168\\
65.1020408163265 140.024635689286\\
66.9387755102041 143.266126379514\\
68.7755102040816 146.506156820589\\
70.6122448979592 149.742776362465\\
72.4489795918367 152.973971103954\\
74.2857142857143 156.197648314616\\
76.1224489795918 159.411621934359\\
77.9591836734694 162.613599585603\\
79.7959183673469 165.801171699864\\
81.6326530612245 168.971803543304\\
83.469387755102 172.122831113642\\
85.3061224489796 175.25146205504\\
87.1428571428571 178.354782869999\\
88.9795918367347 181.42977375912\\
90.8163265306122 184.473332343112\\
92.6530612244898 187.482307265708\\
94.4897959183673 190.453542196428\\
96.3265306122449 193.383930024851\\
98.1632653061225 196.270476079608\\
100 199.110368086938\\
};
\addlegendentry{\eqref{eq:nuclearrelax}};

\end{axis}
\end{tikzpicture}
		\resizebox{\w}{!}{
%
%
\begin{tikzpicture}

\begin{axis}[%
width=4.52083333333333in,
height=3.565625in,
scale only axis,
xmin=10,
xmax=100,
ymin=0,
ymax=14,
legend style={draw=black,fill=white,legend cell align=left}
]
\addplot [
color=blue,
solid,
line width=2.0pt
]
table[row sep=crcr]{
10 13.6666666666667\\
11.8367346938776 13.6666666666667\\
13.6734693877551 13.6666666666667\\
15.5102040816327 13.6666666666667\\
17.3469387755102 13.6666666666667\\
19.1836734693878 13.6666666666667\\
21.0204081632653 12\\
22.8571428571429 4\\
24.6938775510204 4\\
26.530612244898 4\\
28.3673469387755 4\\
30.2040816326531 4\\
32.0408163265306 4\\
33.8775510204082 4\\
35.7142857142857 4\\
37.5510204081633 4\\
39.3877551020408 3\\
41.2244897959184 2\\
43.0612244897959 2\\
44.8979591836735 1\\
46.734693877551 1\\
48.5714285714286 1\\
50.4081632653061 1\\
52.2448979591837 1\\
54.0816326530612 1\\
55.9183673469388 1\\
57.7551020408163 1\\
59.5918367346939 1\\
61.4285714285714 1\\
63.265306122449 1\\
65.1020408163265 1\\
66.9387755102041 1\\
68.7755102040816 1\\
70.6122448979592 1\\
72.4489795918367 1\\
74.2857142857143 1\\
76.1224489795918 1\\
77.9591836734694 1\\
79.7959183673469 1\\
81.6326530612245 1\\
83.469387755102 1\\
85.3061224489796 1\\
87.1428571428571 1\\
88.9795918367347 1\\
90.8163265306122 1\\
92.6530612244898 1\\
94.4897959183673 1\\
96.3265306122449 1\\
98.1632653061225 1\\
100 1\\
};
\addlegendentry{\eqref{eq:Srelax}};

\addplot [
color=green!50!black,
solid,
line width=2.0pt
]
table[row sep=crcr]{
10 3\\
11.8367346938776 2\\
13.6734693877551 2\\
15.5102040816327 2\\
17.3469387755102 2\\
19.1836734693878 2\\
21.0204081632653 2\\
22.8571428571429 2\\
24.6938775510204 2\\
26.530612244898 2\\
28.3673469387755 2\\
30.2040816326531 2\\
32.0408163265306 2\\
33.8775510204082 2\\
35.7142857142857 2\\
37.5510204081633 2\\
39.3877551020408 2\\
41.2244897959184 2\\
43.0612244897959 1\\
44.8979591836735 1\\
46.734693877551 1\\
48.5714285714286 1\\
50.4081632653061 1\\
52.2448979591837 1\\
54.0816326530612 1\\
55.9183673469388 1\\
57.7551020408163 1\\
59.5918367346939 1\\
61.4285714285714 1\\
63.265306122449 1\\
65.1020408163265 1\\
66.9387755102041 1\\
68.7755102040816 1\\
70.6122448979592 1\\
72.4489795918367 1\\
74.2857142857143 1\\
76.1224489795918 1\\
77.9591836734694 1\\
79.7959183673469 1\\
81.6326530612245 1\\
83.469387755102 1\\
85.3061224489796 1\\
87.1428571428571 1\\
88.9795918367347 1\\
90.8163265306122 1\\
92.6530612244898 1\\
94.4897959183673 1\\
96.3265306122449 1\\
98.1632653061225 1\\
100 1\\
};
\addlegendentry{\eqref{eq:crossover}};

\addplot [
color=red,
solid,
line width=2.0pt
]
table[row sep=crcr]{
10 3\\
11.8367346938776 2\\
13.6734693877551 2\\
15.5102040816327 2\\
17.3469387755102 2\\
19.1836734693878 2\\
21.0204081632653 2\\
22.8571428571429 2\\
24.6938775510204 2\\
26.530612244898 2\\
28.3673469387755 2\\
30.2040816326531 2\\
32.0408163265306 2\\
33.8775510204082 2\\
35.7142857142857 2\\
37.5510204081633 2\\
39.3877551020408 2\\
41.2244897959184 2\\
43.0612244897959 2\\
44.8979591836735 1\\
46.734693877551 1\\
48.5714285714286 1\\
50.4081632653061 1\\
52.2448979591837 1\\
54.0816326530612 1\\
55.9183673469388 1\\
57.7551020408163 1\\
59.5918367346939 1\\
61.4285714285714 1\\
63.265306122449 1\\
65.1020408163265 1\\
66.9387755102041 1\\
68.7755102040816 1\\
70.6122448979592 1\\
72.4489795918367 1\\
74.2857142857143 1\\
76.1224489795918 1\\
77.9591836734694 1\\
79.7959183673469 1\\
81.6326530612245 1\\
83.469387755102 1\\
85.3061224489796 1\\
87.1428571428571 1\\
88.9795918367347 1\\
90.8163265306122 1\\
92.6530612244898 1\\
94.4897959183673 1\\
96.3265306122449 1\\
98.1632653061225 1\\
100 1\\
};
\addlegendentry{\eqref{eq:nuclearrelax}};

\end{axis}
\end{tikzpicture}
		\resizebox{\w}{!}{
%
%
\begin{tikzpicture}

\begin{axis}[%
width=4.52083333333333in,
height=3.565625in,
scale only axis,
xmin=10,
xmax=100,
ymin=0,
ymax=14,
legend style={draw=black,fill=white,legend cell align=left}
]
\addplot [
color=blue,
solid,
line width=2.0pt
]
table[row sep=crcr]{
10 13.3333333333333\\
11.8367346938776 7\\
13.6734693877551 5\\
15.5102040816327 5\\
17.3469387755102 5\\
19.1836734693878 5\\
21.0204081632653 4\\
22.8571428571429 4\\
24.6938775510204 3\\
26.530612244898 3\\
28.3673469387755 2\\
30.2040816326531 2\\
32.0408163265306 2\\
33.8775510204082 2\\
35.7142857142857 1\\
37.5510204081633 1\\
39.3877551020408 1\\
41.2244897959184 1\\
43.0612244897959 1\\
44.8979591836735 1\\
46.734693877551 1\\
48.5714285714286 1\\
50.4081632653061 1\\
52.2448979591837 1\\
54.0816326530612 1\\
55.9183673469388 1\\
57.7551020408163 1\\
59.5918367346939 1\\
61.4285714285714 1\\
63.265306122449 1\\
65.1020408163265 1\\
66.9387755102041 1\\
68.7755102040816 1\\
70.6122448979592 1\\
72.4489795918367 1\\
74.2857142857143 1\\
76.1224489795918 1\\
77.9591836734694 1\\
79.7959183673469 1\\
81.6326530612245 1\\
83.469387755102 1\\
85.3061224489796 1\\
87.1428571428571 1\\
88.9795918367347 1\\
90.8163265306122 1\\
92.6530612244898 1\\
94.4897959183673 1\\
96.3265306122449 1\\
98.1632653061225 1\\
100 1\\
};
\addlegendentry{\eqref{eq:Srelax}};

\addplot [
color=green!50!black,
solid,
line width=2.0pt
]
table[row sep=crcr]{
10 3\\
11.8367346938776 3\\
13.6734693877551 3\\
15.5102040816327 3\\
17.3469387755102 3\\
19.1836734693878 2\\
21.0204081632653 2\\
22.8571428571429 2\\
24.6938775510204 2\\
26.530612244898 2\\
28.3673469387755 2\\
30.2040816326531 2\\
32.0408163265306 2\\
33.8775510204082 1\\
35.7142857142857 1\\
37.5510204081633 1\\
39.3877551020408 1\\
41.2244897959184 1\\
43.0612244897959 1\\
44.8979591836735 1\\
46.734693877551 1\\
48.5714285714286 1\\
50.4081632653061 1\\
52.2448979591837 1\\
54.0816326530612 1\\
55.9183673469388 1\\
57.7551020408163 1\\
59.5918367346939 1\\
61.4285714285714 1\\
63.265306122449 1\\
65.1020408163265 1\\
66.9387755102041 1\\
68.7755102040816 1\\
70.6122448979592 1\\
72.4489795918367 1\\
74.2857142857143 1\\
76.1224489795918 1\\
77.9591836734694 1\\
79.7959183673469 1\\
81.6326530612245 1\\
83.469387755102 1\\
85.3061224489796 1\\
87.1428571428571 1\\
88.9795918367347 1\\
90.8163265306122 1\\
92.6530612244898 1\\
94.4897959183673 1\\
96.3265306122449 1\\
98.1632653061225 1\\
100 1\\
};
\addlegendentry{\eqref{eq:crossover}};

\addplot [
color=red,
solid,
line width=2.0pt
]
table[row sep=crcr]{
10 3\\
11.8367346938776 3\\
13.6734693877551 3\\
15.5102040816327 3\\
17.3469387755102 3\\
19.1836734693878 3\\
21.0204081632653 2\\
22.8571428571429 2\\
24.6938775510204 2\\
26.530612244898 2\\
28.3673469387755 2\\
30.2040816326531 2\\
32.0408163265306 2\\
33.8775510204082 2\\
35.7142857142857 1\\
37.5510204081633 1\\
39.3877551020408 1\\
41.2244897959184 1\\
43.0612244897959 1\\
44.8979591836735 1\\
46.734693877551 1\\
48.5714285714286 1\\
50.4081632653061 1\\
52.2448979591837 1\\
54.0816326530612 1\\
55.9183673469388 1\\
57.7551020408163 1\\
59.5918367346939 1\\
61.4285714285714 1\\
63.265306122449 1\\
65.1020408163265 1\\
66.9387755102041 1\\
68.7755102040816 1\\
70.6122448979592 1\\
72.4489795918367 1\\
74.2857142857143 1\\
76.1224489795918 1\\
77.9591836734694 1\\
79.7959183673469 1\\
81.6326530612245 1\\
83.469387755102 1\\
85.3061224489796 1\\
87.1428571428571 1\\
88.9795918367347 1\\
90.8163265306122 1\\
92.6530612244898 1\\
94.4897959183673 1\\
96.3265306122449 1\\
98.1632653061225 1\\
100 1\\
};
\addlegendentry{\eqref{eq:nuclearrelax}};

\end{axis}
\end{tikzpicture}
		\resizebox{\w}{!}{
%
%
\begin{tikzpicture}

\begin{axis}[%
width=4.52083333333333in,
height=3.565625in,
scale only axis,
xmin=10,
xmax=100,
ymin=0,
ymax=14,
legend style={draw=black,fill=white,legend cell align=left}
]
\addplot [
color=blue,
solid,
line width=2.0pt
]
table[row sep=crcr]{
10 13.6666666666667\\
11.8367346938776 7\\
13.6734693877551 5\\
15.5102040816327 5\\
17.3469387755102 5\\
19.1836734693878 4\\
21.0204081632653 4\\
22.8571428571429 4\\
24.6938775510204 3\\
26.530612244898 2\\
28.3673469387755 2\\
30.2040816326531 2\\
32.0408163265306 2\\
33.8775510204082 2\\
35.7142857142857 2\\
37.5510204081633 2\\
39.3877551020408 2\\
41.2244897959184 2\\
43.0612244897959 2\\
44.8979591836735 2\\
46.734693877551 2\\
48.5714285714286 2\\
50.4081632653061 1\\
52.2448979591837 1\\
54.0816326530612 1\\
55.9183673469388 1\\
57.7551020408163 1\\
59.5918367346939 1\\
61.4285714285714 1\\
63.265306122449 1\\
65.1020408163265 1\\
66.9387755102041 1\\
68.7755102040816 1\\
70.6122448979592 1\\
72.4489795918367 1\\
74.2857142857143 1\\
76.1224489795918 1\\
77.9591836734694 1\\
79.7959183673469 1\\
81.6326530612245 1\\
83.469387755102 1\\
85.3061224489796 1\\
87.1428571428571 1\\
88.9795918367347 1\\
90.8163265306122 1\\
92.6530612244898 1\\
94.4897959183673 1\\
96.3265306122449 1\\
98.1632653061225 1\\
100 1\\
};
\addlegendentry{\eqref{eq:Srelax}};

\addplot [
color=green!50!black,
solid,
line width=2.0pt
]
table[row sep=crcr]{
10 3\\
11.8367346938776 3\\
13.6734693877551 3\\
15.5102040816327 3\\
17.3469387755102 2\\
19.1836734693878 2\\
21.0204081632653 2\\
22.8571428571429 2\\
24.6938775510204 2\\
26.530612244898 2\\
28.3673469387755 2\\
30.2040816326531 2\\
32.0408163265306 2\\
33.8775510204082 2\\
35.7142857142857 2\\
37.5510204081633 2\\
39.3877551020408 2\\
41.2244897959184 2\\
43.0612244897959 2\\
44.8979591836735 2\\
46.734693877551 1\\
48.5714285714286 1\\
50.4081632653061 1\\
52.2448979591837 1\\
54.0816326530612 1\\
55.9183673469388 1\\
57.7551020408163 1\\
59.5918367346939 1\\
61.4285714285714 1\\
63.265306122449 1\\
65.1020408163265 1\\
66.9387755102041 1\\
68.7755102040816 1\\
70.6122448979592 1\\
72.4489795918367 1\\
74.2857142857143 1\\
76.1224489795918 1\\
77.9591836734694 1\\
79.7959183673469 1\\
81.6326530612245 1\\
83.469387755102 1\\
85.3061224489796 1\\
87.1428571428571 1\\
88.9795918367347 1\\
90.8163265306122 1\\
92.6530612244898 1\\
94.4897959183673 1\\
96.3265306122449 1\\
98.1632653061225 1\\
100 1\\
};
\addlegendentry{\eqref{eq:crossover}};

\addplot [
color=red,
solid,
line width=2.0pt
]
table[row sep=crcr]{
10 3\\
11.8367346938776 3\\
13.6734693877551 3\\
15.5102040816327 2\\
17.3469387755102 2\\
19.1836734693878 2\\
21.0204081632653 2\\
22.8571428571429 2\\
24.6938775510204 2\\
26.530612244898 2\\
28.3673469387755 2\\
30.2040816326531 2\\
32.0408163265306 2\\
33.8775510204082 2\\
35.7142857142857 2\\
37.5510204081633 2\\
39.3877551020408 2\\
41.2244897959184 2\\
43.0612244897959 2\\
44.8979591836735 2\\
46.734693877551 2\\
48.5714285714286 2\\
50.4081632653061 1\\
52.2448979591837 1\\
54.0816326530612 1\\
55.9183673469388 1\\
57.7551020408163 1\\
59.5918367346939 1\\
61.4285714285714 1\\
63.265306122449 1\\
65.1020408163265 1\\
66.9387755102041 1\\
68.7755102040816 1\\
70.6122448979592 1\\
72.4489795918367 1\\
74.2857142857143 1\\
76.1224489795918 1\\
77.9591836734694 1\\
79.7959183673469 1\\
81.6326530612245 1\\
83.469387755102 1\\
85.3061224489796 1\\
87.1428571428571 1\\
88.9795918367347 1\\
90.8163265306122 1\\
92.6530612244898 1\\
94.4897959183673 1\\
96.3265306122449 1\\
98.1632653061225 1\\
100 1\\
};
\addlegendentry{\eqref{eq:nuclearrelax}};

\end{axis}
\end{tikzpicture}
		\resizebox{\w}{!}{
%
%
\begin{tikzpicture}

\begin{axis}[%
width=4.52083333333333in,
height=3.565625in,
scale only axis,
xmin=10,
xmax=100,
ymin=1,
ymax=7,
legend style={draw=black,fill=white,legend cell align=left}
]
\addplot [
color=blue,
solid,
line width=2.0pt
]
table[row sep=crcr]{
10 7\\
11.8367346938776 5\\
13.6734693877551 5\\
15.5102040816327 4\\
17.3469387755102 4\\
19.1836734693878 4\\
21.0204081632653 4\\
22.8571428571429 4\\
24.6938775510204 3\\
26.530612244898 3\\
28.3673469387755 3\\
30.2040816326531 3\\
32.0408163265306 3\\
33.8775510204082 3\\
35.7142857142857 3\\
37.5510204081633 3\\
39.3877551020408 2\\
41.2244897959184 2\\
43.0612244897959 1\\
44.8979591836735 1\\
46.734693877551 1\\
48.5714285714286 1\\
50.4081632653061 1\\
52.2448979591837 1\\
54.0816326530612 1\\
55.9183673469388 1\\
57.7551020408163 1\\
59.5918367346939 1\\
61.4285714285714 1\\
63.265306122449 1\\
65.1020408163265 1\\
66.9387755102041 1\\
68.7755102040816 1\\
70.6122448979592 1\\
72.4489795918367 1\\
74.2857142857143 1\\
76.1224489795918 1\\
77.9591836734694 1\\
79.7959183673469 1\\
81.6326530612245 1\\
83.469387755102 1\\
85.3061224489796 1\\
87.1428571428571 1\\
88.9795918367347 1\\
90.8163265306122 1\\
92.6530612244898 1\\
94.4897959183673 1\\
96.3265306122449 1\\
98.1632653061225 1\\
100 1\\
};
\addlegendentry{\eqref{eq:Srelax}};

\addplot [
color=green!50!black,
solid,
line width=2.0pt
]
table[row sep=crcr]{
10 3\\
11.8367346938776 2\\
13.6734693877551 2\\
15.5102040816327 2\\
17.3469387755102 2\\
19.1836734693878 2\\
21.0204081632653 2\\
22.8571428571429 2\\
24.6938775510204 2\\
26.530612244898 2\\
28.3673469387755 2\\
30.2040816326531 2\\
32.0408163265306 2\\
33.8775510204082 2\\
35.7142857142857 2\\
37.5510204081633 2\\
39.3877551020408 1\\
41.2244897959184 1\\
43.0612244897959 1\\
44.8979591836735 1\\
46.734693877551 1\\
48.5714285714286 1\\
50.4081632653061 1\\
52.2448979591837 1\\
54.0816326530612 1\\
55.9183673469388 1\\
57.7551020408163 1\\
59.5918367346939 1\\
61.4285714285714 1\\
63.265306122449 1\\
65.1020408163265 1\\
66.9387755102041 1\\
68.7755102040816 1\\
70.6122448979592 1\\
72.4489795918367 1\\
74.2857142857143 1\\
76.1224489795918 1\\
77.9591836734694 1\\
79.7959183673469 1\\
81.6326530612245 1\\
83.469387755102 1\\
85.3061224489796 1\\
87.1428571428571 1\\
88.9795918367347 1\\
90.8163265306122 1\\
92.6530612244898 1\\
94.4897959183673 1\\
96.3265306122449 1\\
98.1632653061225 1\\
100 1\\
};
\addlegendentry{\eqref{eq:crossover}}

\addplot [
color=red,
solid,
line width=2.0pt
]
table[row sep=crcr]{
10 3\\
11.8367346938776 3\\
13.6734693877551 2\\
15.5102040816327 2\\
17.3469387755102 2\\
19.1836734693878 2\\
21.0204081632653 2\\
22.8571428571429 2\\
24.6938775510204 2\\
26.530612244898 1\\
28.3673469387755 1\\
30.2040816326531 1\\
32.0408163265306 1\\
33.8775510204082 1\\
35.7142857142857 1\\
37.5510204081633 1\\
39.3877551020408 1\\
41.2244897959184 1\\
43.0612244897959 1\\
44.8979591836735 1\\
46.734693877551 1\\
48.5714285714286 1\\
50.4081632653061 1\\
52.2448979591837 1\\
54.0816326530612 1\\
55.9183673469388 1\\
57.7551020408163 1\\
59.5918367346939 1\\
61.4285714285714 1\\
63.265306122449 1\\
65.1020408163265 1\\
66.9387755102041 1\\
68.7755102040816 1\\
70.6122448979592 1\\
72.4489795918367 1\\
74.2857142857143 1\\
76.1224489795918 1\\
77.9591836734694 1\\
79.7959183673469 1\\
81.6326530612245 1\\
83.469387755102 1\\
85.3061224489796 1\\
87.1428571428571 1\\
88.9795918367347 1\\
90.8163265306122 1\\
92.6530612244898 1\\
94.4897959183673 1\\
96.3265306122449 1\\
98.1632653061225 1\\
100 1\\
};
\addlegendentry{\eqref{eq:nuclearrelax}};

\end{axis}
\end{tikzpicture}
	\end{center}
	\caption{Result of the four MOCAP experiments  (columns 1-4). Top: Regularization strength $\mu$ versus data fit $\|RX-M\|_F$. Middle: Regularization strength $\mu$ versus ground truth distance $\|X-X_{gt}\|_F$. Bottom: Regularization strength $\mu$ versus $\rank(X^\#)$.}
	\label{fig:deformdatafit}
\end{figure*}

In our final experiment we consider non rigid structure form motion with a rank prior.
We follow the aproach of Dai. et al. \cite{dai-etal-ijcv-2014} and let
\begin{equation}
X =
\left[
\begin{array}{c}
X_1 \\
Y_1 \\
Z_1 \\
\vdots \\
X_F \\
Y_F \\
Z_F \\
\end{array}
\right] \text{ and }
X^\# =
\left[
\begin{array}{ccc}
X_1 & Y_1 & Z_1 \\
\vdots & \vdots & \vdots \\
X_F &
Y_F &
Z_F
\end{array}
\right],
\end{equation}
where $X_i$,$Y_i$,$Z_i$ are $1 \times m$ matrices containing the $x$-,$y$- and $z$-coordinates of tracked image points in frame $i$.
With an orthographic camera the projection of the $3D$ points can be written $M = R X$,
where $R$ is a $2F \times 3F$ block diagonal matrix with $2 \times 3$ blocks $R_i$, consisting of two orthogonal rows that encode the camera orientation in image $i$. The resulting $2F \times m$ measurement matrix $M$ consists of the $x$- and $y$-image coordinates of the tracked points. Under the assumption of a linear shape basis model \cite{bregler-etal-cvpr-2000} with $r$ deformation modes, the matrix $X^\#$ can be written $X^\# = CB$, where $B$ is $r \times 3m$, and therefore $\rank(X^\#) = r$. We search for the matrix $X^\#$ of rank $r$ that minimizes the residual error $\|PX-M\|_F^2$.

The linear operator defined by $\A (X^\#) = RX$ does by itself not obey \eqref{eq:matrixRIP} since there are matrices of rank $1$ in its nullspace, see \cite{olsson-etal-iccv-2017}. 

In Figure~\ref{fig:deformdatafit} we compare the three relaxations 
\begin{eqnarray}
&\reg_{0,\mu}(\bfsigma(X^\#))+\|RX-M\|_F^2, &\label{eq:Srelax}\\
&\reg_{\mu,\lambda}(\bfsigma(X^\#))+ \|RX-M\|_F^2. \label{eq:crossover} &\\
& 2\sqrt{\mu}\|X^\#\|_*+\|RX-M\|_F^2, &\label{eq:nuclearrelax}
\end{eqnarray}
on the four MOCAP sequences displayed in Figure~\ref{fig:mocapshapes}, obtained from \cite{dai-etal-ijcv-2014}.
These consist of real motion capture data and therefore the ground truth solution is only approximatively of low rank.
Figure~\ref{fig:deformdatafit} shows results for the three methods.
We solved the problem for $50$ values of $\sqrt{\mu}$ between $10$ and $100$ (orange curve) and computed the resulting rank and datafit. (For \eqref{eq:crossover} we kept $\lambda=5$ fixed.)
All three formulations were given the same (random) starting solution.

The same tendencies are visible for all four sequences. While \eqref{eq:Srelax} generally gives a better data fit than \eqref{eq:nuclearrelax}, due to the nuclear norms shrinking bias, the distance to the ground truth is larger for low values of $\mu$ or equivalently large ranks where the problem gets ill posed. 
The relaxation \eqref{eq:crossover} consistently outperforms \eqref{eq:nuclearrelax} both in terms of data fit and distance to ground truth. In addition its performance is similar to \eqref{eq:Srelax} for high values of $\mu$ while it does not exhibit the same unstable behavior for high ranks.

{\small
\bibliographystyle{ieee}

}

\newpage

\appendix
\section{Proof of Theorem~\ref{thm:convenv}}\label{app:A}
\begin{proof}
	We need to prove that 
	\begin{equation}
	(f (\y) + \lambda \| \y\|_1 )^{**}=f^{**} (\y) + \lambda \| \y\|_1,
	\end{equation}
	where $*$ denotes the Fenchel conjugate, i.e.~ $g^*(\x)=\sup_{\y}\langle\x,\y\rangle-g(\y)$.
	For a general function $g$ the convex envelope can be computed using the bi-conjugate $g^{**}$.
	
	By symmetry it suffices to consider $\y\in\R^d_+$.
	First notice that in
	\begin{equation}
	( f(\cdot) + \lambda \| \cdot \|_1 )^* (\y)=\sup_{\x}  \langle \x,\y \rangle - (f(\x) + \lambda \| \x \|_1),
	\label{eq:conj1}
	\end{equation}
	only the term $\langle\x,\y\rangle$ depends on the signs of the elements of $x$. It is clear that any maximizing \( \x \) will have  \( \text{sign}(x_i)= \text{sign}(y_i) \) Therefore we may assume without loss of generality that $\x\in\R^d_+$ as well.
	We now have $\|\x\|_1 = \langle\x,\mathbf{1}\rangle$ which reduces \eqref{eq:conj1} to
	\begin{equation}
	\sup_{\x \in \mathbb{R}^d _+  } \langle \x,\y - \lambda \mathbf{1}  \rangle - f(\x).
	\end{equation}
	Note that if \( y_j - \lambda < 0  \)
	for some \(j \), then for every \( \x \in \mathbb{R}^d _+  \) we have \begin{equation}   \quad \ \langle \x - \mathbf{e}_j x_j, \y - \lambda \mathbf{1} \rangle - f( \x - x_j \mathbf{e}_j  )   \ge \langle \x , \y - \lambda \mathbf{1} \rangle - f( \x   ),
	\end{equation} where \(\mathbf{e}_j\) is the \(j\)th vector of the canonical basis. Therefore we introduce the set \( S= \{i \, : \, y_i < \lambda   \}  \) and the notation
	$\chi_{S^c} \x = \sum_{k \in S^c} \mathbf{e}_k x_k$.
	We then have
	\begin{equation}\begin{split}
	& \quad \ \sup_{\x \in \mathbb{R}^d _+  } \langle \x , \y - \lambda \mathbf{1} \rangle - f( \x   ) = \sup_{\x \in \mathbb{R}^d _+  } \langle \chi_{S^c} \x , \y - \lambda \mathbf{1} \rangle - f( \x   ) \\ &  = \sup_{\x \in \mathbb{R}^d  } \langle  \x , \chi_{S^c}(\y - \lambda \mathbf{1}) \rangle - f( \x   )  = f^* ( (\y - \lambda\mathbf{1})_+ )
	, \end{split}
	\end{equation}
	where $(\x)_+$ denotes thresholding at 0, that is, $(\x)_+ =  (\max(x_1,0),...,\max(x_d,0))$.
	
	To compute the second Fenchel-conjugate, first note that $f^*(\x+\mathbf{v})\geq f^*(\x)$ for \(\x, \mathbf{v} \in \mathbb{R}^d_+ \) since
	\begin{equation}\langle \y,\x \rangle - f(\y) - \|\y\|_1 \le \langle \y,\x + \mathbf{v} \rangle - f(\y) - \|\y\|_1  \end{equation} for all \( \y \in \mathbb{R}^d_+ \). Therefore we assume wlog \(\y \in \mathbb{R}^d_+ \). Note that the supremum in $\sup_{\x \in \mathbb{R}^d} \langle \x,\y \rangle - f^* ( (\x - \lambda\mathbf{1})_+ )$ is clearly  attained for an $\x$ with $x_j\geq \lambda$ for all $1\leq j\leq d$. By this observation we get $(f+\lambda\|\cdot\|_1)^{**}(\y)=$
	\begin{equation}
	\begin{split} &
	\sup_{\x \in \mathbb{R}^d} \langle \x,\y \rangle - f^* ( (\x - \lambda\mathbf{1})_+ )
	= \sup_{x_j\geq\lambda } \langle \x,\y \rangle - f^* ( \x - \lambda\mathbf{1} ) \\ & = \sup_{\mathbf{z} \in \mathbb{R}^d_+   } \langle \mathbf{z} + \lambda \mathbf{1},\y \rangle - f^* ( \mathbf{z} )    = \lambda \|\y\|_1 + \sup_{\mathbf{z} \in \mathbb{R}^d   } \langle \mathbf{z},\y \rangle - f^* ( \mathbf{z} ), \end{split}
	\end{equation}
	which shows that
	$(f+\lambda\|\cdot\|_1)^{**}(\y)= \lambda\|\y\|_1 + f^{**}(\y)$.
\end{proof}

Since the proof for the matrix case looks somewhat different we present it here.
It uses properties of the $\S_{\gamma}$-transform \cite{carlsson2016convexification} which is defined as
\begin{equation}
\S_{\gamma}(f)(\x)=(f+\frac{\gamma}{2}\|\cdot\|^2)^*(\gamma\x)-\frac{\gamma}{2}\|\x\|^2
\label{eq:Stransfdef}
\end{equation} 
The connection to $\Q_\gamma$ is that $\Q_\gamma(f)(\x) =\S_\gamma(\S_\gamma(f))(\x)$. Hence it is enought to show that two functions have the same $\S_\gamma$ transform to show that they have the same $\Q_\gamma$ transform.
\begin{proposition}\label{p2}
	Suppose that $f$ is a permutation and sign invariant $[0,\infty]$-valued functional on $\R^{d}$, $d=\min(n_1,n_2)$, and that $F(X)=f(\sigma(X)),$ $X\in\R^{n_1 \times n_2}$. Then $$\S_{\gamma}(F)(Y)=\S_{\gamma}(f)(\sigma(Y)).$$
\end{proposition}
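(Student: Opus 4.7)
The plan is to unfold both sides of the claimed identity using the definition of the $\S_\gamma$-transform, reduce each to a supremum of the form $\sup_{(\cdot)} [\text{linear} - f - \frac{\gamma}{2}\|\cdot\|^2]$, and show the two suprema agree. The bridge between matrices and singular-value vectors will be von Neumann's trace inequality, and the sign/permutation invariance of $f$ is exactly what is needed to restrict the vector supremum to the admissible cone of ordered nonnegative singular-value vectors.

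First I would expand the left-hand side. By definition,
\begin{equation*}
\S_{\gamma}(F)(Y) = \sup_{X \in \mathbb{R}^{n_1\times n_2}} \left[\gamma\langle Y, X\rangle_F - F(X) - \tfrac{\gamma}{2}\|X\|_F^2\right] - \tfrac{\gamma}{2}\|Y\|_F^2.
\end{equation*}
Since $F(X)=f(\sigma(X))$ and $\|X\|_F^2=\|\sigma(X)\|^2$ depend only on singular values, I would parametrize the supremum by $s := \sigma(X) \in \mathbb{R}^d_+$ (ordered decreasingly, say) and maximize over all $X$ with that prescribed singular spectrum. By von Neumann's trace inequality, $\langle Y, X\rangle_F \le \langle \sigma(Y), \sigma(X)\rangle$ with equality when $X$ and $Y$ admit a simultaneous SVD (same left and right singular vectors, matched ordering). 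Hence the inner maximum equals $\gamma\langle \sigma(Y), s\rangle$, reducing the left-hand side to
\begin{equation*}
\sup_{s \in \mathbb{R}^d_+,\, s\text{ decreasing}} \left[\gamma\langle \sigma(Y), s\rangle - f(s) - \tfrac{\gamma}{2}\|s\|^2\right] - \tfrac{\gamma}{2}\|\sigma(Y)\|^2.
\end{equation*}

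Next I would expand the right-hand side,
\begin{equation*}
\S_\gamma(f)(\sigma(Y)) = \sup_{s \in \mathbb{R}^d} \left[\gamma\langle \sigma(Y), s\rangle - f(s) - \tfrac{\gamma}{2}\|s\|^2\right] - \tfrac{\gamma}{2}\|\sigma(Y)\|^2,
\end{equation*}
and argue the supremum may be restricted to the same cone as above. Since $\sigma(Y)\ge 0$ and $f$ is sign invariant, replacing $s$ by $|s|$ (componentwise) only increases the linear term while leaving $f(s)$ and $\|s\|^2$ unchanged, so the sup is attained on $\mathbb{R}^d_+$. Permutation invariance of $f$ combined with the rearrangement inequality applied to the inner product $\langle \sigma(Y), s\rangle$ then lets us restrict further to decreasingly ordered $s$. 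Thus both sides reduce to the identical expression, which gives the claim.

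The main obstacle is being precise about the equality case in von Neumann's inequality and making sure the attainable set $\{\sigma(X):X\in\mathbb{R}^{n_1\times n_2}\}$ really equals $\mathbb{R}^d_+$ (trivial, but one should say it) so that no values of $s$ are excluded when passing from matrices to singular-value vectors. A secondary technical point is handling the case $f(s)=+\infty$ and verifying that taking the supremum over a possibly unbounded set is justified; this follows from the coercivity contributed by $-\tfrac{\gamma}{2}\|s\|^2$ (assuming $f\ge 0$, as stated) and from lower semicontinuity of $f$, which ensures the suprema are well defined. Everything else is bookkeeping.
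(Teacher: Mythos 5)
Your proof is correct and follows essentially the same route as the paper's: both reduce the matrix supremum to a supremum over singular-value vectors via von Neumann's inequality (the paper writes the objective in the completed-square form $-f(\sigma(X))-\tfrac{\gamma}{2}\|X-Y\|_F^2$, which is equivalent to your linear-minus-quadratic form), and both then use sign and permutation invariance of $f$ to identify the restricted supremum over ordered nonnegative vectors with the unrestricted one defining $\S_\gamma(f)(\sigma(Y))$. Your justification of that last step via monotonicity of the linear term and the rearrangement inequality is slightly more explicit than the paper's one-line remark, but it is the same argument.
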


\begin{proof}
	From \eqref{eq:Stransfdef} it can be seen (by completing squares) that
	\begin{equation}
	\begin{split}
	\S_{\gamma}(F)( Y) & =
	\sup_X -f(\sigma(X))-\frac{\gamma}{2}\fro{X-Y}_F.
	\end{split}
	\end{equation}
	Von Neumann's inequality (as stated e.g.~in \cite{carlsson2016convexification}) implies that the supremum is attained for an $X$ that shares singular vectors with $Y$. Hence
	\begin{equation}
	\S_{\gamma}(F)( Y)  =
	\left(\sup_{\nu_1\geq \nu_2 \geq \ldots } - f(\nu)-\frac{\gamma}{2}\fro{\nu-\sigma(Y)}\right),
	\label{eq:S1}
	\end{equation}
	Due to the permutation and sign invariance of $f$, we can drop the restrictions on $\nu$ which turns \eqref{eq:S1} into $\S_{\gamma}(f)(\sigma(Y))$.\qed
\end{proof}
For the case $F(X)=\mu\rank(X)_+\lambda\|X\|_*$ we get the expected result
\begin{equation}
\mathcal{S}_2^2 (\mu\rank( \cdot ) + \lambda \| \cdot \|_{1})(X)= \sum_i \mu - (\sqrt{\mu}-\sigma_i(X))_+^2 + \lambda\|X\|_*.
\end{equation}

\section{Minimizers of \eqref{eq:localS}}\label{app:B}
In this section we compute the minimizers of the one dimensional problem
\begin{equation}
\min_{x_i} \mu - (\max(\sqrt{\mu}-|x_i|,0))^2 +\lambda |x_i| + (x_i-z'_i)^2.
\end{equation} 
Because of symmetry of $(\max(\sqrt{\mu}-|x_i|,0))^2 +\lambda |x_i|$ it is clear that the optimal $x_i \geq 0$ if $z_i \geq 0$.  We therefore first consider 
\begin{equation}
\mu - (\max(\sqrt{\mu}-x_i,0))^2 +\lambda x_i + (x_i-z_i)^2,
\end{equation}
on $x_i \geq 0$. It is clear that this function is differentiable on $x_i > 0$ and goes to infinity when $x_i \rightarrow \infty$. Therefore the optimizer is either in a stationary point in $x_i > 0$ or in $x_i=0$.
Dividing into the two cases $0 < x_i < \sqrt{\mu}$ and $x_i \geq \sqrt{\mu}$ and differentiating shows that:
\begin{itemize}
	\item There are no stationary points if $z_i < \sqrt{\mu}+\frac{\lambda}{2}$. Thus the minimum is in $x_i = 0$.
	\item All points in $(0,\sqrt{\mu}]$ are stationary and have objective value $z^2_i$ if $z_i = \sqrt{\mu}+\frac{\lambda}{2}$. Since $x_i=0$ gives the same objective value
	any $x_i \in [0,\sqrt{\mu}]$ will be optimal.
	\item If $z_i > \sqrt{\mu}+\frac{\lambda}{2}$ then the point $x_i = z_i-\frac{\lambda}{2}$ is stationary with objective value
	\begin{equation}
	\mu+\lambda z_i - \frac{\lambda^2}{4} < \left(\sqrt{\mu}+\frac{\lambda}{2}\right)^2 < z^2_i.
	\end{equation}
	Since $x_i=0$ gives the objective $z_i^2$ the optimizer is $x_i = z_i-\frac{\lambda}{2}$.
\end{itemize}
Because of symmetry we now get that the minimizers of \eqref{eq:localS} is given by
This resulting minimizer is given by
\begin{equation}
x^*_i \in 
\begin{cases}
\{z_i -\sign(z_i)\frac{\lambda}{2}\} & |z_i| > \sqrt{\mu}+\frac{\lambda}{2} \\
[0, \sqrt{\mu}]\sign(z_i) & |z_i| = \sqrt{\mu}+\frac{\lambda}{2} \\
\{0\} & |z_i| < \sqrt{\mu}+\frac{\lambda}{2}
\end{cases}.
\end{equation}

\section{Proof of Lemma~\ref{lemma:subgradbnd}}\label{app:C}
\begin{proof}
	We first consider the scalar case $2z' \in \partial g_\lambda(x')$ where $z' \geq 0$.
	If $z' > \frac{\sqrt{\mu}}{1-\delta_K} + \frac{\lambda}{2}$ then in view of \eqref{eq:gsubdiff} we have $x'=z'-\frac{\lambda}{2}>\frac{\sqrt{\mu}}{1-\delta_K}$. 
	Now consider the linear function
	\begin{equation}
	l(x) = \delta_K(x-x')+z' = \delta_K x+(1-\delta_K) x'+\frac{\lambda}{2}.
	\end{equation}
	Since $l(x') = z'$ and $l(0) = (1-\delta_K) x'+\frac{\lambda}{2} > \sqrt{\mu}+\frac{\lambda}{2}$ it is clear from Figure~\ref{fig:gfun} that $l(x'') > z''$ for all $x''<x'$. Therefore 
	\begin{equation}
	z''-z' > l(x'')-z' = \delta_K(x''-x'),
	\end{equation}
	for all $x'' < x'$. Additionally, for $x'' > x'$ we clearly have
	\begin{equation}
	z''-z' = x''-x' > \delta_K(x''-x').
	\end{equation}
	Now assume that $2z' \in \partial g_\lambda(x')$ where $0\leq z' \leq (1-\delta_K) \sqrt{\mu} + \frac{\lambda}{2}$. Then because of \eqref{eq:gsubdiff} $x'=0$. We let $l(x) = \delta_K x+z'$. Since $l(0) = z' < \sqrt{\mu} + \frac{\lambda}{2}$ and 
	\begin{equation}
	l(\sqrt{\mu}) = \delta_K\sqrt{\mu}+z' <  \delta_K\sqrt{\mu}+(1-\delta_K) \sqrt{\mu} + \frac{\lambda}{2}
	\end{equation}
	it is clear that $l(x'') < z''$ for all $x'' > 0$. Therefore
	\begin{equation}
	z''-z' > l(x'')-z' = \delta_K x'' = \delta_K(x''-x').
	\end{equation}
	Similarly, it is easy to see that $l(x'') > z''$ if $x'' < 0$ and therefore
	\begin{equation}
	z'-z'' > z'-l(x'') = -\delta_K x'' = \delta_K (x'-x''),
	\end{equation}
	which shows that
	\begin{equation}
	(z''-z')(x''-x') > \delta_K (x''-x')^2.
	\end{equation}
	Because of symmetry of $\partial g_\lambda$ we conclude that the same holds if $z < 0$. To obtain \eqref{eq:subdiffest} we now sum over the non-zero entries of $\v$.
\end{proof}

We conclude this section by noting that the proof of the corresponding lemma for the matrix case is somewhat more complicated since the U and V matrices of the singular value decomposition of $X$ have to be accounted for. 
This can be done by combining the above result with Lemma 4.1 of \cite{olsson-etal-iccv-2017}.

\section{Proof of Proposition~\ref{prop:prox}}\label{app:D}

\begin{proof} It is enough to compute the proximal operator of the function \( \mathcal{S}^2 _\gamma (f) (\cdot) + \lambda \| \cdot \|_1  \). Wlog we assume that $\y\in \R_+^d$. With the same notation as in the proof of Theorem~\ref{thm:convenv} we have
	\begin{equation} \begin{split} & \quad \ \prox_{( \Q_\gamma (f)  + \lambda \| \cdot \|_1)/\rho} (\y)    \\
	& = \argmin_{\x \in \mathbb{R}^d} \frac{ \Q_\gamma (f) (\x)}{\rho} + \frac{\lambda}{\rho} \|\x\|_1 +  \| \x - \y \| ^2
	\\ & = \argmin_{\x \in \mathbb{R}^d _+}  \frac{ \Q_\gamma (f) (\x)}{\rho} +  \|\x\| ^2 - 2\langle \chi_{S^c} \x , \y - \frac{\lambda}{2\rho} \mathbf{1} \rangle + \|\y\| ^2
	\\ &= \argmin_{\x \in \mathbb{R}^d _+}  \frac{ \Q_\gamma (f) (\x)}{\rho} + \|\x\|^2 - 2\langle  \x , (\y - \frac{\lambda}{2\rho} \mathbf{1})_+ \rangle +  \|\y\|^2
	\end{split}   \end{equation}
	Since $\|\x\| ^2 - 2 \langle  \x , (\y - \frac{\lambda}{2\rho} \mathbf{1})_+ \rangle = \|\x-(\y - \frac{\lambda}{2\rho} \mathbf{1})_+ \|^2 -\|(\y - \frac{\lambda}{2\rho} \mathbf{1})_+ \|^2 $ and $\y$ is constant in the minimization of $\x$ we see that $\x$ also solves
	\begin{equation}
	\argmin_{\x \in \mathbb{R}^d _+}  \frac{ \Q_\gamma (f) (\x)}{\rho} + \|\x-(\y - \frac{\lambda}{2\rho} \mathbf{1})_+ \|^2.
	\end{equation}
	Note that $(\y - \frac{\lambda}{2\rho} \mathbf{1})_+ = \prox_{\lambda \|\cdot \|_1 / \rho}  (\y)$ since $\y \in \R_+^d$.
	Also, since the elements of $(\y - \frac{\lambda}{2\rho} \mathbf{1})_+$ are non-negative it is clear that minimizing over $\x \in \R^d$ instead of $\R^d_+$ does not change the optimizer and therefore
	\begin{equation}  \begin{split} & \quad \ \prox_{( \Q_\gamma (f)  + \lambda \| \cdot \|_1)/\rho} (\y)  = \prox_{ \Q_\gamma (f)/  \rho} (( \y -\frac{\lambda}{2\rho} \mathbf{1} )_+).    \end{split}  \end{equation}
\end{proof}

\end{document}